\newcommand{\N}{\ensuremath{\mathbb{N}}}
\newcommand{\Z}{\ensuremath{\mathbb{Z}}}
\newcommand{\Vcal}{\ensuremath{\mathcal{V}}}
\newcommand{\Wcal}{\ensuremath{\mathcal{W}}}
\newcommand{\Gcal}{\ensuremath{\mathcal{G}}}
\newcommand{\G}{\Gamma}
\newcommand{\free}{\ast}
\newcommand{\inv}{\ensuremath{^{-1}}}
\newcommand{\sub}{\subseteq}
\newcommand{\sm}{\smallsetminus}
\newcommand{\id}{\ensuremath{\mathrm{id}}}
\newcommand{\fwd}{\overset{\scriptscriptstyle\rightarrow}}
\newcommand{\bwd}{\overset{\scriptscriptstyle\leftarrow}}
\theoremstyle{definition}
\theoremstyle{plain}
\newtheorem{theorem}{Theorem}[section]
\newtheorem{thm}[theorem]{Theorem}
\newtheorem{conj}[theorem]{Conjecture}
\newtheorem{coro}[theorem]{Corollary}
\newtheorem{lemma}[theorem]{Lemma}
\newtheorem{prop}[theorem]{Proposition}
\theoremstyle{remark}
\newtheorem{remark}[theorem]{Remark}
\theoremstyle{remark}
\def\td{tree-decom\-po\-si\-tion}
\def\ta{tree amalgamation}
\def\qt{quasi-tran\-si\-tive}
\def\qg{quasi-geo\-de\-sic}
\newcommand{\comment}[1]{}
\begin{document}

\title{A Stallings' type theorem\\for \qt\ graphs}
\author{Matthias Hamann\thanks{Supported by the European Research Council under the European Union's Seventh Framework Programme (FP7/2007-2013) / ERC grant agreement n$^\circ$ 617747 and through the Heisenberg-Programme of the Deutsche Forschungsgemeinschaft (DFG Grant HA 8257/1-1).} \\
	Alfr\'{e}d R\'{e}nyi Institute of Mathematics \\ Budapest, Hungary
	\and 
	Florian Lehner \thanks{Supported by the Austrian Science Fund (FWF), grant J 3850-N32}\\
	Mathematics Institute, University of Warwick \\ Coventry, UK
	\and
	Babak Miraftab \\
	Department of Mathematics, University of Hamburg \\ Hamburg, Germany
	\and
	Tim R\"uhmann \\
	Department of Mathematics, University of Hamburg \\ Hamburg, Germany
	}

\date{\today}

\maketitle
\begin{abstract}
We consider infinite connected \qt\ locally finite graphs and show that every such graph with more than one end is a \ta\ of two other such graphs.
This can be seen as a graph-theoretical version of Stallings' splitting theorem for multi-ended finitely generated groups and indeed it implies this theorem.
It will also lead to a characterisation of accessible graphs.
We obtain applications of our results for hyperbolic graphs, planar graphs and graphs without any thick end.
The application for planar graphs answers a question of Mohar in the affirmative.
\end{abstract}

\section{Introduction}
\label{Intro}
Stallings~\cite{Stallings} proved  that finitely generated groups with more than one end are either a free product with amalgamation over a finite subgroup or an HNN-extension over a finite subgroup.
The main aim of this paper is to obtain an analogue of Stallings' theorem for quasi-transitive graphs.
The obvious obstacle for this is that free products with amalgamations and HNN-extensions are group theoretical concepts.
So in order to obtain a graph-theoretical analogue, we first need to  find a graph-theoretical analogue of free products with amalgamations and HNN-extensions.
The proposed notation by Mohar~\cite{Mohar06} are \ta s and indeed we will prove the following theorem.
(We refer to Section~\ref{Sec_Splitting} for the definition of \ta s.)

\begin{thm}\label{stallingsgraphIntro}
Every connected \qt\ locally finite graph with more than one end is a non-trivial tree amalgamation of finite adhesion of two connected \qt\ locally finite graphs.
\end{thm}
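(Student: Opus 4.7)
The plan is to adapt the classical strategy behind Stallings' theorem: build a structure tree out of finite cuts in $G$, read off a splitting from it, and translate this splitting into a tree amalgamation in the sense of Mohar. Since $G$ has more than one end and is locally finite, it admits a finite cut $(A,B)$ with both $A$ and $B$ containing a ray from some end. Taking the $\Aut(G)$-orbit of $(A,B)$ and uncrossing in the style of Dunwoody's construction would produce a nested, $\Aut(G)$-invariant system $\mathcal{F}$ of finite cuts which, by quasi-transitivity of $G$, has only finitely many $\Aut(G)$-orbits. The graph-theoretic analogue of the Dunwoody--Dicks structure tree then associates to $\mathcal{F}$ a tree $T=T(\mathcal{F})$ on which $\Aut(G)$ acts with finitely many orbits on both vertices and edges, whose vertices correspond to certain pieces $V_t\subseteq V(G)$ and whose edges correspond to the cuts in $\mathcal{F}$.

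Next I would pick an edge $e\in E(T)$ and remove the orbit $\Aut(G)\cdot e$ from $T$. By a Bass--Serre-type dichotomy, the resulting forest has either one or two $\Aut(G)$-orbits of components, according to whether some automorphism swaps the two sides of $e$ or not. Either way, choose representative components $T_1, T_2$ (possibly equal in the inversion case) and let $G_i$ be the subgraph of $G$ spanned by the pieces $V_t$ for $t\in V(T_i)$ together with the relevant finite cut. Local finiteness is inherited from $G$, and quasi-transitivity of each $G_i$ follows because the setwise stabiliser of $T_i$ still acts on $G_i$ with finitely many orbits. Finally, because $T$ is a tree with a single $\Aut(G)$-orbit of removed edges, one verifies that gluing copies of $G_1$ and $G_2$ along the adhesion sets according to $T$ recovers $G$, which is exactly the defining property of a \ta; finite adhesion is guaranteed because $\mathcal{F}$ consists of finite cuts, and non-triviality is forced by the fact that $(A,B)$ separated two ends to begin with.

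The main obstacle I anticipate lies in the production and manipulation of the nested, $\Aut(G)$-invariant system $\mathcal{F}$ of cuts in the purely graph-theoretic setting, where there is no ambient group acting freely. In Stallings' original proof one works with a finitely generated group acting freely on its Cayley graph, and Dunwoody's uncrossing argument is well developed there. For merely \qt\ graphs, extra care is needed because \emph{a priori} uncrossing a single $\Aut(G)$-orbit of cuts may produce infinitely many new cuts; showing that one can nevertheless obtain a nested system with finitely many orbits, while remaining $\Aut(G)$-invariant and still separating the relevant ends, is the technical heart of the argument. A secondary subtlety is to verify that the abstract tree amalgamation constructed from $T$ really reproduces $G$ on the nose, rather than merely a graph quasi-isometric to it, and to handle the inversion case so that the splitting genuinely presents $G$ as an amalgamation of \emph{two} (possibly isomorphic) \qt\ graphs.
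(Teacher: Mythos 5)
Your plan follows the classical route to Stallings' theorem — build a structure tree from a nested $\Aut(G)$-invariant family of finite \emph{edge} cuts à la Dunwoody, then read a splitting off the tree action — whereas the paper deliberately avoids edge cuts and structure trees. The authors instead invoke the canonical tree-decompositions of Carmesin, Hamann and Miraftab (Theorem~\ref{thm_canonicalTD}), which use \emph{vertex} separators and come packaged as genuine tree-decompositions with parts $V_t$ satisfying (T1)--(T3). They then refine to a connected basic tree-decomposition (Propositions~\ref{connected TD}, \ref{nicetree}, Corollary~\ref{nicetreedecomposition}) with exactly one $\Gamma$-orbit of edges, and the passage to a tree amalgamation (Lemma~\ref{TD_implies_TA}) is essentially a rewriting: the adhesion sets of the tree-decomposition \emph{are} the adhesion sets of the tree amalgamation. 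As the introduction says explicitly, this is precisely why tree-decompositions were chosen over structure trees for this theorem. So your approach is genuinely different, not a variant of theirs.

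There is, however, a concrete gap in your proposal that the paper's route is designed to sidestep. You assert that the structure tree $T(\mathcal F)$ has vertices "corresponding to certain pieces $V_t\subseteq V(G)$" which can then be glued along cuts, but this is not automatic. In the Dunwoody/Dicks--Dunwoody structure tree, nodes are equivalence classes of cut-sides (orientations of the cut system); there is a map $V(G)\to V(T)$ but its fibres are the \emph{open} blocks, which do not overlap on the cut vertices and hence do not form a tree-decomposition, and some structure tree nodes can even have empty preimage. To use such a tree for a tree amalgamation you would need to enlarge each fibre by the vertex boundaries of the incident cuts, verify that the resulting family satisfies (T3), and then check that the glueing along the enlarged adhesion sets is consistent across all tree edges. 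This is exactly the kind of translation the paper avoids by working with tree-decompositions from the start, where the consistency condition (T3) is built in. By contrast, the technical worry you single out — that uncrossing a single orbit of cuts might produce infinitely many new orbits — is actually already handled in the literature: Dunwoody's original uncrossing argument and Thomassen--Woess (their Proposition~4.2) give finiteness for locally finite quasi-transitive graphs, so that part of your plan is on firmer ground than you suggest.

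Two smaller points worth flagging. First, your claim that "quasi-transitivity of each $G_i$ follows because the setwise stabiliser of $T_i$ still acts on $G_i$ with finitely many orbits" conceals a real argument: the paper's Proposition~\ref{quasitransitivebags} has to separately handle orbit representatives inside versus outside adhesion sets, because an automorphism moving a boundary vertex need not stabilise the part. Second, your "Bass--Serre-type dichotomy" between one and two orbits of components is the Type~1 / Type~2 distinction in the paper (Lemma~\ref{TD_implies_TA}); you correctly identify it, but note that the paper also has to pass to an index-$2$ subgroup when some automorphism inverts an edge of $T$, to make the bipartition well-defined for the semiregular connecting tree — a detail your sketch elides.
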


On the other side, we can ask if we start with finite or one-ended connected \qt\ locally finite graphs and do iterated \ta s of finite adhesion, what class of graphs do we end up with?
In the case of finitely generated groups, the answer is the class of accessible groups (by definition).
Thomassen and Woess~\cite{ThomassenWoess} defined edge-accessibility for graphs\footnote{They call it accessible instead of edge-accessible, but we will reserve the notion of accessibility to a direct translation of accessibility of groups, see Section~\ref{Sec_Accessible}.}: a \qt\ locally finite graph is \emph{edge-accessible} if there is some $n\in\N$ such that every two ends can be separated by at most $n$ edges.
They showed in~\cite{ThomassenWoess} that a finitely generated group is accessible if and only if each of its locally finite Cayley graphs is edge-accessible.
We will show that \ta s and edge-accessibility fit well together in that we prove that the above described class of graphs we obtain is the class of edge-accessible connected \qt\ locally finite graphs.

In 1988, Mohar~\cite{Mohar06} asked whether \ta s are powerful enough to yield a classification of infinitely-ended transitive planar graphs in terms of finite and one-ended infinite planar transitive
graphs.
Our theorems enable us to answer his question in the affirmative for \qt\ graphs because Dunwoody~\cite{D-PlanarGraphsAndCovers} proved that they are edge-accessible, see Section~\ref{Sec_planar}.

Additionally, we obtain as a corollary Stallings' theorem, see Section~\ref{Sec_Stallings}, and a new characterisation of \qt\ locally finite graphs that are quasi-isometric to trees, see Section~\ref{Sec_thin}.
In Section~\ref{Sec_hyperbolic} we apply our theorems to hyperbolic graphs and show that a \qt\ locally finite graph is hyperbolic if and only if it is obtained by iterated \ta s starting with finite or one-ended hyperbolic \qt\ locally finite graphs.

Our main tool to prove Theorem~\ref{stallingsgraphIntro} are canonical \td s.
While some proofs of Stallings' theorem are build on edge separators and their structure trees, see e.\,g.\ Dunwoody~\cite{CuttingUpGraphs}, it turns out that \td s and vertex separators fit well together with \ta s.
Due to the similar natures of structure trees and \td s, it is not surprising that some results that we prove here (Propositions~\ref{BagDegree} and~\ref{capture thick ends_new}) have also been proved for structure trees, see e.\,g.\ Thomassen and Woess~\cite{ThomassenWoess} and M\"oller~\cite{RoggiEndsI, RoggiEndsII}.

\section{Preliminaries}
\label{prelim}
We follow the general notations of \cite{DiestelBook10noEE} unless stated otherwise. 
In the following we will  state the most important definitions for convenience. 

Let~$G=(V(G),E(G))$ be a graph.
A \emph{geodesic} is a shortest path between two vertices.
A \emph{ray} is a one-way infinite path, the infinite subpaths of a ray are its \emph{tails}.
Two rays are \emph{equivalent} if there exists no finite vertex set separating them eventually, i.\,e. two rays are equivalent if they have tails contained in the same component of $G-S$ for every finite set~$S$ of vertices.
The equivalence classes of rays in a graph are its \emph{ends}.
The \emph{degree} of an end is the maximum number of disjoint rays in that end, if it exists.  
If that maximum does not exist, we say that this end has \emph{infinite degree} and call it \emph{thick}.
An end with finite degree is called \emph{thin}. 
An end~$\omega$ is \emph{captured} by a set $X$ of vertices if every ray of~$\omega$ has infinite intersection with~$X$ and it \emph{lives} in~$X$ if every ray of~$\omega$ has a tail in~$X$.

Let $X\sub V(G)$.
Let $G'$ be the graph with vertex set $(V(G)\sm X)\cup\{v_X\}$, where $v_X$ is a new vertex, and edges between $u,v\in V(G)\sm X$ if and only if $uv\in E(G)$ and $v_x$ is adjacent to precisely those vertices $y\in V(G)\sm X$ that have a neighbour in~$X$.
We call $G'$ the \emph{contraction} of~$X$ in~$G$ and we say that we \emph{contracted}~$X$.
Since edges are just vertex sets of size~$2$, the definition carries over to edges.

Let $\G$ be a group acting on~$G$ and let $X\sub V(G)$.
The (\emph{setwise}) \emph{stabilizer} of~$X$ with respect to~$\Gamma$ is the set  
\[
\Gamma_X := \{g\in \G\mid g(x)\in X\text{ for all } x\in X \}.
\]
An \emph{orbit} of~$\G$ (or a \emph{$\G$-orbit}) is a set $\{g(x)\mid g\in\G\}$ for some $x\in V(G)$.
We say $\G$ acts \emph{transitively} on~$G$ if $V(G)$ is one $\Gamma$-orbit and $\G$ acts \emph{\qt ly} on~$G$ if $V(G)$ consists of finitely many $\Gamma$-orbits.


\section{Canonical \td s}\label{TD}

In this section we will look at our main tool for our proofs: \td s.
A \emph{\td} of a graph~$G$ is a pair $(T,\Vcal)$ where $T$ is a tree and $\Vcal=(V_t)_{t \in V(T)}$ is a family of vertex sets of~$G$ such that the following holds: 
\begin{enumerate}[label=(T\arabic*)]
\item $V(G) = \bigcup_{t \in V(T)} V_t$.
\item For every edge $e\in E(G)$ there is a $t\in V(T)$ such that $V_t$ contains both vertices that are incident with~$e$.
\item $V_{t_1}\cap V_{t_2} \subseteq V_{t_3}$ whenever~$t_3$ lies on the $t_1-t_2$ path in~$T$. 
\end{enumerate}
The sets~$V_t$ are called the \emph{parts} of $(T,\Vcal)$ and the vertices of the \emph{decomposition tree}~$T$ are its \emph{nodes}. 
The sets $V_{t_1} \cap V_{t_2}$ with $t_1t_2\in E(T)$ are the \emph{adhesion sets} of the \td.
We say that $(T,\Vcal)$ has \emph{finite adhesion} if all adhesion sets are finite.

\begin{remark}\label{rem_adhSeps}
Let $t_1t_2$ be an edge of the decomposition tree~$T$ of a \td\ $(T,\Vcal)$.
For $i=1,2$, let $T_i$ be the component of $T-t_1t_2$ that contains~$t_i$.
It follows from {\rm (T3)} that $V_{t_1} \cap V_{t_2}$ separates the vertices in $\bigcup_{t\in T_1}V_t$ from those in $\bigcup_{t\in T_2}V_t$.
\end{remark}

We say $(T,\Vcal)$ \emph{distinguishes} two ends $\omega_1$ and~$\omega_2$ if there is a finite adhesion set $V_{t_1} \cap V_{t_2}$ such that one end lives in $\bigcup_{t\in T_1}V_t$ and the other lives in $\bigcup_{t\in T_2}V_t$, where $T_i$ is the maximal subtree of $T-t_1t_2$ containing $t_i$.
It distinguishes them \emph{efficiently} if no vertex set in~$G$ of smaller size than $V_{t_1}\cap V_{t_2}$ separates them.
For $k\in\N$, two ends of~$G$ are \emph{$k$-distinguishable} if there is a set of $k$ vertices of $G$ that separates them.

Let $\Gamma$ be a group acting on~$G$.
If every $\gamma \in \Gamma$ maps parts of $(T,\Vcal)$ to parts and thereby induces an automorphism of~$T$ we say that $(T,\Vcal)$ is \emph{$\Gamma$-invariant}.

The following theorem by Carmesin et al.\ will be the main result we are building on.

\begin{thm}{\rm \cite{InfCanonicalTD}}\label{thm_canonicalTD}
Let $G$ be a locally finite graph, let $\Gamma$ be a group acting on~$G$ and let $k\in \N$.
Then there is a $\Gamma$-invariant \td\ of~$G$ of adhesion at most~$k$ that efficiently distinguishes all $k$-distinguishable ends.\qed
\end{thm}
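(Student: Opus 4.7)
The plan is to construct the desired tree-decomposition from a canonical nested family of low-order vertex separations of~$G$. First I would consider the set $\Scal_k$ of all vertex separations $(A,B)$ of~$G$ with $A\cup B=V(G)$, no edges between $A\sm B$ and $B\sm A$, and $|A\cap B|\le k$; among these, call $(A,B)$ \emph{efficient} if its order $|A\cap B|$ is minimum among all separations distinguishing the same pair of ends. These efficient separations are the raw material for the decomposition, and by definition every $k$-distinguishable pair of ends is separated by at least one of them.

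The central step is to extract from the efficient separations a canonical \emph{nested} subsystem $\Ncal \sub \Scal_k$, i.\,e.\ a family in which any two separations can be oriented so that one side of the first contains one side of the second. The workhorse here is the \emph{uncrossing} argument: if two separations of orders $p,q\le k$ cross, then submodularity of the order function guarantees that two of their four corner separations have orders at most $p$ and~$q$ respectively, so the crossing pair can be replaced by a nested pair without increasing the orders. The main obstacle, and the real content of the theorem, is performing all these replacements in a $\Gamma$-equivariant way while making sure that every $k$-distinguishable pair of ends remains separated by some member of~$\Ncal$. I would handle this by associating to each end~$\omega$ a canonical orientation (a \emph{profile}) of $\Scal_k$ pointing towards~$\omega$, and then letting $\Ncal$ consist of those efficient separations that are extremal with respect to a natural partial order compatible with all these profiles. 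Since the choice of extremal separations uses only data invariant under~$\Gamma$, the family $\Ncal$ is automatically $\Gamma$-invariant, and the profile-compatibility guarantees that no distinguishing pair is destroyed in the uncrossing.

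Once $\Ncal$ is in place, the final step is the now-standard translation from a nested separation system to a tree-decomposition: the nodes of the decomposition tree~$T$ correspond to the consistent orientations of~$\Ncal$, and the part $V_t$ at a node~$t$ is the set of vertices lying on the near side of every element of~$\Ncal$ oriented towards~$t$. The adhesion sets are then exactly the intersections $A\cap B$ with $(A,B)\in\Ncal$, so the adhesion is at most~$k$; two ends are distinguished by $(T,\Vcal)$ if and only if they were separated by some $(A,B)\in\Ncal$, which by construction covers every $k$-distinguishable pair, and efficiency is inherited because each $(A,B)\in\Ncal$ was an efficient separation to begin with; finally, the $\Gamma$-action on $\Ncal$ descends to a $\Gamma$-action on~$T$ sending parts to parts, yielding the required $\Gamma$-invariance.
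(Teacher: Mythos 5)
The paper does not contain a proof of this theorem: it is Theorem~\ref{thm_canonicalTD}, quoted from the reference \cite{InfCanonicalTD} (Carmesin et al.) with a \texttt{\textbackslash qed} immediately after the statement, signalling that the proof lives in that external source. So there is no ``paper's own proof'' to compare yours against, and any comparison can only be with the cited work.

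That said, your sketch does reproduce the standard architecture for results of this kind: start from low-order efficient separations, extract a $\Gamma$-invariant nested subfamily via uncrossing/submodularity guided by end-profiles, and then apply the nested-system-to-\td\ translation. This is indeed the shape of the argument in the canonical-\td\ literature. Two points deserve more care before the sketch could be called a proof. First, ``efficiency is inherited because each $(A,B)\in\Ncal$ was efficient'' conflates efficiency of an individual separation with efficiency of the decomposition: after uncrossing and thinning you must still argue that \emph{every} $k$-distinguishable pair of ends has, among the surviving separations, one of \emph{minimum} order separating it --- uncrossing can a priori replace a low-order separator for one pair by a higher-order one, so the profile-compatibility has to be set up to prevent exactly this, and that needs an explicit argument. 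Second, the step from a nested family to a \td\ is routine for finite graphs, but in the infinite, locally finite setting you have to verify that the resulting tree is well-defined (consistent orientations form a tree, not merely a graph-like structure), that parts satisfy (T1)--(T3), and that the $\Gamma$-action on separations genuinely descends to an action on the tree respecting parts; local finiteness is used here and you invoke it only implicitly. Neither gap is fatal --- they are precisely the technical content of \cite{InfCanonicalTD} --- but a self-contained proof would need to close them.
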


\section{Basic \td s}

The aim of this section is first to modify the \td\ of Theorem~\ref{thm_canonicalTD} and then to prove some properties of the newly obtained \td, in particular, where the \td\ captures the ends of the graph.
Our first step in modifying the \td\ of Theorem~\ref{thm_canonicalTD} will be to make all adhesion sets connected while keeping the action of~$\Gamma$ on~$(T,\Vcal)$.

\begin{prop}\label{connected TD}
	Let~$\G$ be a group acting on a locally finite graph~$G$ and let $(T,\Vcal):=(T,(V_t)_{t\in V(T)})$ be a $\Gamma$-invariant \td\ of~$G$ of finite adhesion. 
	Then there is a $\Gamma$-invariant \td\ $(T,\Vcal'):=(T,(V'_t)_{t\in V(T)})$ of~$G$ such that every adhesion set of $(T,\Vcal')$ is finite and connected and such that $V_t\sub V'_t$ for every $t\in V(T)$.
\end{prop}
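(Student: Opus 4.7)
The plan is to enlarge each part $V_t$ by canonically chosen extra vertices of $G$ that connect every adhesion set of $(T,\mathcal{V})$, while keeping the decomposition tree~$T$ and the $\Gamma$-action intact. For each edge $e=t_1t_2$ of $T$, write $S_e := V_{t_1}\cap V_{t_2}$, and let $H_e$ denote the set of vertices of $G$ that lie on some shortest $u$-$v$ path in $G$ with $u,v\in S_e$. Since $G$ is connected and locally finite and $S_e$ is finite, $H_e$ is finite; by construction it is connected in $G$ and contains $S_e$. Crucially, $H_e$ depends only on $S_e$, so $\gamma(H_e)=H_{\gamma(e)}$ for every $\gamma\in\Gamma$.

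For each $v\in V(G)$ let $T(v) := \{t\in V(T) : v\in V_t\}$, which is a subtree of $T$ by (T3), and let $T'(v)$ be the convex hull in $T$ of $T(v)\cup\bigcup\{\{t'_1,t'_2\} : v\in H_{t'_1 t'_2}\}$. Define $V'_t := \{v\in V(G) : t\in T'(v)\}$. Since $T'(v)$ is a subtree of $T$ containing $T(v)$, the pair $(T,\mathcal{V}')$ is a tree-decomposition of $G$ with $V_t\subseteq V'_t$ for every $t$, and $\Gamma$-invariance follows from $\gamma(T'(v))=T'(\gamma(v))$, which is immediate from the canonicity of $H_e$.

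The heart of the argument, and what I expect to be the main obstacle, is to show that the new adhesion sets are both finite and connected. I plan to do this by establishing the stronger identity $V'_{t_1}\cap V'_{t_2}=H_e$ for every edge $e=t_1t_2$ of $T$; the inclusion $\supseteq$ holds by construction. For $\subseteq$, let $v\in V'_{t_1}\cap V'_{t_2}$ and write $T_1,T_2$ for the components of $T-e$, and $V_i := \bigcup_{t\in T_i}V_t$; by Remark~\ref{rem_adhSeps}, $S_e$ separates $V_1\setminus V_2$ from $V_2\setminus V_1$ in $G$. The generating set of $T'(v)$ must meet both $T_1$ and $T_2$, so either $T(v)$ already does so (giving $v\in V_{t_1}\cap V_{t_2}=S_e\subseteq H_e$) or, WLOG, $T(v)\subseteq T_1$ while $v\in H_{e'}$ for some edge $e'$ with an endpoint in $T_2$. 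In the latter case either $e'=e$, and $v\in H_e$ at once, or both endpoints of $e'$ lie in $T_2$; then $v\in V_1\setminus V_2$ lies on a shortest $u_1$-$u_2$ path in $G$ with $u_1,u_2\in S_{e'}\subseteq V_2$, and by the separation property this path enters and leaves $V_1$ through vertices $w,w'\in S_e$. The sub-path from $w$ through $v$ to $w'$ is itself a shortest $w$-$w'$ path in $G$, whence $v\in H_e$. Finiteness and connectedness of the adhesion sets then follow from the corresponding properties of $H_e$.
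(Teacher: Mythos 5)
Your proof is correct, and it takes a genuinely different (if closely related) route from the paper's. The common core is the set $H_e$ of vertices lying on some geodesic between two vertices of the old adhesion set $S_e$ — this is exactly the set the paper calls $\bigcup_{u,v\in S_e}V_{uv}$. The paper then defines $V'_t$ directly as $V_t$ together with the sets $H_e$ for the edges $e$ incident to $t$, and verifies (T3) by a three-case analysis tracking how a geodesic with both ends in one part must cross the intermediate old adhesion sets. You instead define $V'_t$ indirectly: for each vertex $v$ you take the convex hull $T'(v)$ in $T$ of $T(v)$ together with the endpoints of every edge $e$ with $v\in H_e$, and set $V'_t=\{v : t\in T'(v)\}$. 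This makes (T1)--(T3) entirely automatic (each $T'(v)$ is a nonempty subtree containing $T(v)$, and $\Gamma$-equivariance of $T'(\cdot)$ follows from equivariance of $H_{(\cdot)}$), so the whole burden shifts to the single clean identity $V'_{t_1}\cap V'_{t_2}=H_e$, which you prove correctly using the separation property of $S_e$ and the fact that subpaths of geodesics are geodesics. Two remarks worth noting: (a) your parts $V'_t$ can be strictly larger than the paper's (you put $v$ into every part along the $T$-path from $T(v)$ to the edges generating $T'(v)$, not just the parts adjacent to those edges), but the adhesion sets come out the same, namely $H_e$; (b) your identity $V'_{t_1}\cap V'_{t_2}=H_e$ is actually a sharper and more transparent justification of connectedness and finiteness of the new adhesion sets than the paper's brief ``by construction'' — in the paper this fact is only implicit in the (T3) verification specialised to adjacent nodes. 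Like the paper's, your argument tacitly uses that $G$ is connected (so that geodesics between vertices of $S_e$ exist); this is the standing hypothesis everywhere the proposition is applied, so it is not a real gap.
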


\begin{proof}
	Let~$u$ and~$v$ be two vertices of an adhesion set of $(T,\Vcal)$.
	Assume that~$\mathcal{P}_{uv}$ is the set of all geodesics between~$u$ and~$v$ and assume that~$V_{uv}$ is the set of all vertices of~$G$ that lie on the paths of~$\mathcal{P}_{uv}$.
	For a part $V_t$ let $V_t'$ be the union of $V_t$ with all sets $V_{uv}$ where $u$ and $v$ lie in an adhesion set contained in~$V_t$.
	Let $\Vcal':=\{V_t'\mid t\in V(T)\}$.
	We claim that $(T,\Vcal')$ is a \td.
	By construction it has the desired properties, i.\,e.\ every adhesion set is connected and $V_t\sub V_t'$ and, since $G$ is locally finite and since the adhesion sets of $(T,\Vcal)$ are finite, every adhesion set  of $(T,\Vcal')$ is finite.
	Since we made no choices when adding all possible geodesics to the adhesion sets, $\Gamma$ still acts on $(T,\Vcal')$.
	
	As every element of~$\Vcal'$ is a superset of some element of~$\Vcal$, we just have to verify (T3) to see that $(T,\Vcal')$ is a \td.
	To see this, let $x\in V'_{t_1}\cap V'_{t_2}$ for $t_1,t_2\in V(T)$ and let $t_3$ be on the $t_1$-$t_2$ path $s_1,\ldots,s_n$ in~$T$ with $s_1=t_1$ and $s_n=t_2$.
	If $x\in V_{t_1}\cap V_{t_2}$, then we have $x\in V_{t_3}\sub V'_{t_3}$ as $(T,\Vcal)$ is a \td.
	If $x\in (V'_{t_1}\sm V_{t_1})\cap V_{t_2}$, then it lies on a geodesic $P$ between two vertices $x_1,x_2$ of an adhesion set of $(T,\Vcal)$ in~$V_{t_1}$.
	Since every adhesion set $V_{s_i}\cap V_{s_{i+1}}$ separates $V_{s_1}$ from $V_{s_n}$ and since $x\in V_{t_2}$, the path $P$ must pass through $V_{s_i}\cap V_{s_{i+1}}$.
	Thus, either $P$ contains two vertices $u,v$ of $V_{s_i}\cap V_{s_{i+1}}$ such that $x$ lies on the $u$-$v$ subpath $P'$ of~$P$, or $x$ lies in $V_{s_i}\cap V_{s_{i+1}}$.
	In the first case, we added $P'$ to the adhesion set $V_{s_i}\cap V_{s_{i+1}}$ because $P'$ is a geodesic with its end vertices in $V_{s_i}\cap V_{s_{i+1}}$.
	Thus, in both cases $x$ lies in $V_{s_i}\cap V_{s_{i+1}}$ and thus in $V'_{t_3}$.
	If $x\in (V_{t_1}'\sm V_{t_1})\cap(V_{t_2}'\sm V_{t_2})$, let $t_4\in V(T)$ with $x\in V_{t_4}$.
	By the previous case, $x$ lies in~$V_t'$ for every $t$ on the $t_1$-$t_4$ or $t_2$-$t_4$ paths in~$T$.
	Since $T$ is a tree, these cover the path $s_1,\ldots, s_n$ and hence $x\in V_{t_3}'$.
	This proves that $(T,\Vcal')$ is a \td.
\end{proof}

We call a \td\ of a graph~$G$ \emph{connected} if all parts induce connected subgraphs of~$G$.

The step to make the adhesion sets connected is just an intermediate step for us as we aim for connected parts, i.\,e.\ we aim for connected \td s.
The next lemma ensures that in connected graphs all parts are connected if all adhesion sets are connected.

\begin{lemma}\label{nicetree}
	If all adhesion sets of a \td\ of a connected graph are connected, then the \td\ is connected.
\end{lemma}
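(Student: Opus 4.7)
The plan is to proceed by contradiction: assume some part $V_t$ induces a disconnected subgraph of $G$, and derive a contradiction. Let $u, v$ lie in different components of $G[V_t]$. Since $G$ is connected, there exists a $u$--$v$ path in $G$; among all such paths, choose one $P=x_0 x_1 \ldots x_n$ minimising the number of vertices outside $V_t$. By assumption, $P$ must leave $V_t$, so we can pick an index $i$ with $x_i \in V_t$, $x_{i+1}\notin V_t$, and let $x_j$ be the next vertex of $P$ after $x_i$ that lies in $V_t$. Thus $x_{i+1},\ldots,x_{j-1}$ all lie outside~$V_t$.

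The next step is to identify in which adhesion sets $x_i$ and $x_j$ lie. The edge $x_i x_{i+1}$ is contained in some part $V_{r_1}$ with $r_1\neq t$; if $t_1$ is the neighbour of $t$ on the $t$--$r_1$ path in $T$, then $x_i \in V_t\cap V_{r_1}$ forces $x_i\in V_t\cap V_{t_1}$ via (T3). Analogously one obtains $t_2$ with $x_j \in V_t\cap V_{t_2}$ from the edge $x_{j-1}x_j$. I expect the heart of the proof to be showing $t_1=t_2$; this is the main obstacle.

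To prove $t_1=t_2$, suppose otherwise and let $T_{t_1},T_{t_2}$ be the components of $T-tt_1$ and $T-tt_2$ containing $t_1$ and $t_2$ respectively. Since $t_1\neq t_2$, the subtrees $T_{t_1}$ and $T_{t_2}$ are disjoint, and every path in $T$ between them runs through $t$; (T3) then gives $\bigcup_{s\in T_{t_1}}V_s \cap \bigcup_{s\in T_{t_2}}V_s \subseteq V_t$. Because $x_{i+1}\in V_{r_1}$ with $r_1\in T_{t_1}$, and $x_{j-1}\in V_{r_2}$ with $r_2\in T_{t_2}$, the vertex $x_{i+1}$ lies in $\bigcup_{s\in T_{t_1}}V_s\setminus V_t$ while $x_{j-1}$ lies in $\bigcup_{s\notin T_{t_1}}V_s\setminus V_t$. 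But Remark~\ref{rem_adhSeps} tells us that $V_t\cap V_{t_1}$ separates these two sides, and the subpath $x_{i+1}\ldots x_{j-1}$ entirely avoids $V_t$, contradicting this separation. Hence $t_1=t_2$.

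Now both $x_i$ and $x_j$ lie in the adhesion set $V_t\cap V_{t_1}$, which is connected by hypothesis. Therefore there is a path $Q$ in $G$ from $x_i$ to $x_j$ using only vertices of $V_t\cap V_{t_1}\subseteq V_t$. Replacing the detour $x_i,x_{i+1},\ldots,x_j$ by $Q$ yields a $u$--$v$ walk in $G$ whose vertex set is contained in $V(P)\cup V(Q)$; extracting a path from this walk produces a $u$--$v$ path with strictly fewer vertices outside $V_t$ than $P$, contradicting the choice of $P$. This contradiction shows that every part $V_t$ induces a connected subgraph of $G$, so the \td\ is connected.
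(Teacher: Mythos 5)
Your proof is correct and follows essentially the same approach as the paper's: choose a $u$--$v$ path in $G$ minimising vertices outside $V_t$, locate an excursion outside $V_t$, show its endpoints lie in a common adhesion set, and replace the excursion by a path through that (connected) adhesion set to contradict minimality. Where you differ is in level of detail: the paper picks an arbitrary $t'$ with $p_{i+1}\in V_{t'}$ and notes only that the re-entry vertex $p_j$ lies in $V_t\cap V_s$, leaving implicit that $p_i$ lies there too; you instead explicitly derive that $x_i\in V_t\cap V_{t_1}$ and $x_j\in V_t\cap V_{t_2}$ from (T2) applied to the boundary edges, and then prove $t_1=t_2$ via the separation property of Remark~\ref{rem_adhSeps}. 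This extra step is sound and makes the argument more self-contained; in the paper's version the same conclusion follows more quickly if one chooses $t'$ to be a node whose part contains the entire edge $p_i p_{i+1}$, so that (T3) immediately puts $p_i$ into the adhesion set as well.
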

\begin{proof}
Let $G$ be a graph and let $(T,\Vcal)$ be a \td\ of~$G$ all of whose adhesions sets are connected.
	Let $u$ and~$w$ be two vertices of~$V_t$ for some~$t\in V(T)$. 
	Since~$G$ is connected, there is a path~$P=p_1,\ldots, p_n$ with $p_1 =u$ and $p_n = w$. 
	We choose $P$ with as few vertices outside of~$V_t$ as possible.
	Let us suppose that~$P$ leaves~$V_t$.
	Let~$p_i \in V_t$ such that~$p_{i+1} \notin V_t$ and let~$p_{j}$ be the first vertex of~$P$ after~$p_i$ that lies in~$V_t$.
	As $p_n = w \in V_t$ we know that such a vertex always exists.
	Let $t'\in V(T)$ such that $p_{i+1}\in V_{t'}$.
	Then the adhesion set $V_t\cap V_s$, where $s$ is the neighbour of~$t$ on the $t$-$t'$ path in~$T$, separates $V_t$ from~$p_{i+1}$.
	Hence, the definition of a \td\ implies that $p_j$ must lie in~$V_t\cap V_s$, too.
	But then we can replace the subpath of~$P$ between $p_i$ and~$p_j$ by a path in $V_t\cap V_s$.
	The resulting walk contains a path between $u$ and~$w$ with fewer vertices outside of~$V_t$ than~$P$.
	This contradiction shows that all vertices of~$P$ lie in~$V_t$ and hence $G[V_t]$ is connected.
\end{proof}

Most of the time we do not need the full strength of Theorem~\ref{thm_canonicalTD} in that it suffices to consider $\Gamma$-invariant \td s with few $\Gamma$-orbits that still distinguish some ends.

Let~$\G$ be a group acting on a connected locally finite graph~$G$ with at least two ends. 
A $\Gamma$-invariant \td\ $(T,\mathcal{V})$ of~$G$ is a \emph{basic \td} (\emph{with respect to~$\G$}) if it has the following properties:
\begin{enumerate}[label=(\roman*)]
	\item $(T,\mathcal{V})$ distinguishes at least two ends.
	\item Every adhesion set of $(T,\mathcal{V})$ is finite.
	\item $\G$ acts on~$(T,\mathcal{V})$ with precisely one orbit on~$E(T)$.
\end{enumerate}
If it is clear from the context which group we consider, we just say that $(T,\mathcal{V})$ is a basic \td\ of~$G$.
It follows from Theorem~\ref{thm_canonicalTD} that basic \td s always exist:

\begin{coro}\label{nicetreedecomposition}
	Let~$\G$ be a group acting on a locally finite graph~$G$ with at least two ends. 
	Then there is a basic \td~$(T,\mathcal{V})$ for~$G$.
\end{coro}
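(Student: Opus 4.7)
The plan is to invoke Theorem~\ref{thm_canonicalTD} and then cut the number of $\G$-orbits on edges of the decomposition tree down to exactly one by contracting everything outside one chosen orbit. Since $G$ has more than one end and any two ends are separated by some finite vertex set, there is a smallest $k\in\N$ for which some pair of ends of $G$ is $k$-distinguishable. Applying Theorem~\ref{thm_canonicalTD} with this $k$ yields a $\G$-invariant \td\ $(T,\Vcal)$ of adhesion at most~$k$ that efficiently distinguishes all $k$-distinguishable ends, so in particular at least one edge $e_0 = t_1t_2 \in E(T)$ has adhesion set $V_{t_1}\cap V_{t_2}$ separating two ends of $G$.

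Setting $\mathcal{O} := \G \cdot e_0$, I will form $(T',\Vcal')$ by contracting every edge of $T$ outside $\mathcal{O}$: the nodes of $T'$ are the components of the forest $T - \mathcal{O}$, two such nodes are joined in $T'$ precisely when they are linked by an edge of $\mathcal{O}$ in $T$, and for each $s \in V(T')$ with associated component $A_s \sub V(T)$ I set $V'_s := \bigcup_{t \in A_s} V_t$. Contracting edges of a tree gives a tree, so $T'$ is a tree. Since $\mathcal{O}$ is $\G$-invariant, each $\gamma \in \G$ permutes the components of $T-\mathcal{O}$ and sends $V'_s$ to $V'_{\gamma(s)}$, so the action of $\G$ on $T$ descends to an action on $(T',\Vcal')$; by construction this action has a single orbit on $E(T')$, giving property~(iii) of a basic \td.

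The remaining verifications follow almost mechanically from (T3) for $(T,\Vcal)$. Conditions (T1) and (T2) are immediate because each $V_t$ is contained in some $V'_s$. For (T3) on $(T',\Vcal')$, if $x \in V'_{s_1}\cap V'_{s_2}$ then $x$ lies in $V_{t_1'}\cap V_{t_2'}$ for some $t_i' \in A_{s_i}$, and the $t_1'$--$t_2'$ path in $T$ projects to the $s_1$--$s_2$ path in $T'$, so it visits $A_{s_3}$ for every $s_3$ on that path; (T3) for $(T,\Vcal)$ then places $x$ in some $V_{t_3} \sub V'_{s_3}$. The same path-projection argument shows that every adhesion set $V'_{s_1}\cap V'_{s_2}$ of $(T',\Vcal')$ is contained in a finite adhesion set $V_{t_1}\cap V_{t_2}$ of $(T,\Vcal)$ with $t_1t_2 \in \mathcal{O}$, yielding~(ii); and since the two sides of $T'$ across the edge induced by $e_0$ cover precisely the same vertex sets as the two sides of $T-e_0$, that edge still distinguishes the pair of ends that was separated by $e_0$, giving~(i). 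I anticipate no substantive obstacle, only some bookkeeping; the one point needing a moment's care is the finiteness of the contracted adhesion sets, which is exactly what (T3) supplies.
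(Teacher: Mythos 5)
Your proposal is correct and follows essentially the same approach as the paper: apply Theorem~\ref{thm_canonicalTD} to get a $\G$-invariant tree-decomposition that distinguishes some ends, pick an edge whose adhesion set separates two ends, and contract all edges of $T$ outside the orbit of that edge. The paper's proof is terser (it leaves the verification of (T1)--(T3) and properties (i)--(iii) to the reader), while you spell out the bookkeeping, but there is no substantive difference in the argument.
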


\begin{proof}
	By Theorem~\ref{thm_canonicalTD}, we find a $\Gamma$-invariant \td\ $(T,\Vcal)$ of bounded adhesion that separates some ends.
	Let $tt'$ be an edge of~$T$ such that $V_t\cap V_{t'}$ separates some ends.
	Let $E_{tt'}$ be the orbit of~$tt'$, i.\,e.\ the set $\{g(tt')\mid g\in\Gamma\}$, and let $T'$ be obtained from~$T$ by contracting each component $C$ of $T-E_{tt'}$ to a single vertex~$t_C$.
	We set $V_{t_C}:=\bigcup_{s\in C}V_s$ and set $\Vcal'$ be the set of those sets~$V_{t_C}$.
	It is easy to see that $(T',\Vcal')$ is a basic \td\ with respect to~$\Gamma$: the only non-trivial requirement is that $(T',\Vcal')$ distinguishes at least two ends.
	But this follows from the fact that $V_t\cap V_{t'}$ separates two ends.
\end{proof}

Let us combine our results on connected and basic \td s.

\begin{coro}\label{compatible}
	Let~$\G$ be a group acting on a connected locally finite graph~$G$ with at least two ends.
	Then the following hold.
	\begin{enumerate}[label=(\roman*)]
	\item There is a connected basic \td\ of~$G$ with respect to~$\Gamma$.
	\item If $(T,(V_t)_{t\in V(T)})$ is a basic \td\ of~$G$ with respect to~$\Gamma$, then there is a connected basic \td\ $(T,(V'_t)_{t\in V(T)})$ of~$G$ with respect to~$\G$ such that $V_t\sub V'_t$ for every $t\in V(T)$.
	\end{enumerate}
\end{coro}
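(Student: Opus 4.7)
The plan is to deduce both parts of the corollary from the machinery developed in the section, with (ii) doing the real work and (i) following immediately from (ii) combined with Corollary~\ref{nicetreedecomposition}.

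For (ii), I would start from the given basic \td\ $(T,(V_t)_{t\in V(T)})$ and apply Proposition~\ref{connected TD} to produce a $\Gamma$-invariant \td\ $(T,(V'_t)_{t\in V(T)})$ on the same decomposition tree with $V_t\sub V'_t$, with every adhesion set finite, and with every adhesion set connected. Since $G$ is connected and all adhesion sets of $(T,\Vcal')$ are connected, Lemma~\ref{nicetree} tells us that every part $V'_t$ induces a connected subgraph of~$G$. So $(T,\Vcal')$ is a connected \td\ of finite adhesion, and the inclusion $V_t\sub V'_t$ is exactly what Proposition~\ref{connected TD} delivers.

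It remains to verify that $(T,\Vcal')$ is still \emph{basic}, i.e.\ that it satisfies the three conditions in the definition. Conditions (ii) and (iii) are immediate: all adhesion sets are finite by Proposition~\ref{connected TD}, and since the decomposition tree~$T$ and the induced $\Gamma$-action on~$T$ are unchanged, $\Gamma$ still acts on~$T$ with a single orbit on $E(T)$. The only point that needs a short argument is condition~(i), that $(T,\Vcal')$ still distinguishes two ends. Here I would fix an edge $t_1t_2$ of~$T$ whose adhesion set $V_{t_1}\cap V_{t_2}$ distinguishes two ends $\omega_1,\omega_2$ in the original \td, with $T_1,T_2$ the components of $T-t_1t_2$. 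Since $V_t\sub V'_t$, we have $\bigcup_{t\in T_i}V_t\sub\bigcup_{t\in T_i}V'_t$, so any tail of a ray in~$\omega_i$ that lives in $\bigcup_{t\in T_i}V_t$ also lives in $\bigcup_{t\in T_i}V'_t$; since $V'_{t_1}\cap V'_{t_2}$ is finite, the new adhesion set still witnesses the distinction of $\omega_1$ and $\omega_2$ by Remark~\ref{rem_adhSeps}.

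Part (i) is then immediate: Corollary~\ref{nicetreedecomposition} supplies some basic \td\ of~$G$ with respect to~$\Gamma$, and applying part (ii) to it yields a connected basic \td. The only subtle step in the whole argument is the preservation of condition~(i) of the definition of a basic \td\ under the enlargement of parts, and I expect that to be the main thing worth writing out carefully; everything else is a direct invocation of the preceding results.
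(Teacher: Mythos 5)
Your proof is correct and follows essentially the same route as the paper: invoke Corollary~\ref{nicetreedecomposition} for (i), and combine Proposition~\ref{connected TD} with Lemma~\ref{nicetree} for (ii). The paper's own proof is briefer in that it does not spell out why the enlarged decomposition remains basic; your explicit check that conditions (ii), (iii) of the definition are inherited and that end-distinguishing is preserved under $V_t\sub V'_t$ is a worthwhile addition but not a departure in approach.
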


\begin{proof}
By Corollary~\ref{nicetreedecomposition}, there is a basic \td\ of~$G$.
Having a basic \td\ $(T,(V_t)_{t\in V(T)})$, Proposition~\ref{connected TD} and Lemma~\ref{nicetree} imply the existence of a connected basic \td\ $(T,(V'_t)_{t\in V(T)})$ with $V_t\sub V'_t$ for every $t\in V(T)$.
\end{proof}

Now we investigate some of the connections between the graphs and the parts of any of the connected basic \td s.
We start by showing that these \td s behave well with respect to the class of \qt\ graphs.

\begin{prop}\label{Aut(G)/V_t}\label{quasitransitivebags}
	Let~$\G$ be a group acting \qt ly on a connected locally finite graph $G$ with at least two ends and let $(T,\Vcal)$ be a connected basic \td\ of~$G$. 
	Then for each part $V_t\in \Vcal$ its stabilizer $\Gamma_{V_t}$ acts \qt ly on $G[V_t]$. 
\end{prop}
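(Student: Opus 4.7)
My plan is to split $V_t$ into its \emph{boundary} $B := \bigcup_{s \sim_T t}(V_t \cap V_s)$, the union of all adhesion sets at $t$, and its \emph{interior} $I := V_t \setminus B$, and to bound the number of $\Gamma_{V_t}$-orbits on each piece separately.

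For the boundary, since $(T,\Vcal)$ is basic, $\Gamma$ is transitive on $E(T)$, so all adhesion sets lie in a single $\Gamma$-orbit and share the same finite cardinality, which I denote by $c$. I work inside the subgroup $H := \{g \in \Gamma : g(t) = t\} \leq \Gamma_{V_t}$, the stabilizer of the node $t$ in the induced action on $T$; since $H$-orbits refine $\Gamma_{V_t}$-orbits, any $H$-orbit bound transfers. A short argument using transitivity on $E(T)$ shows that $H$ has at most two orbits on the neighbours of $t$ in $T$: fix a neighbour $s_0$, and for an arbitrary neighbour $s$ pick $g \in \Gamma$ with $g(\{t,s_0\}) = \{t,s\}$; then either $g \in H$ (so $s \in Hs_0$), or $g$ swaps $t$ with $s_0$, and the composition of two such swap-elements lies in $H$ and matches any two swap-type neighbours. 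For each of the (at most two) $H$-orbits of neighbours, pick a representative $s$; then translating by $H$ identifies the $H$-orbits of vertices in $\bigcup_{s' \in Hs}(V_t \cap V_{s'})$ with orbits of the joint stabiliser $\{h \in H : h(s)=s\}$ on the finite set $V_t \cap V_s$. Hence $B$ contains at most $2c$ many $H$-orbits, and therefore at most $2c$ many $\Gamma_{V_t}$-orbits.

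For the interior, I use that every $\gamma \in \Gamma$ permutes the family $\Vcal$, so the number of parts containing a given vertex is a $\Gamma$-invariant; consequently the vertices of any one $\Gamma$-orbit on $V(G)$ are either all interior (each lying in exactly one part) or all boundary. If $v, w \in I \cap V_t$ satisfy $g(v) = w$ for some $g \in \Gamma$, then $V_{g(t)} = g(V_t)$ contains $w$, and since $w$ lies in no part other than $V_t$, we must have $V_{g(t)} = V_t$ as sets, i.e.\ $g \in \Gamma_{V_t}$. Thus each $\Gamma$-orbit contributes at most one $\Gamma_{V_t}$-orbit to $I$, and quasi-transitivity of $\Gamma$ on $V(G)$ bounds the total number of such contributions by $k := |V(G)/\Gamma|$.

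Combining the two estimates, $\Gamma_{V_t}$ has at most $2c + k$ orbits on $V_t$, giving the claimed quasi-transitive action of $\Gamma_{V_t}$ on $G[V_t]$. I expect the main obstacle to be the boundary step, specifically the bound of two on the number of $H$-orbits of neighbours of $t$: this is where the basic-\td\ assumption (one $\Gamma$-orbit on $E(T)$) enters crucially, and the case in which some $\Gamma$-element swaps the endpoints of an edge of $T$ requires the small composition trick described above.
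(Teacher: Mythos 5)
Your proof is correct and takes essentially the same approach as the paper: both split $V_t$ into vertices inside and outside adhesion sets, both handle the interior by noting that any $\gamma$ mapping one interior vertex to another must stabilize the part, and both handle the boundary via the dichotomy between elements fixing $t$ and elements moving it, using compositions of the latter to get at most two orbits of adhesion sets. (One small inaccuracy: when $g(\{t,s_0\})=\{t,s\}$ but $g\notin H$, $g$ sends $t\mapsto s$ and $s_0\mapsto t$, which is not literally a swap of $t$ and $s_0$ unless $s=s_0$; the composition argument you then make is still valid.)
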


\begin{proof}
	If $u\in V_t$ does not lie in any adhesion set, then none of its images $v\in V_t$ under elements of~$\Gamma$ lie in an adhesion set.
	Hence, if $\gamma\in\G$ maps $u$ to~$v$, it must fix $V_t$ setwise, as it acts on $(T,\Vcal)$, so it lies in the stabilizer of~$V_t$.
	Thus, the intersection of $V_t$ with the $\Gamma$-orbit of~$u$ is the $\Gamma_{V_t}$-orbit of~$u$.
	
	Now we consider the vertices in an adhesion set $V_t\cap V_{t'}$.
	Let $V_t\cap V_s$ be another adhesion set.
	As $(T,\Vcal)$ is basic, there exists $\gamma\in\Gamma$ that maps $V_t\cap V_{t'}$ to $V_t\cap V_s$.
	If $\gamma$ stabilizes $V_t$, all vertices of $V_t\cap V_s$ lie in $\Gamma_{V_t}$-orbits of the vertices of $V_t\cap V_{t'}$.
	Let us assume that $\gamma$ does not stabilize~$V_t$ and let $V_t\cap V_{s'}$ be another adhesion set such that the element $\gamma'\in\Gamma$ that maps $V_t\cap V_{t'}$ to $V_t\cap V_{s'}$ does not stabilize $V_t$.
	Then $\gamma'\gamma\inv$ maps $V_t\cap V_s$ to $V_t\cap V_{s'}$ and stabilizes $V_t$.
	We conclude that the number of $\Gamma_{V_t}$-orbits of vertices in adhesion sets of~$V_t$ is at most twice the number of $\Gamma$-orbits of vertices in adhesion sets of~$V_t$.
\end{proof}

Subtrees of connected basic \td s that contain a common adhesion set cannot be to large as the following lemma shows.

\begin{lemma}\label{lem_TXfiniteDiam}
Let $\Gamma$ be a group acting \qt ly on a connected  locally finite graph~$G$ with at least two ends and let $(T,\Vcal)$ be a connected basic \td\ of~$G$ with respect to~$\Gamma$.
For an adhesion set~$X$ let $T_X$ be the maximal subtree of~$T$ such that $X\sub V_t$ for all $t\in V(T_X)$.
Then the diameter of~$T_X$ is at most~$2$.
\end{lemma}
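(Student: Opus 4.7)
The plan is to argue by contradiction, supposing $T_X$ contains a path of length at least~$3$.

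First, because $(T,\Vcal)$ is basic, $\Gamma$ has one orbit on~$E(T)$, so all adhesion sets of $(T,\Vcal)$ share the common size $k=|X|$. For any edge $uv$ of~$T$ with both endpoints in $V(T_X)$, the adhesion set $V_u\cap V_v$ has size~$k$ and contains~$X$, hence equals~$X$. In particular every edge of~$T_X$ has adhesion set~$X$, and any $\gamma\in\Gamma$ mapping one such edge to another satisfies $\gamma(X)=X$; thus $\Gamma_X$ acts transitively on~$E(T_X)$.

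Next I would observe that any finite tree on which a group acts edge-transitively has diameter at most~$2$: a longest path of length $\geq 3$ would contain both a leaf-edge and an interior edge, whose endpoint-degrees cannot be matched by any automorphism. Since $T_X$ is assumed to have diameter at least~$3$, it must therefore be infinite, and edge-transitivity forces it to be a biregular tree in which both bipartition classes have degree at least~$2$. Both bipartition classes are then infinite, and so every $\Gamma_X$-orbit on $V(T_X)$ is infinite (each such orbit being either a single bipartition class or all of $V(T_X)$).

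Finally, by Remark~\ref{rem_adhSeps} applied to any edge on the $s$-$t$ path in~$T_X$, the sets $V_s\sm X$ and $V_t\sm X$ lie in distinct components of $G-X$ for any two distinct $s,t\in V(T_X)$. Since $G$ is connected and locally finite and $X$ is finite, $G-X$ has only finitely many components; thus only finitely many $s\in V(T_X)$ can satisfy $V_s\supsetneq X$. But $|V_s|$ is constant on each $\Gamma_X$-orbit, and each such orbit is infinite, so $V_s=X$ must hold for every $s\in V(T_X)$. The adhesion-size identity from the first paragraph then implies that every $T$-neighbour of any node of~$T_X$ also lies in~$T_X$, so $T_X=T$ by connectedness of~$T$, whence $V(G)=\bigcup_t V_t=X$ is finite, contradicting the fact that~$G$ has at least two ends. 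The main technical point will be the classification of edge-transitive trees used in the second paragraph, which is what converts the finiteness of the number of components of~$G-X$ into the required contradiction.
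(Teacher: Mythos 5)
Your proof is correct, and the opening observation (that all adhesion sets inside $T_X$ equal $X$, so $\Gamma_X$ acts edge-transitively on $T_X$) matches the paper's starting point. From there, though, the two arguments diverge. The paper picks a maximal path $R$ in $T_X$ and shows directly, by hand, that it must be a double ray: it takes elements of $\Gamma$ mapping $t_i t_{i+1}$ to $t_{i+2} t_{i+3}$ and checks cases to produce a forbidden extra neighbour of a supposed endpoint. You instead appeal to the general classification of edge-transitive trees: a finite edge-transitive tree has diameter $\le 2$ (leaf edges vs.\ interior edges cannot be in the same orbit), so $T_X$ is infinite, biregular, with all degrees $\ge 2$, hence both bipartition classes and all $\Gamma_X$-orbits on $V(T_X)$ are infinite. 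The endgames also differ. The paper uses the bound ``at most two $\Gamma$-orbits on $V(T)$'' to deduce that infinitely many parts along $R$ strictly contain $X$; since those surpluses $V_{t_i}\setminus X$ are pairwise separated by $X$, connectedness of the parts forces some vertex of $X$ to have infinitely many neighbours, contradicting local finiteness. You go in the opposite direction: the $V_s\setminus X$ ($s\in V(T_X)$) meet pairwise-disjoint sets of components of $G-X$ (by Remark~\ref{rem_adhSeps}), and $G-X$ has only finitely many components; since surplus is constant on each infinite $\Gamma_X$-orbit, no part can have a surplus, so $V_s=X$ throughout $T_X$. Then the equal-cardinality observation forces $T_X=T$, giving $G=X$ finite, again a contradiction. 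Your route trades the paper's bespoke double-ray extension argument for a cleaner appeal to the structure theory of edge-transitive trees, and trades the neighbour-counting in $X$ for a count of components of $G-X$; both are sound, and each makes the underlying reason for the bound transparent in its own way.
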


\begin{proof}
Suppose the diameter of~$T_X$ is at least~$3$.
We have $V_t\cap V_{t'}=X$ for every $tt'\in E(T_X)$ since $X$ is contained in every adhesion set $V_t\cap V_{t'}$ and since they all have the same size.
Let $R=\ldots t_0t_1\ldots$ be a maximal path in~$T_X$.
We shall show that~$R$ is a double ray.

Let us suppose that $t_{i+3}$ is the last vertex on~$R$.
As $(T,\Vcal)$ is basic, we find $\gamma\in\Gamma$ such that $\gamma(t_it_{i+1})=t_{i+2}t_{i+3}$.
Note that $\gamma$ fixes $X=V_{t_{i}}\cap V_{t_{i+1}}=V_{t_{i+2}}\cap V_{t_{i+3}}$ setwise.
If $\gamma(t_i)=t_{i+2}$, then $\gamma(t_{i+2})$ is a neighbour of~$t_{i+3}$ distinct from~$t_{i+2}$ that contains~$X$, a contradiction to the choice of~$i$.
If $\gamma(t_i)=t_{i+3}$, then $\gamma$ fixes the edge $t_{i+1}t_{i+2}$ but neither of its incident vertices.
Let $\gamma'\in\Gamma$ map $t_{i+1}t_{i+2}$ to $t_{i+2}t_{i+3}$.
Note that $\gamma'$ fixes~$X$ setwise, too.
Then either $\gamma'$ or $\gamma'\gamma$ maps $t_i$ to a neighbour of~$t_{i+3}$ distinct from~$t_{i+2}$.
This is again a contradiction, which shows that $R$ has no last vertex.
Analogously, $R$ has no first vertex.
So it is a double ray.

Note that the part of some node of~$T_X$ contains~$X$ properly as $G=X$ is finite otherwise.
But as $\Gamma$ acts transitively on $E(T)$, we have at most two $\Gamma$-orbits on $V(T)$.
Hence infinitely many parts of~$R$ contain $X$ properly.
Thus and since each $V_{t_i}$ is connected, one vertex of~$X$ must have infinitely many neighbours.
This contradiction to local finiteness shows the assertion.
\end{proof}

Our next result is a characterisation of the finite parts of a connected basic \td.

\begin{prop}\label{BagDegree}
	Let~$\G$ be a group acting \qt ly on a connected  locally finite graph~$G$ with at least two ends and let $(T,\Vcal)$ be a connected basic \td\ of~$G$.
	Then the degree of a node $t \in V(T)$ is finite if and only if $V_t$ is finite. 
\end{prop}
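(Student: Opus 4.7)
The plan is to prove the two directions separately. For the direction ``$V_t$ finite implies $\deg(t)$ finite'', I associate with each neighbour~$t'$ of~$t$ in~$T$ the vertex set $B_{t'}\defi\bigcup_{s\in T_{t'}}V_s$, where $T_{t'}$ is the component of $T-t$ containing~$t'$. Since $(T,\Vcal)$ is basic, properties~(i) and~(iii) together imply that for every edge of~$T$ there are ends of~$G$ living on both of its sides, so each $B_{t'}$ is infinite. As $V_t$ is finite, $B_{t'}\sm V_t$ is nonempty for every neighbour~$t'$. Property~(T3) yields $B_{t'_1}\cap B_{t'_2}\sub V_t$ whenever $t'_1\neq t'_2$ are both neighbours of~$t$, and Remark~\ref{rem_adhSeps} shows that each $B_{t'}\sm V_t$ is a union of connected components of $G-V_t$ disjoint from those coming from any other neighbour. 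Finally, $V_t$ is finite and $G$ is locally finite, so only finitely many edges leave~$V_t$; as $G$ is connected, every component of $G-V_t$ must contain an endpoint of one such edge, whence there are only finitely many components and therefore only finitely many neighbours of~$t$.

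For the converse I prove the contrapositive: assuming $\deg(t)=n<\infty$, I will deduce that $V_t$ is finite. Let $X_1,\ldots,X_n$ be the adhesion sets at~$t$ and set $B\defi X_1\cup\cdots\cup X_n$, a finite subset of~$V_t$. Suppose for contradiction that $V_t$ is infinite. By Proposition~\ref{Aut(G)/V_t}, $\G_{V_t}$ acts \qt ly on $G[V_t]$, and since $V_t$ is infinite, at least one of the finitely many $\G_{V_t}$-orbits in~$V_t$ must be infinite. Now $\G_{V_t}$ fixes~$t$ and hence permutes the neighbours of~$t$ in~$T$; it therefore permutes the collection $\{X_1,\ldots,X_n\}$ setwise and so preserves the finite set~$B$ setwise. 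The pointwise stabiliser~$H$ of~$B$ in~$\G_{V_t}$ consequently has finite index (at most $|B|!$), and thus inherits an infinite orbit on~$V_t$.

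The desired contradiction follows from a standard fact for locally finite graphs: any automorphism of~$G$ fixing the finite set~$B$ pointwise preserves the distance to~$B$, so its orbits on $V(G)$ are contained in metric spheres about~$B$, each of which is finite by local finiteness. Hence~$H$ has only finite orbits, contradicting the previous paragraph. The main points I expect to have to handle with care are the ``both sides contain ends'' step --- which crucially combines property~(i) with the one-orbit-on-edges condition~(iii) --- and this orbit-finiteness fact, which, although standard, deserves a brief justification within the proof for completeness.
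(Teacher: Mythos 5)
Your proposal is correct, but both halves go by a route different from the paper's, so a comparison is worth spelling out.

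For the direction ``$V_t$ finite $\Rightarrow \deg(t)$ finite'', the paper simply observes that each vertex of $G$ lies in only finitely many adhesion sets (using the single $\Gamma$-orbit of adhesion sets together with local finiteness), whence $V_t$ can meet only finitely many of them. You instead argue via the components of $G-V_t$: every neighbour $t'$ of $t$ contributes at least one component of $G-V_t$ (since by (i) and (iii) every adhesion set separates ends, so $B_{t'}$ is infinite while $V_t$ is finite), these contributions are pairwise disjoint by (T3) and Remark~\ref{rem_adhSeps}, and $G-V_t$ has only finitely many components because $V_t$ is a finite vertex set in a connected locally finite graph. This is a valid and pleasantly elementary alternative; note that it uses the end-separating property of every adhesion set, which the paper's one-line argument does not need.

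For the converse, both proofs rest on Proposition~\ref{Aut(G)/V_t} and the finiteness of the union $B$ of adhesion sets at $t$, but the packaging differs. The paper argues directly: a finite set of $\Gamma_{V_t}$-orbit representatives lies within bounded distance of $B$, this bound propagates to all of $V_t$ because $\Gamma_{V_t}$ fixes $B$ setwise and acts by isometries, so $V_t$ sits inside a finite ball. You argue by contradiction through the pointwise stabiliser $H$ of $B$ in $\Gamma_{V_t}$: it has finite index, so if $V_t$ were infinite then $H$ would inherit an infinite orbit, yet $H$-orbits are confined to the finite spheres around $B$. The ingredients (quasi-transitivity of $\Gamma_{V_t}$, finiteness of $B$, local finiteness, connectedness of $G[V_t]$) are the same, but your route through finite-index subgroups is a genuine variant rather than a rewording, and it is correct as stated.
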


\begin{proof}
	Note that each vertex lies in only finitely many adhesion sets as we only have one orbit of adhesion sets and as $G$ is locally finite.
	So if~$V_t$ is finite, then the degree of~$t$ is finite, too.
	
	Now let us assume that the degree of~$t$ is finite.
	Let $U$ be a subset of~$V_t$ that consists of one vertex from each $\Gamma_{V_t}$-orbit that meets~$V_t$.
	By Proposition~\ref{quasitransitivebags} the set~$U$ is finite.
	The vertices in~$U$ have bounded distance to the union $W$ of all adhesion sets in~$V_t$.
	As they meet all $\Gamma_{V_t}$-orbits and $\Gamma_{V_t}$ fixes~$W$ setwise, all vertices in~$V_t$ have bounded distance to~$W$.
	Note that $W$ is finite as $t$ has finite degree.
	Since $G$ is locally finite, $V_t$ must be finite.
\end{proof}

Let $(T,\Vcal)$ be a \td\ of a graph~$G$.
We say that an end $\eta$ of~$T$ \emph{captures} an end $\omega$ of~$G$ if for every ray $R=t_1,t_2,\ldots$ in~$\eta$ the union $\bigcup_{i\in\N}V_{t_i}$ captures~$\omega$ and a node of~$T$ \emph{captures} $\omega$ if its part does so.

Let us now investigate where the ends of~$G$ lie in $(T,\Vcal)$.

\begin{prop}
	\label{capture thick ends_new}\label{capture thick ends}
	Let~$G$ be a graph and let~$(T,\Vcal)$ be a connected \td\ of~$G$ such that the maximum size of its adhesion sets is at most $k\in\N$. 
	Then the following holds.
	\begin{enumerate}[label=(\roman*)]
	\item\label{itm_cte_1} Each end of~$G$ is captured either by an end or by a node of~$T$.
	\item\label{itm_cte_2} Every thick end of~$G$ is captured by a node of~$T$.
	\item\label{itm_cte_3} Every end of~$T$ captures a unique thin end of~$G$, which has degree at most~$k$.
	\item\label{itm_cte_4} Assume that $\Gamma$ acts \qt ly on $G$ and that $(T,\Vcal)$ is $\Gamma$-invariant with only finitely many $\Gamma$-orbits on~$E(T)$. Every end of~$G$ that is captured by a node $t\in V(T)$ corresponds to a unique end of~$G[V_t]$.\footnote{This shall mean that for every end $\omega$ of~$G$ that is captured by $t\in V(T)$ there is a unique end $\omega_t$ of $G[V_t]$ with $\omega_t\sub\omega$.}
	\end{enumerate}
\end{prop}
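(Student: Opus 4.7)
My plan is to handle (i)--(iv) in sequence, using as a common device an edge-orientation of~$T$ induced by an end $\omega$ of~$G$. For each edge $tt'\in E(T)$, the adhesion set $V_t\cap V_{t'}$ has size at most~$k$ and, by Remark~\ref{rem_adhSeps}, separates the two halves $\bigcup_{s\in T_1}V_s$ and $\bigcup_{s\in T_2}V_s$, where $T_1,T_2$ are the components of $T-tt'$ containing $t$ and $t'$ respectively; all rays of~$\omega$ have their tails in the same component of $G-(V_t\cap V_{t'})$, which lies entirely in one of these two halves, and I orient $tt'$ toward the corresponding side. The decisive local fact is that at every node $t$, at most one incident edge points away from~$t$: if two edges $tt_1,tt_2$ both did, then $\omega$'s tails would lie both past $t_1$ and past $t_2$, and by (T3) applied to the $t_1$-$t$-$t_2$ path in~$T$, the intersection of those two ``far-side'' sets is contained in $V_{t_1}\cap V_t\cap V_{t_2}$, a finite set, contradicting that rays of~$\omega$ are infinite.

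For (i), this dichotomises: either some node~$t$ is a sink (no outgoing edge), in which case a tail of $R\in\omega$ avoiding $V_t$ is, by (T3), confined to a single component of $G-V_t$ and hence lies past a single neighbour~$t'$ of~$t$, forcing $\omega$ to live past~$t'$ and contradicting that $tt'$ points toward~$t$; so $\omega$ is captured by~$t$. Or every node has exactly one outgoing edge, and iterating these produces a ray $R_T=t_0t_1\ldots$ in~$T$ and hence an end~$\eta$. To check that~$\eta$ captures $\omega$, suppose for contradiction that some $R\in\omega$ has a tail disjoint from $\bigcup_i V_{t_i}$; each such vertex $r_n$ sits in a subtree $T_{r_n}=\{s:r_n\in V_s\}$ of $T$ disjoint from $R_T$, so it admits a well-defined closest point $\pi(r_n)\in R_T$. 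By (T2), consecutive $r_n,r_{n+1}$ share a part, so $\pi$ is eventually constant along~$R$; yet the orientation forces $\pi(r_n)$ to lie past every $t_i$ for large~$n$, a contradiction.

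For (ii), if a thick end~$\omega$ were captured by an end~$\eta$, pick $k+1$ pairwise disjoint rays of~$\omega$. Each has, for every~$i$, a tail past the adhesion set $X_i=V_{t_i}\cap V_{t_{i+1}}$. For $i$ large each such ray meets~$X_i$: either the ray has some vertex on the near side of~$X_i$ (forcing a crossing), or its starting vertex~$v$ lies on the far side for every~$i$, forcing $T_{\{v\}}$ to contain a tail of~$R_T$ and hence $v\in X_i$ itself. Disjointness gives $k+1$ distinct vertices in~$X_i$, contradicting $|X_i|\leq k$. For (iii), fix~$\eta$ with ray $R_T=t_0t_1\ldots$. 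Each $X_i$ is non-empty (otherwise~$G$ would split across~$t_it_{i+1}$), and by Lemma~\ref{nicetree} every $G[V_{t_{i+1}}]$ is connected, so picking $x_i\in X_i$ and joining $x_i$ to $x_{i+1}$ inside $G[V_{t_{i+1}}]$ yields an infinite walk whose extraction gives a ray~$R$ with tails past every~$X_i$. The orientation argument then forces the end of~$R$ to be captured by~$\eta$, and rerunning~(ii)'s crossing count with $k'+1$ disjoint rays gives degree at most~$k$, hence thinness. Uniqueness is the main obstacle: for two hypothetical distinct ends $\omega_1\ne\omega_2$ captured by~$\eta$, a finite $G$-separator~$X$ between them reduces, for large~$i$, to the subset $X_\eta:=\{x\in X:\eta\in T_{\{x\}}\}\subseteq X_i$; combining disjoint rays from~$\omega_1$ and~$\omega_2$ that all cross~$X_i$ then yields more than~$k$ distinct crossings, contradicting $|X_i|\leq k$.

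For (iv), given $\omega$ captured by~$t$, pick $R\in\omega$ meeting $V_t$ in infinitely many vertices $v_1,v_2,\ldots$; since $G[V_t]$ is connected, I join consecutive $v_j$'s by paths inside $G[V_t]$ and extract a ray $R'\subseteq V_t$. Since $R$ and $R'$ share infinitely many vertices, they lie in the same $G$-end~$\omega$, giving the required $\omega_t\subseteq\omega$. For uniqueness, the quasi-transitivity of~$\G$ and the finite-$\G$-orbit hypothesis on $E(T)$ give, via Proposition~\ref{quasitransitivebags}, that the adhesion sets inside $V_t$ form only finitely many $\G_{V_t}$-orbits; a hypothetical finite $G[V_t]$-separator between two distinct $\omega_t,\omega_t'\subseteq\omega$ can then be augmented by finitely many adhesion sets to produce a finite $G$-separator still separating them, contradicting $\omega_t,\omega_t'\subseteq\omega$.
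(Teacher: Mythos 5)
Your parts (i) and (ii) follow the paper's approach: orient each edge of $T$ toward the side where $\omega$'s tails lie, observe that out-degrees are at most $1$, and conclude either a sink or a ray in $T$. This is fine, as is the crossing argument in (ii). However, two of your four parts have genuine gaps.

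\textbf{Part (iii), uniqueness.} You write that ``combining disjoint rays from $\omega_1$ and $\omega_2$ that all cross $X_i$ yields more than $k$ distinct crossings.'' But (ii) only gives $\deg(\omega_1),\deg(\omega_2)\le k$, not $=k$. If, say, $\deg(\omega_1)=\deg(\omega_2)=1$, you obtain only two disjoint rays crossing $X_i$, which is no contradiction to $|X_i|\le k$. The paper avoids this entirely: it takes single rays $Q\in\omega_1$, $R\in\omega_2$, notes that both cross $X_i$ for all large $i$, and uses connectedness of the parts together with local finiteness to build infinitely many pairwise disjoint $Q$--$R$ paths inside the parts, so that $Q$ and $R$ are equivalent. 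A counting argument against $|X_i|\le k$ cannot work here; you need to produce connecting paths.

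\textbf{Part (iv), existence of $\omega_t$.} You join consecutive $v_j$'s by arbitrary paths in $G[V_t]$, ``extract a ray $R'$,'' and then assert ``Since $R$ and $R'$ share infinitely many vertices, they lie in the same end.'' This is not substantiated: deleting closed subwalks from the walk can discard all but finitely many of the $v_j$'s, so the extracted path need not meet $R$ infinitely often. Moreover, you never use the hypothesis that $\Gamma$ has finitely many orbits on $E(T)$ in the existence part, which is a warning sign. The paper uses exactly that hypothesis: the subpaths of $R$ outside $V_t$ have their endpoints in a common adhesion set, the finitely-many-orbits assumption bounds the diameter of adhesion sets, so the replacement paths may be taken to be geodesics in $G[V_t]$ of uniformly bounded length. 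This is what guarantees that the resulting walk escapes every finite ball, that the extracted path is a ray, and that every vertex of $R'$ stays within bounded $G$-distance of $R$, whence $R'\in\omega$. Without that bounded-length control your argument does not go through.

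Two minor points: in (iii) your existence construction (joining chosen $x_i\in X_i$ inside $G[V_{t_{i+1}}]$) need not produce an unbounded walk if the adhesion sets $X_i$ along $R_T$ are all equal; and in (iv) the reduction to finitely many orbits of adhesion sets follows directly from the hypothesis on $E(T)$, not from Proposition~\ref{quasitransitivebags}, which is stated only for connected basic tree-decompositions.
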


\begin{proof}
	Let~$\omega$ be an end of~$G$ and let $Q,R$ be two rays in~$\omega$.
	For an edge $st\in E(T)$ let $T_s$ and $T_t$ be the subtrees of $T-st$ with $s\in V(T_s)$ and $t\in V(T_t)$.	
	If the ray $Q$ has all but finitely many vertices in $\bigcup _{x\in V(T_s)}V_x$ and $R$ has all but finitely many vertices in $\bigcup_{x\in V(T_t)}V_x$ or vice versa, then we have a contradiction as $Q$ and $R$ cannot lie in the same end if they have tails that are separated by the finite vertex set $V_s\cap V_t$.
	We now orient the edge $st$ from $s$ to~$t$ if $Q$ and~$R$ lie in $\bigcup_{x\in V(T_t)}V_x$ eventually and we orient it from $t$ to~$s$ if the rays lie in $\bigcup _{x\in V(T_s)}V_x$ eventually.
	Obviously, every node of~$T$ has at most one outgoing edge.
	Let $t_Q, t_R$ be nodes of~$T$ such that the first vertex of~$Q$ lies in~$V_{t_Q}$, and the first vertex of~$R$ lies in~$V_{t_R}$, and let $P_Q$ and $P_R$ be the maximal (perhaps infinite) directed paths in our orientation of~$T$ that start at~$t_Q$ and $t_R$, respectively.
	Note that if $P_Q$ and $P_R$ meet at a vertex, they continue in the same way.
	Thus, if they meet, they either end at a common vertex or have a common infinite subpath.
	We shall show that $P_Q$ and $P_R$ meet.
	Let $P$ be the $t_Q$-$t_R$ path in~$T$.
	Then there is a unique sink $x$ on it as every node of~$T$ has at most one outgoing edge.
	This sink is a common node of~$P_Q$ and~$P_R$.
	If $P_Q$ and $P_R$ end at a node, this node captures~$\omega$ and if they share a common infinite subpath, this is a ray whose end captures~$\omega$.
	We proved \ref{itm_cte_1}.
	
	Now let us assume that $\omega$ has degree at least~$k+1$.
	Then there are $k+1$ pairwise disjoint rays $R_1,\ldots, R_{k+1}$ in~$\omega$.
	Let $t_i, P_i$ be a node and a path of~$T$ defined for $R_i$ as we defined $t_R$ and $P_R$ for the ray~$R$.
	By an easy induction, we can extend the above argument that $P_Q$ and $P_R$ meet to obtain that all $P_1,\ldots, P_{k+1}$ have a common node~$x$.
	Let us suppose that $\omega$ is captured by an end $\eta$ of~$T$.
	Let $y$ be the node of~$T$ that is adjacent to~$x$ and that separates $x$ and~$\eta$.
	Then all rays $R_i$ must contain a vertex of $V_x\cap V_y$.
	This is not possible as $V_x\cap V_y$ contains at most~$k$ vertices and the rays $R_i$ are disjoint.
	This contradiction shows \ref{itm_cte_2} and the second part of~\ref{itm_cte_3}.

	Let $R,Q$ be two rays that lie in ends of~$G$ that are captured by the same end $\eta$ of~$T$.
	With the notations $P_Q,P_R$ as above, the intersection $P_Q\cap P_R$ is a ray in~$\omega$.
	As $G$ is locally finite and $(T,\Vcal)$ is a connected \td, there are infinitely many disjoint paths between $Q$ and~$R$ and thus, they are equivalent and lie in the same end of~$G$.
	This proves~\ref{itm_cte_3}.
	
	To prove~\ref{itm_cte_4}, let us assume that $\Gamma$ acts \qt ly on $G$ and has finitely many orbits on the edges of the decomposition tree $T$.
	Let $\omega$ be an end of~$G$ that is captured by a node $t\in V(T)$ and let $R$ be a ray in~$\omega$ that starts at a vertex in~$V_t$.
	Since $V_t$ captures~$\omega$, there are infinitely many vertices of~$V_t$ on~$R$.
	Whenever $R$ leaves~$V_t$ through an adhesion set, it must reenter it through the same adhesion set by Remark~\ref{rem_adhSeps}.
	We replace every such subpath~$P$, where the end vertices of~$P$ lie in a common adhesion set and the inner vertices of~$P$ lie outside of~$V_t$, by a geodesic in~$G[V_t]$ between the end vertices of~$P$.
	We end up with a walk $W$ with the same starting vertex as~$R$.
	We shall see that $W$ contains a one-way infinite path.
	First, we recursively delete closed subwalks of~$W$ to end up with a path~$R'$.
	Since $G$ is locally finite and $R$ meets $V_t$ infinitely often, $R$ contains vertices of~$V_t$ that are arbitrarily far away from the starting vertex of~$R$.
	As we only took geodesics to replace the subpaths of~$R$ that were outside of~$V_t$ and as $\Gamma$ acts on $(T,\Vcal)$ with only finitely many orbits on the edges of~$T$, these replacement paths have a bounded length.
	Hence, $W$ eventually leaves every ball of finite diameter around its starting vertex.
	This implies that $R'$ is a ray.
	Obviously, $R$ and $R'$ are equivalent.
	Thus, $G[V_t]$ contains a ray in~$\omega$.
	Let $\omega_t$ be the end of $G[V_t]$ that contains~$R'$ and let $Q$ be a ray in~$\omega_t$.
	Since no finite separator can separate $Q$ and $R'$ in~$G[V_t]$, the rays are also equivalent in~$G$.
	Thus, we have shown $\omega_t\sub\omega$.
	
	Let $\omega_t'$ be an end in $G[V_t]$ different from~$\omega_t$, let $S$ be a finite subset of~$V_t$ that separates $\omega_t$ from $\omega_t'$, and let $P$ be a path in $G$ connecting vertices in different components of $G[V_t]-S$.
	As before, whenever $P$ leaves $V_t$ through an adhesion set, it must reenter it through the same adhesion set by Remark~\ref{rem_adhSeps}.
	We again replace every such subpath, where the end vertices lie in a common adhesion set and the inner vertices lie outside of~$V_t$, by a geodesic in~$G[V_t]$ to obtain a walk $P'$ in $G[V_t]$.
	Since $P$ and $P'$ have the same endpoints and $P'$ must meet $S$, we know that $P$ either contains a vertex in $S$, or it contains a vertex in an adhesion set which meets $S$. 
	Let $S'$ be the set containing all vertices of~$S$ and all vertices contained in adhesion sets that meet~$S$. 
	There are only finitely many orbits of vertices in adhesion sets, hence there is an upper bound on the diameter of the adhesion sets. 
	Since $S$ is finite and $G$ is locally finite, this implies that $S'$ is finite. 
	By definition, there is no path in $G-S'$ connecting vertices in different components of $G[V_t]-S$. 
	In particular, $S'$ separates every ray in $\omega_t$ from every ray in $\omega_t'$, and hence \ref{itm_cte_4} holds.
\end{proof}

\section{Tree amalgamations}
\label{Sec_Splitting}

In this section, we prove our main result, Theorem~\ref{stallingsgraphIntro}.
But before we move on to that proof, we first have to state some definitions, in particular, the main definition: \ta s, a notion introduced by Mohar~\cite{Mohar06}.
After we stated those definitions, we compare \ta s and connected basic \td s.

For the definition of \ta s, let~$G_1$ and~$G_2$ be graphs. 
Let $(S_k^i)_{k \in I_i}$ be a family of subsets of~$V(G_i)$.
Assume that all sets~$S_k^i$ have the same cardinality and that the index sets $I_1$ and $I_2$ are disjoint.
For all $k \in I_1$ and $\ell \in I_2$, let $\phi_{k \ell} \colon S_k^1 \rightarrow S_\ell^2$ be a bijection and let $\phi_{\ell k} = \phi_{k\ell}^{-1}$. 
We call the maps $\phi_{k\ell}$ and $\phi_{\ell k}$ \emph{bonding maps}.

Let~$T$ be a \emph{$(|I_1|,|I_2|)$-semiregular} tree, that is, a tree in which for the canonical bipartition~$\{V_1,V_2\}$ of~$V(T)$  the vertices in~$V_i$ all have degree~$|I_i|$. 
Denote by $D(T)$ the set obtained from the edge set of $T$ by replacing every edge $xy$ by two directed edges $\fwd {xy}$ and $\fwd {yx}$. 
For a directed edge $\fwd{e} = \fwd{xy} \in D(T)$, we denote by $\bwd e = \fwd{yx}$ the edge with the reversed orientation.
Let $f\colon D(T) \to I_1 \cup I_2$ be a labelling, such that for every $t \in V_i$, the labels of edges starting at $t$ are in bijection to $I_i$.

For every $i\in\{1,2\}$ and for every $t \in V_i$, take a copy~$G_t$ of the graph~$G_i$. 
Denote by~${S^t_k}$ the corresponding copies of~$S_k^i$ in~$V(G_t)$. 
Let us take the disjoint union of the graphs $G_t$ for all $t \in V(T)$.
For every edge $\fwd e = \fwd{st}$ with $f(\fwd e) = k$ and $f(\bwd e) = \ell$ we identify each vertex $x$ in the copy of $S^{s}_k$ with the vertex $\phi_{k\ell}(x)$ in $S^{t}_\ell$.
Note that this does not depend on the orientation we pick for $\fwd e$, since $\phi_{\ell k} = \phi_{k\ell}^{-1}$.
The resulting graph is called the \emph{tree amalgamation} of the graphs~$G_1$ and~$G_2$ over the \emph{connecting tree}~$T$ and is denoted by $G_1\ast G_2$ or by $G_1\free_T G_2$ if we want to specify the tree.

In the context of tree amalgamations the sets $S_k^i$ are called the \emph{adhesion sets} of the tree amalgamation. 
More specifically, the sets $S_k^1$ are the adhesion sets of~$G_1$ and the sets $S_k^2$ are the adhesion sets of~$G_2$. 
If the adhesion sets of a tree amalgamation are finite, then this tree amalgamation has \emph{finite adhesion}. 
We call a \ta\ $G_1\ast_T G_2$ \emph{trivial} if for some $t\in V(T)$ the canonical map that maps the vertices $x\in V(G_t)$ to the vertices of $G_1\ast_T G_2$ that is obtained from~$x$ by all the identifications is a bijection.
Note that if the \ta\ has finite adhesion, it is trivial if $V(G_i)$ is the only adhesion set of~$G_i$ and $|I_i|=1$ for some $i \in \{1,2\}$.

We remark that the map described in the definition of a trivial \ta\ does not induce a graph isomorphism $G_t\to G_1\ast_T G_2$: it is a bijection $V(G_t)\to V(G_1\ast_T G_2)$ but need not induce a bijection $E(G_t)\to E(G_1\ast_T G_2)$.

The \emph{identification length} of a vertex $x\in V(G_1\free_T G_2)$ is the diameter of the subtree $T'$ of~$T$ induced by all nodes $t$ for which a vertex of $G_t$ is identified with $x$.
The \emph{identification length} of the \ta\ is the supremum of the identification lengths of its vertices.
The \ta\ has \emph{finite identification length} if the identification length is finite.

We remark that in Mohar's definition of a \ta\ \cite{Mohar06} the identification length is always at most~$2$.
But apart from this, our definition is equivalent to his.

It is worth noting that every tree amalgamation gives rise to a tree decomposition in the following sense.
\begin{remark}\label{TA_implies_TD}
Let $G$ be a graph.
If $G$ is a \ta\ $G_1\ast_T G_2$ of finite adhesion, then there is a naturally defined \td\ of~$G$: for $t\in V(T)$ let $V_t$ be the set obtained from $V(G_t)$ after all identifications in $G_1\ast G_2$.
Set $\Vcal:=\{V_t\mid t\in V(T)\}$.
Obviously, all vertices of~$G$ lie in $\bigcup_{t\in V(T)}V_t$ and for each edge there is some $V_t\in\Vcal$ containing~it.
Property (T3) of a \td\ is satisfied as the copies $G_i^v$ are arranged in a treelike way and as identifications to obtain a vertex take place in subtrees of~$T$.
So $(T,\Vcal)$ is a \td.
If $G_1\ast_T G_2$ has finite adhesion, so does $(T,\Vcal)$.
If the \ta\ is non-trivial, then $T$ has at least two ends and so does $G$.
Also, $(T,\Vcal)$ distinguishes two ends of~$G$: those that are captured by ends of~$T$.
\end{remark}

So far, the \ta s do not interact with any group actions on $G_1$ and~$G_2$.
In particular, it is easy to construct a \ta\ of two \qt\ graphs that is not \qt: e.\,g.\ take as~$G_1$ a double ray and as~$G_2$ a finite non-trivial graph. Let $G_1$ have precisely two adhesion sets and $G_2$ at least two, all of size~$1$. The \ta\ $G_1\ast G_2$ is not \qt.

In the following, we describe some conditions on \ta s which will ensure that \ta s of \qt\ graphs are again \qt, see Lemma~\ref{lem_TAareQT}.

Let $\Gamma_i$ be a group acting on~$G_i$ for $i=1,2$, let $t \in V_i$, let $\gamma \in \Gamma_i$ and let $j\in\{1,2\}\sm\{i\}$.
We say that the \ta\ \emph{respects $\gamma$}, if there is a permutation $\pi$ of $I_i$ such that for every $k \in I_i$ there is $\ell \in I_j$ such that 
\[
\phi_{k\ell} = \phi_{\pi(k)\ell} \circ \gamma\mid_{S_k}.
\]
Note that this in particular implies that $\gamma(S_k) = S_{\pi(k)}$.
The \ta\ \emph{respects $\Gamma_i$} if it respects every $\gamma \in \Gamma_i$.

Let $k\in I_i$ and let $\ell,\ell'\in I_j$. 
We call the bonding maps from $k$ to $\ell$ and $\ell'$ \emph{consistent} if there is $\gamma \in \Gamma_j$ such that 
\[
    \phi_{k\ell} = \gamma \circ \phi_{k\ell'}.
\]
We say that the bonding maps between two sets $J_1 \subseteq I_1$ and $J_2\subseteq I_2$ are \emph{consistent}, if they are consistent for any $i \in \{1,2\}$, $k \in J_i$, and $\ell,\ell' \in J_j$.

We say that the \ta\ $G_1\ast G_2$ is of \emph{Type 1 respecting the actions of~$\Gamma_1$ and $\Gamma_2$} or \emph{$(G_1,\Gamma_1)\ast(G_2,\Gamma_2)$ is a \ta\ of Type 1} for short if the following holds:
\begin{enumerate}[label=(\roman*)]
\item \label{itm:T1respaction} The \ta\ respects $\Gamma_1$ and $\Gamma_2$.
\item \label{itm:T1esim} The bonding maps between $I_1$ and $I_2$ are consistent.
\end{enumerate}

We say that the \ta\ $G_1\ast G_2$ is of \emph{Type 2 respecting the actions of~$\Gamma_1$ and $\Gamma_2$} or \emph{$(G_1,\Gamma_1)\ast(G_2,\Gamma_2)$ is a \ta\ of Type 2} for short if the following holds:
\begin{enumerate}[label=(\roman*)]
\item[(o)] \label{itm:T2prelim} $G_1=G_2=:G$, $\Gamma_1=\Gamma_2=:\Gamma$, and  $I_1=I_2=:I$,\footnote{Technically this is not allowed, in particular since for the definition of $\phi_{k\ell}$ we needed $I_1$ and $I_2$ to be disjoint. These technicalities can be easily dealt with by an appropriate notion of isomorphism the details of which we leave to the reader.}
and there is $J \subseteq I$ such that $f(\fwd e) \in J$, if and only if $f(\bwd e) \notin J$.
\item \label{itm:T2respaction} The \ta\ respects $\Gamma$.
\item \label{itm:T2esim} The bonding maps between $J$ and $I\setminus J$ are consistent.
\end{enumerate}
In this second case also say that $G_1\ast G_2 = G \ast G$ is a \emph{\ta\ of~$G$ with itself}.

We say that $G_1\ast G_2$ is a \ta\ \emph{respecting the actions of $\Gamma_1$ and $\Gamma_2$} if it is of either Type~1 or Type~2 respecting the actions $\Gamma_1$ and $\Gamma_2$ and we speak about the \emph{\ta} $(G_1,\Gamma_1)\ast(G_2,\Gamma_2)$.

Note that conditions \ref{itm:T1respaction} and \ref{itm:T1esim} in both cases do not depend on the specific labelling of the tree.
This is no coincidence. 
In fact we will show that any two legal labellings of $D(T)$ give isomorphic \ta s, see Lemma~\ref{lem_TAareQT}.
Furthermore, any $\gamma \in \Gamma_i$ (interpreted as an isomorphism between parts of two such \ta s) can be extended to an isomorphism of the \ta s, which also implies that the \ta s obtained this way are always \qt .

Before we turn to the proof of these facts, we need some notation.
A \emph{legally labelled star centred at $V_i$} is a function $\ell$ from $I_i$ to $I_j$.
If the \ta\ is of Type 2, we further require that $\ell (k) \in J$ if and only if $k \notin J$.
Informally, think of this as a star whose labels on directed edges could appear on a subtree of $T$ induced by a vertex $t \in V_i$ and its neighbours:
for $\fwd e$ with label $k$, the value $\ell(k)$ tells us the label of $\bwd e$.

An isomorphism of two legally labelled stars $\ell, \ell'$ is a triple $(\gamma,\pi,(\gamma_k)_{k\in I_i})$ consisting of some $\gamma \in \Gamma_i$, a permutation $\pi$ of $I_i$, and a family $(\gamma_k)_{k \in I_i}$ of elements of $\Gamma_j$ such that for every $k \in I_i$
\[
    \phi_{k,\ell(k)} = \gamma_k \circ \phi_{\pi(k)\ell'(\pi(k))}\circ \gamma \mid_{S_k}.
\]
In our interpretation of legally labelled stars as subtrees of $T$, this corresponds to an isomorphism of the corresponding subgraphs of the \ta .

\begin{prop}
    \label{prop:isomorphicstars}
    Let $\ell, \ell'$ be two legally labelled stars with respect to a \ta\ $(G_1,\Gamma_1)\ast_T(G_2,\Gamma_2)$ centred at $V_i$ and let $\gamma \in \Gamma_i$. Then $\gamma$ extends to an isomorphism $(\gamma',\pi,(\gamma_k)_{k\in I_i})$ of $\ell$ and $\ell'$.
    Furthermore, if we are given $\tilde k, \tilde k' \in I_i$ and $\tilde \gamma_k \in \Gamma_j$ such that
    \[
    \phi_{\tilde k,\ell(\tilde k)} = \tilde \gamma_k \circ \phi_{\tilde k'\ell'(\tilde k')}\circ \gamma \mid_{S_k},
    \]
    then we can choose $\pi(\tilde k) = \tilde k'$ and $ \gamma_{\tilde k} = \tilde \gamma_k$.
\end{prop}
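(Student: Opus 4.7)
The approach is to combine the two ingredients in the definition of a \ta\ respecting the group actions. The hypothesis that the \ta\ respects~$\gamma$ supplies a permutation~$\pi$ of~$I_i$ together with, for every $k\in I_i$, a witness label $m_k\in I_j$ satisfying $\phi_{k,m_k}=\phi_{\pi(k),m_k}\circ\gamma|_{S_k}$. Consistency of the bonding maps, applied once at the source $k$ and once at the source $\pi(k)$, will then let me transport these equations to the specific labels $\ell(k)$ and $\ell'(\pi(k))$ at the cost of pre-composing with a $\Gamma_j$-element, which I will package as the desired $\gamma_k$.

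Explicitly, consistency produces $\alpha_k,\beta_k\in\Gamma_j$ with $\phi_{k,\ell(k)}=\alpha_k\circ\phi_{k,m_k}$ and $\phi_{\pi(k),m_k}=\beta_k\circ\phi_{\pi(k),\ell'(\pi(k))}$, and chaining the three equations gives
\[
\phi_{k,\ell(k)}=\alpha_k\circ\phi_{k,m_k}=\alpha_k\circ\phi_{\pi(k),m_k}\circ\gamma|_{S_k}=\alpha_k\beta_k\circ\phi_{\pi(k),\ell'(\pi(k))}\circ\gamma|_{S_k}.
\]
Setting $\gamma_k := \alpha_k\beta_k$ therefore turns $(\gamma,\pi,(\gamma_k)_{k\in I_i})$ into an isomorphism of the two legally labelled stars. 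In Type~1 the consistency assumed between $I_1$ and $I_2$ suffices for both invocations. In Type~2 I need the three labels $m_k,\ell(k),\ell'(\pi(k))$ to lie on the correct side of the partition $\{J,I\sm J\}$ for consistency to apply; legality of $\ell,\ell'$ handles $\ell(k)$ and $\ell'(\pi(k))$ once $\pi$ is arranged to preserve the bipartition, and $m_k$ can be chosen on the required side.

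For the \emph{furthermore} clause, the additional equation forces $\gamma(S_{\tilde k})=S_{\tilde k'}$, and since any $\pi$ delivered by the respecting property satisfies $\gamma(S_{\tilde k})=S_{\pi(\tilde k)}$, we have $S_{\pi(\tilde k)}=S_{\tilde k'}$. Transposing the indices $\pi(\tilde k)$ and $\tilde k'$ in $\pi$ therefore yields another witnessing permutation that now satisfies $\pi(\tilde k)=\tilde k'$. Running the main construction with this adjusted $\pi$ produces some $\gamma_{\tilde k}$ satisfying $\phi_{\tilde k,\ell(\tilde k)}=\gamma_{\tilde k}\circ\phi_{\tilde k',\ell'(\tilde k')}\circ\gamma|_{S_{\tilde k}}$; comparing with the hypothesis and using that $\phi_{\tilde k',\ell'(\tilde k')}\circ\gamma|_{S_{\tilde k}}$ is a bijection yields $\gamma_{\tilde k}=\tilde\gamma_k$. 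The main obstacle I expect is the fine control of the permutation~$\pi$: both the bipartition preservation in Type~2 and the reassignment $\pi(\tilde k)=\tilde k'$ rely on the observation that $\pi$ is pinned down by the respecting equations only up to permutations of indices with coinciding adhesion sets, so one must be free to swap such indices without violating the respecting property at the remaining $k$.
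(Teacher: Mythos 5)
Your handling of the first part matches the paper's proof: you decompose the problem using the respecting-property permutation $\pi$ and two invocations of consistency (one to relate $\phi_{k,\ell(k)}$ to $\phi_{k,m_k}$ and one to relate $\phi_{\pi(k),m_k}$ to $\phi_{\pi(k),\ell'(\pi(k))}$), setting $\gamma_k$ to be the composition of the two correction terms. That is precisely the paper's argument, with your $m_k$ playing the role of the paper's $\bar\ell(k)$.

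The second part has a genuine gap. You claim that transposing $\pi(\tilde k)$ and $\tilde k'$ in $\pi$ ``yields another witnessing permutation'' for the respecting property, justified by the equality of adhesion sets $S_{\pi(\tilde k)} = S_{\tilde k'}$ and the heuristic that $\pi$ is determined only up to swaps of indices with coinciding adhesion sets. But the respecting property is a condition on the bonding maps, not merely the adhesion sets: it demands some $m$ with $\phi_{\tilde k, m} = \phi_{\tilde k', m}\circ\gamma\mid_{S_{\tilde k}}$, and nothing in the hypotheses forces $\phi_{\pi(\tilde k), m}$ and $\phi_{\tilde k', m}$ to coincide just because their domains do. Consistency is of no help here either, since it only compares bonding maps with the \emph{same} source index. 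So the modified permutation may fail the respecting property, and you cannot simply rerun the first part with it. The paper instead takes the isomorphism $(\gamma,\pi,(\gamma_k))$ produced by the first part, replaces $\pi$ by $\tau = \pi\circ(\tilde k\ \tilde k'')$ with $\tilde k'' = \pi^{-1}(\tilde k')$, sets $\delta_{\tilde k}=\tilde\gamma_k$, and explicitly computes the correction $\delta_{\tilde k''} = \gamma_{\tilde k''}\circ\tilde\gamma_k^{-1}\circ\gamma_{\tilde k}$ needed at the other affected index, verifying the isomorphism equations directly; this sidesteps the respecting property altogether. As a smaller point, your ``cancellation'' argument for $\gamma_{\tilde k}=\tilde\gamma_k$ only yields agreement on $S_{\ell'(\tilde k')}$, not equality in $\Gamma_j$ — but this is moot, because one simply \emph{defines} $\gamma_{\tilde k}\coloneq\tilde\gamma_k$ and uses the hypothesis to check the required equation holds.
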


\begin{proof}
Since the \ta\ respects $\gamma$, there are $\pi$ and $\bar\ell\colon I_i \to I_j$ such that 
\[
    \phi_{k \bar \ell(k)} = \phi_{\pi(k) \bar \ell(k)} \circ \gamma\mid_{S_k}.
\]
Let $\gamma_k\in\Gamma_j$ be such that $\phi_{k\ell(k)} = \gamma_k \circ \phi_{k \bar \ell(k)}$, and let $\gamma'_k\in\Gamma_j$ be such that $\phi_{\pi(k) \bar\ell(k)} = \gamma'_k \circ \phi_{\pi(k) \ell'(\pi(k))}$. These exist by \ref{itm:T1esim}; for Type 2 recall that by the definition of legally labelled stars $k \in J$ if and only if $\ell(k) \notin J$. Now clearly
\[
    \phi_{k\ell(k)} = \gamma_k \circ \gamma'_k \circ \phi_{\pi(k) \ell'(\pi(k))} \circ \gamma \mid_{S_k},
\]
thus showing that the two stars are isomorphic.

For the second part, let $(\gamma', \pi,(\gamma_k)_{k \in I_i})$ be an isomorphism between $\ell$ and $\ell'$. Let $\tilde k'' = \pi^{-1}(\tilde k')$. Define $\tau(\tilde k) = \tilde k'$ and $\tau(\tilde k'') = \pi(\tilde k)$. Let $ \delta_{\tilde k} = \tilde \gamma_k$ and let
\[
    \delta_{\tilde k''} = \gamma_{\tilde k''} \circ \tilde \gamma_k^{-1} \circ \gamma_{\tilde k}.
\]
For the remaining $k \in I_i$, let $\tau(k) = \pi(k)$ and $\delta_k = \gamma_k$. It is straightforward to check that $\gamma$, $\tau$, and $(\delta_k)_{k \in I_i}$ define an isomorphism between $\ell$ and $\ell'$ with the desired properties.
\end{proof}

\begin{lemma}\label{lem_TAareQT}
Let $G_1$ and $G_2$ be connected locally finite graphs and let $\Gamma_i$ be a group acting \qt ly on~$G_i$ for $i=1,2$. Then the \ta\ $(G_1,\Gamma_1)\ast_T(G_2,\Gamma_2)$ is \qt\ and independent (up to isomorphism) of the particular labelling of $T$.
\end{lemma}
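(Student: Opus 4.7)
The plan is to prove both statements simultaneously from a single stronger claim: for any two legal labellings $f,f'$ of $D(T)$, any root $t_0 \in V(T)$ with $t_0 \in V_i$, and any $\gamma \in \Gamma_i$, there is a graph isomorphism $\Phi\colon H_f \to H_{f'}$ (where $H_f,H_{f'}$ denote the \ta s associated to $f,f'$) whose restriction to the image of $V(G_{t_0})$ equals $\gamma$. Taking $f=f'$ produces enough automorphisms to handle quasi-transitivity within a single copy, while allowing $f\neq f'$ at the same time delivers labelling independence.

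I would construct $\Phi$ by building a family $(g_t)_{t\in V(T)}$ with $g_t \in \Gamma_{i(t)}$ (where $t \in V_{i(t)}$) and declaring $\Phi([x]_f) := [g_t(x)]_{f'}$ for $x \in V(G_t)$. Well-definedness and bijectivity reduce to checking compatibility at each tree edge $st \in E(T)$, and this compatibility is exactly the formula defining isomorphism of legally labelled stars. The construction proceeds by BFS starting at $t_0$: set $g_{t_0} := \gamma$; having chosen $g_s$, apply the first part of Proposition~\ref{prop:isomorphicstars} to the stars at $s$ coming from $f$ and $f'$, with input element $g_s$, to obtain a family $(\gamma_k^{(s)})_{k \in I_{i(s)}}$; then set $g_t := \gamma_{f(\fwd{st})}^{(s)}$ for each child $t$ of $s$. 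The star-isomorphism formula then directly yields compatibility at the edge $st$.

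The main obstacle is that at every internal node the proposition has to be invoked twice: once from the parent (which forces the value of $g_t$) and once at $t$ itself to determine its children. For these two applications to produce a common, consistent star isomorphism at $t$, its component at the edge back to the parent must match the already-fixed $g_s$. This is exactly what the \emph{second part} of Proposition~\ref{prop:isomorphicstars} provides: one prescribes $\pi(f(\fwd{ts})) := f'(\fwd{ts})$ and $\gamma_{f(\fwd{ts})} := g_s$, and the hypothesis equation required by that second part coincides with the compatibility identity already established at edge $st$ in the previous BFS step, so the induction goes through. The same argument works in both Type~1 and Type~2 since the proposition is formulated uniformly.

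To finish off quasi-transitivity, specialising to $f=f'$ the above produces, for every $t_0$ and every $\gamma \in \Gamma_{i(t_0)}$, an automorphism of $H := H_f$ restricting to $\gamma$ on $V(G_{t_0})$. To move between different copies of the same $G_i$, I would use that $T$ is $(|I_1|,|I_2|)$-semiregular, so $\Aut(T)$ is transitive on each part: for $t_0, t_1 \in V_i$ pick $\sigma\in\Aut(T)$ with $\sigma(t_0)=t_1$; then $f^\sigma := f \circ \sigma^{-1}$ is again a legal labelling, and $\sigma$ induces a natural isomorphism $H_f \to H_{f^\sigma}$. Composing it with an isomorphism $H_{f^\sigma} \to H_f$ from the labelling-independence claim (chosen to restrict to the identity on $V(G_{t_1})$) yields an automorphism of $H$ sending $V(G_{t_0})$ bijectively onto $V(G_{t_1})$. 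Combining with the intra-copy automorphisms, the orbits of $\Aut(H)$ on $V(H)$ are bounded by $|V(G_1)/\Gamma_1|+|V(G_2)/\Gamma_2|$, which is finite.
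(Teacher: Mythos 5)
Your plan follows the same route as the paper's proof: fix a root, build a family $(g_t)$ by breadth-first search using Proposition~\ref{prop:isomorphicstars}, and invoke the second part of that proposition to force consistency with the value already assigned at the parent edge. The $\Aut(T)$ detour you use to move between copies of the same $G_i$ is a valid alternative to the paper's device of allowing the claim to be parameterised by a pair of nodes $t,t'$ rather than a single root, and your orbit count is correct. So the overall architecture is sound.

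There is, however, a real oversight in the formula $\Phi([x]_f) := [g_t(x)]_{f'}$ for $x\in V(G_t)$, which silently assumes that $\Phi$ carries the copy at node $t$ of $H_f$ to the copy at the same node $t$ of $H_{f'}$. But the isomorphism of legally labelled stars at a node $s$ comes with a nontrivial permutation $\pi^{(s)}$ of $I_{i(s)}$, and the star-isomorphism identity $\phi_{k,\ell(k)} = \gamma_k \circ \phi_{\pi^{(s)}(k),\ell'(\pi^{(s)}(k))}\circ g_s|_{S_k}$ relates the $f$-labelled adhesion set $S_k$ at $s$ to the $f'$-labelled adhesion set $S_{\pi^{(s)}(k)}$ at $s$ --- not to $S_{f'(\fwd{st})}$. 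The second part of Proposition~\ref{prop:isomorphicstars} lets you prescribe \emph{one} value of $\pi^{(s)}$ (the one pointing back to the parent), but not the values on the remaining edges, so in general $\pi^{(s)}(f(\fwd{st}))\neq f'(\fwd{st})$ for the children $t$ of $s$, and well-definedness of your $\Phi$ at the edge $st$ fails. The permutations $\pi^{(s)}$ force a tree self-isomorphism $\sigma\colon T\to T$ with $\sigma(t_0)=t_0$ to be built alongside the $(g_t)$: one must compare the $f$-star at $s$ with the $f'$-star at $\sigma(s)$, set $\sigma(t)$ to the neighbour of $\sigma(s)$ whose incoming $f'$-label is $\pi^{(s)}(f(\fwd{st}))$, and define $\Phi$ by sending $G_t$ in $H_f$ to $G_{\sigma(t)}$ in $H_{f'}$ via $g_t$. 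This is precisely what the paper's proof does (it constructs the tree isomorphism, denoted $\pi\colon T\to T'$, in the same breadth-first pass), so the gap is repairable within your framework, but it is not a cosmetic omission: without $\sigma$ the stated map $\Phi$ is not well defined.
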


\begin{proof}
Let $T$ and $T'$ be two labelled trees giving rise to tree amalgamations $G=(G_1,\Gamma_1)\ast_T(G_2,\Gamma_2)$ and $G=(G_1,\Gamma_1)\ast_{T'}(G_2,\Gamma_2)$, respectively, such that the adhesion sets as well as the bonding maps for both \ta s are the same.
Let $t \in V(T)$ and let $t'\in V(T')$ such that $G_t$ and $G_{t'}$ are both isomorphic to $G_i$. Let $\gamma_t \in \Gamma_i$. 
We claim that there is an isomorphism $\bar \gamma\colon G\to G'$ such that 
\[
\bar\gamma \mid_{G_t} = \id_{t'} \circ \gamma_t \circ \id_t^{-1},
\]
where $\id_t$ and $\id_{t'}$ denote the canonical isomorphisms from $G_i$ to $G_t$ and $G_{t'}$ respectively. Clearly, the lemma follows from this claim. 

For the proof of the claim define the star around $s \in V(T)$ by the map $\ell_s$ mapping $k$ to the label of $\bwd {e_k}$, where $\fwd{e_k}$ is the unique edge with label $k$ starting at $s$.
By Proposition \ref{prop:isomorphicstars}, there are a bijection $\pi\colon N(t) \to N(t')$ and a family $(\gamma_s \in \Gamma_j)_{s \in N(t)}$ which extend $\gamma_t$ to an isomorphism of the stars around $t$ and $t'$.
Iteratively apply Proposition \ref{prop:isomorphicstars} to vertices at distance $n=\{1,2,3,\dots\}$ from~$t$. 
We obtain an isomorphism $\pi\colon T\to T'$ and maps $\gamma_s \in \Gamma_i$ for each $s \in V_i$ such that the restriction of $\pi$ to $s$ and its neighbours and the corresponding maps $\gamma_x$ form an isomorphism between the stars at $s$ and $\pi(s)$. 

For $v \in V(G_s)$, define $\bar \gamma(v) = \id_{\pi(s)} \circ \gamma_s \circ \id_s^{-1} (v)$. 
Note that for any edge $ss'$ the two concurring definitions given for vertices of~$G_s$ and~$G_{s'}$ that get identified for the \ta\ coincide.
Hence $\bar \gamma$ is well defined, and since it obviously maps edges to edges and non-edges to non-edges, it is the desired isomorphism.
\end{proof}

A closer inspection of the proof of Lemma \ref{lem_TAareQT} together with Remark \ref{TA_implies_TD} shows that \ta s respecting the actions of \qt\ groups give rise to basic \td s of $(G_1,\Gamma_1)\ast(G_2,\Gamma_2)$.
The following lemma shows that the converse also holds, that is, basic \td s of \qt\ graphs give rise to \ta s respecting the actions of some \qt\ group on the parts.

\begin{lemma}
	\label{TD_implies_TA}
	Let~$\G$ be a group acting \qt ly on a connected locally finite graph~$G$ and let ${(T,\Vcal)}$ be a connected basic \td\ of~$G$ with respect to~$\G$.
	Then one of the following holds.
	\begin{enumerate}[label=(\arabic*)]
	\item There are $V_t, V_{t'} \in \Vcal$ such that $G$ is a non-trivial \ta\ \[G[V_t] \free_T G[V_{t'}]\]
	of Type 1 respecting the actions of the stabilisers of $G[V_t]$ and $G[V_{t'}]$ in~$\Gamma$. 
	\item There is $V_t\in\Vcal$ such that $G$ is a non-trivial \ta\ \[G[V_t]\ast_TG[V_t]\]
	of Type 2 respecting the actions of the stabiliser of $G[V_t]$ in~$\Gamma$.
	\end{enumerate} 
\end{lemma}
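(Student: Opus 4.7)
The plan is to build the \ta\ data directly from the basic \td\ $(T,\Vcal)$. Since $(T,\Vcal)$ is basic, $\Gamma$ acts on $E(T)$ with a single orbit, and hence on $V(T)$ with either two orbits (which will yield a Type~1 \ta) or a single orbit (which will yield a Type~2 \ta). Transitivity on $E(T)$ guarantees that all adhesion sets of $(T,\Vcal)$ have the same finite size. Moreover, since $(T,\Vcal)$ distinguishes at least two ends of~$G$ and has bounded adhesion, the tree $T$ itself has at least two ends, which combined with the single edge-orbit assumption forces every vertex of~$T$ to have degree at least~$2$; this in turn will yield non-triviality of the resulting \ta.

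In the two-orbit case, pick adjacent representatives $t_1\in V_1$ and $t_2\in V_2$ of the two vertex orbits and set $G_i:=G[V_{t_i}]$, $\Gamma_i:=\Gamma_{V_{t_i}}$, $I_i:=N_T(t_i)$, and $S_k^i:=V_{t_i}\cap V_k$ for $k\in I_i$. For every $s\in V(T)$ in the $\Gamma$-orbit of $t_i$, fix $\alpha_s\in\Gamma$ with $\alpha_s(t_i)=s$, taking $\alpha_{t_1}=\alpha_{t_2}=\id$. These give canonical isomorphisms $\alpha_s\colon G[V_{t_i}]\to G[V_s]$ and suggest the labelling $f\colon D(T)\to I_1\sqcup I_2$ defined by $f(\fwd{st}):=\alpha_s^{-1}(t)$, which gives a bijection from the outgoing edges at any node to the appropriate $I_i$. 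For each pair $(k,\ell)\in I_1\times I_2$ that arises as $(f(\fwd{st}),f(\bwd{st}))$ for some edge of~$T$, pick a representative edge $\fwd{st}$ and set $\phi_{k\ell}(y):=\alpha_t^{-1}\alpha_s(y)$ for $y\in S_k^1$; a routine check, using $\alpha_s(V_{t_1}\cap V_k)=V_s\cap V_t$ and $\alpha_t^{-1}(V_s\cap V_t)=V_{t_2}\cap V_\ell$, shows that this is a bijection $S_k^1\to S_\ell^2$. For pairs that do not arise, fix any bijection.

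The verification splits into two halves. First, to see that $(G_1,\Gamma_1)\ast_T(G_2,\Gamma_2)$ is isomorphic to~$G$, send a vertex $v$ in the copy of $G_i$ at node $s$ to $\alpha_s(v)\in V_s\subseteq V(G)$; this respects edges within each $G_s$ since $\alpha_s$ is an automorphism of~$G$, and agrees with the \ta\ identifications on every representative edge by construction, while Lemma~\ref{lem_TAareQT} absorbs the discrepancies coming from non-representative edges. Second, one checks the Type~1 conditions: consistency of bonding maps \ref{itm:T1esim} reduces, for fixed $k\in I_1$ and varying $\ell,\ell'\in I_2$, to the observation that two representative edges sharing the outgoing label $k$ differ by an element of $\Gamma$ which, after conjugating by the $\alpha$'s, lies in $\Gamma_2$; and the respecting condition \ref{itm:T1respaction} follows because any $\gamma\in\Gamma_i$ fixes the node $t_i$ of~$T$ (as $(T,\Vcal)$ is $\Gamma$-invariant) and hence permutes $I_i=N_T(t_i)$, this permutation being compatible with the $\phi_{k\ell}$ via the same $\alpha$-bookkeeping.

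The single-orbit case is handled analogously with $G_1=G_2:=G[V_{t_0}]$, $\Gamma_1=\Gamma_2:=\Gamma_{V_{t_0}}$, and $I_1=I_2:=N_T(t_0)$. The required subset $J\subseteq I$ with $f(\fwd e)\in J$ iff $f(\bwd e)\notin J$ comes from the $\Gamma$-orbit structure on~$D(T)$: the $\alpha_s$ can be chosen so that the outgoing and incoming labels of each edge of~$T$ lie in complementary halves of $N_T(t_0)$, and the rest of the argument mirrors Type~1. The main obstacle throughout is the bookkeeping around the bonding maps: the natural formula $\phi_{k\ell}(y)=\alpha_t^{-1}\alpha_s(y)$ depends on the choice of representative edge and is pinned down only up to an element of a stabiliser. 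This ambiguity is precisely the content of the consistency conditions \ref{itm:T1esim}/\ref{itm:T2esim}, and the cleanest way to manage it is to invoke Lemma~\ref{lem_TAareQT} (and its underlying Proposition~\ref{prop:isomorphicstars}), which guarantees that the \ta\ depends only on its combinatorial data up to isomorphism and hence absorbs all such ambiguity into the global isomorphism with~$G$.
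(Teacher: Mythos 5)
Your overall plan is the same as the paper's: read the amalgamation data off the basic \td, with the dichotomy between Type~1 and Type~2 governed by whether $\G$ has one or two vertex orbits on~$T$, and use elements $\alpha_s\in\G$ carrying a reference node to~$s$ both as the canonical isomorphisms $G_i\to G[V_s]$ and to manufacture the labelling and bonding maps. The framework is right, but there is a genuine gap at the step you try to bury in an appeal to Lemma~\ref{lem_TAareQT}.

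The problem is that you pick the $\alpha_s$ arbitrarily, and then want the map that sends $v$ in the copy of~$G_i$ at node~$s$ to $\alpha_s(v)$ to descend to the amalgamation. For that to even be well defined, you need $\alpha_t^{-1}\alpha_s=\phi_{k\ell}$ on $S_k^1$ for \emph{every} edge $\fwd{st}$ carrying labels $(k,\ell)$, not just for the chosen representative $\fwd{s_0t_0}$; for a non-representative edge nothing guarantees $\alpha_t^{-1}\alpha_s=\alpha_{t_0}^{-1}\alpha_{s_0}$ on the relevant adhesion set. Lemma~\ref{lem_TAareQT} does not absorb this: it tells you the \emph{amalgamation} is independent of the labelling, but your difficulty is that the proposed map from the amalgamation to~$G$ is not a function at all. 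The paper fixes exactly this by building the family of automorphisms (its $\gamma_e$, hence its $\id_v$) inductively outward from a base edge $e_0$, so that each new $\alpha_s$ is the previous one composed with a single ``star'' automorphism $\delta_e$; this compatibility along $T$ is what makes $\id_v^{-1}\circ\id_u=\phi_{k\ell}$ hold on every edge and is the real content of the proof.

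Two smaller points. First, in the single-orbit case you assert ``the $\alpha_s$ can be chosen so that the outgoing and incoming labels of each edge lie in complementary halves of $N_T(t_0)$''. This is false in general if $\G$ has a single orbit on $D(T)$ (i.e.\ some element reverses an edge), and is precisely why the paper passes to the index-$2$ bipartition-preserving subgroup $\Gamma'$ before setting up orientations; one then checks that this replacement does not change the stabilisers $\Gamma_i$. You need to make this explicit, not just hope the $\alpha_s$ can be chosen well. Second, defining $\phi_{k\ell}$ by ``fix any bijection'' for non-arising pairs threatens the consistency condition~\ref{itm:T1esim}/\ref{itm:T2esim} and the ``respects $\G_i$'' condition, which quantify over all pairs in $I_1\times I_2$; the paper avoids this by giving a uniform group-theoretic formula $\phi_{k\ell}=\id_t^{-1}\circ\gamma_\ell\circ\gamma_k^{-1}\circ\id_s$ defined for every $(k,\ell)$.
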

\begin{proof}
    Choose an oriented edge $\fwd {e_0} \in D(T)$. 
    We say that $\fwd e \in D(T)$ is positively oriented, if there is $\gamma \in \Gamma$ mapping $\fwd{e_0}$ to $\fwd e$. 
    Otherwise we say that $\fwd e$ is negatively oriented. 
    If $\Gamma$ contains an element that reverses an edge of $T$, then let $\Gamma'$ be the subgroup preserving the bipartition of $T$.
    This subgroup has index $2$, and still acts \qt ly on $G$ and transitively on edges of $T$.
    Hence we can without loss of generality assume that no element of $\Gamma$ swaps the endpoints of an edge, and thus every edge is either positively or negatively oriented, but not both.
    
    Let $s$ and $t$ be the start and end point of $\fwd{e_0}$ respectively.
    Let $(\fwd{e_k})_{k \in K}$ be the positively oriented edges starting at $s$ and let $(\fwd{e_\ell})_{\ell \in L}$ be the negatively oriented edges starting at $t$.
    Without loss of generality, assume that $K$ and $L$ are disjoint, and that $\fwd {e_0} = \fwd{e_{k_0}} = \bwd{e_{\ell_0}}$.
    For every $k \in K$ pick $\gamma_k \in \Gamma$ which maps $\fwd {e_0}$ to $\fwd{e_k}$ (with $\gamma_{k_0} = \id$).
    For every $\ell \in L$ pick $\gamma_\ell \in \Gamma$ which maps $\bwd{e_0}$ to $\fwd {e_\ell}$ (with $\gamma_{\ell_0} = \id$).
    If there is an element of $\Gamma$ that maps $s$ to $t$, then fix such an automorphism $\gamma_{st}$, and for $k \in K, \ell \in L$ let $\gamma'_k = \gamma_k \circ \gamma_{st}$ and $\gamma'_\ell = \gamma_{\ell} \circ \gamma_{st}^{-1}$.
    
    Note that $e_0$ can be mapped to any edge incident to $e_0$ by a unique element of the form $\gamma_k$ or $\gamma'_k$ for some $k \in K \cup L$.
    For an arbitrary edge $e$, let $e'$ be the first edge of the path connecting $e$ to $e_0$.
    If $\gamma_{e'} \in \Gamma$ maps $e_0$ to $e'$, then by the above remark there is a unique element $\delta_{e}$ of the form $\gamma_k$ or $\gamma'_k$ such that $\gamma_e \circ \delta_e$ maps $e_0$ to $e$.
    Use this to inductively construct (starting from $\delta_{e_0} = \id$) for each $e \in E(T)$ an automorphism $\gamma_e \in \Gamma$ such that $\gamma_e(e_0) = e$.
    Let $\fwd e$ be the orientation of $e$ pointing away from $e_0$ if $e\neq e_0$ and $\fwd e=\fwd{e_0}$ otherwise.
    Define the label $f(\fwd e$) to be the unique $k \in K \cup L$ such that the $\delta_e$ from above is $\gamma_k$ or $\gamma_k'$.
    Note that $k \in K$ if and only if $\fwd e$ is positively oriented.
    In this case define $f(\bwd e) = \ell_0$, otherwise define $f(\bwd e) = k_0$.
    
    The following observation will be useful later. 
    Let $v$ be a vertex of $T$, and let $\fwd e$ be the first edge of the path from $v$ to $e_0$ (in case $v$ is $s$ or $t$ this is an orientation of~$e_0$).
    Let $\Delta_v =\{\delta_f\mid v \in f, f \neq e\}$.
    \begin{itemize}
        \item If all edges starting at $v$ are positively (resp.\ negatively) oriented, then $\Delta_v = \{\gamma_k \mid k_0 \neq k \in K\}$ (resp.\ $\Delta_v = \{\gamma_\ell \mid \ell_0 \neq \ell \in L\})$.
        \item Otherwise, if $\fwd e$ is positively (resp.\ negatively) oriented, then $\Delta_v = \{\gamma_k, \gamma'_\ell \mid k_0 \neq k \in K, \ell \in \ell\}$ (resp.\ $\Delta_v = \{\gamma_k',\gamma_\ell \mid k \in K, \ell_0 \neq \ell \in L\})$.
    \end{itemize}
    In particular, taking into account the label of $\fwd e$, in the first case the edges starting at $v$ are labelled bijectively by $K$ (resp.\ L), while in the second case they are labelled bijectively by $K \cup L$.

    Next we show how this labelling defines a \ta . 
    First assume there is no automorphism $\gamma \in \Gamma$ mapping $s$ to $t$.
    Then all positively oriented edges must point from $V_1$ to $V_2$, where $V_1 \cup V_2$ is the bipartition of $T$ with $s \in V_1$---this corresponds to the first case in the above observation.
    Let $G_1$ be isomorphic to $G[V_s]$, and let $G_2$ be isomorphic to $G[V_t]$.
    Let $\id_s$ and $\id_t$ be the respective isomorphisms.
    
    For the definition of the adhesion sets let $I_1 = K$ and $I_2 = L$.
    For $k \in K$ let $t_k$ be the endpoint of $\fwd{e_k}$ and define $S_k = \id_s^{-1}(V_s \cap V_{t_k})$.
    Similarly, for $\ell \in L$, let $s_\ell$ be the endpoint of $\fwd{e_\ell}$ and define $S_k = \id_t^{-1}(V_t \cap V_{s_\ell})$. Finally, define the adhesion maps by
    $\phi_{k\ell} = \id_t^{-1} \circ \gamma_\ell \circ \gamma_k^{-1} \circ \id_s \mid_{S_k}$.
    
    The labels of directed edges starting at each vertex are in bijection to $K$ or $L$ depending on whether the vertex is in $V_1$ or $V_2$.
    Hence the above information together with the labelling defines a \ta .
    If $\Gamma_i$ is a group acting on $G_i$ in the same way as the setwise stabiliser (in $\Gamma$) of~$G_s$, of~$G_t$ acts on $G_s$, on $G_t$ respectively, then it is straightforward to verify that this \ta\ is of Type 1 respecting the actions. 
    Note that the possible replacement of~$\Gamma$ by~$\Gamma'$ changes neither $\Gamma_1$ nor~$\Gamma_2$.
    
    It only remains to show that the \ta\ is isomorphic to~$G$.
    Let $e_v$ be the first edge on the path from $v\in V(T)$ to~$e_0$.
    If $v\in V_1$, then set $\id_v = \gamma_{e_v} \circ \id_s$. 
    Otherwise set $\id_v = \gamma_{e_v} \circ \id_t$.
    It is easy to verify that for an edge $e = uv$ with labels $f(\fwd e) = k, f(\bwd e) = \ell$ we have that $\id_v^{-1} \circ \id_u = \phi_{k\ell}$, and this clearly shows that the \ta\ is isomorphic to $G$.
    
    The proof in the case where there is $\gamma_{st}$ mapping $s$ to $t$ is very similar to the first case. 
    Define $G_1$ and $G_2$ as before, but make sure that $\gamma_{st} \circ \id_s = \id_t$.
    This ensures that the actions $\Gamma_1$ on $G_1$ and $\Gamma_2$ on $G_2$ are the same, hence we can without loss of generality assume that $G_1 = G_2$ and $\Gamma_1 = \Gamma_2$.
    
    Set $I_1 = I_2 = K \cup L$ and let $J=K$. 
    Recall that $f(\fwd e) \in K$ if and only if $f(\bwd e) \in L$.
    Since we can map $s$ to $t$, there are positively and negatively oriented edges starting at each vertex, hence the labels of edges starting at any vertex are in bijection with $K \cup L$.
    Hence (o) for \ta s of Type 2 holds.
    Define the adhesion sets and adhesion maps exactly as above (but note that all adhesion sets end up in the same graph since $G_1= G_2$).
    This gives a \ta\ of Type 2 by construction which is isomorphic to $G$ by the same argument as above.
\end{proof}

Now we are ready to prove the main result of this section, the graph-theoretical analogue of Stallings' theorem, Theorem~\ref{stallingsgraphIntro}.
We are proving a slightly stronger version than the one we stated in the introduction.

\begin{thm}\label{stallingsgraph}
Let $\Gamma$ be a group acting \qt ly on a locally finite graph $G$ with more than one end. Then there are subgraphs $G_1,G_2$ of~$G$ and groups $\Gamma_1,\Gamma_2$ acting \qt ly on $G_1,G_2$, respectively, such that $G$ is a non-trivial \ta\ $(G_1,\Gamma_1)\ast(G_2,\Gamma_2)$ of finite adhesion and finite identification length.

Furthermore, $\Gamma_i$ can be chosen to be the setwise stabiliser of~$G_i$ in~$\Gamma$.
%
\end{thm}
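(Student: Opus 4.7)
The plan is to assemble the work from Sections~\ref{TD} and~\ref{Sec_Splitting}: produce a connected basic \td\ of $G$, convert it into the desired \ta\ via Lemma~\ref{TD_implies_TA}, and then extract the four required properties from earlier results.

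First I would apply Corollary~\ref{compatible}(i) to $G$ and $\Gamma$ to obtain a connected basic \td\ $(T,\Vcal)$ of $G$ with respect to~$\Gamma$; by definition this \td\ has finite adhesion and distinguishes at least two ends. Feeding $(T,\Vcal)$ into Lemma~\ref{TD_implies_TA} produces either a Type~1 \ta\ $G[V_t]\free_T G[V_{t'}]$ or a Type~2 self-amalgamation $G[V_t]\free_T G[V_t]$, in each case non-trivial and respecting the actions of the relevant setwise stabilisers. Setting $G_1,G_2$ to be these induced subgraphs and $\Gamma_i$ to be their setwise stabilisers in~$\Gamma$, Proposition~\ref{quasitransitivebags} immediately gives that $\Gamma_i$ acts \qt ly on~$G_i$, and finiteness of adhesion transfers directly from $(T,\Vcal)$ to the \ta.

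The step that needs extra care is the finite identification length. For a vertex $x\in V(G)$, the set of nodes $t\in V(T)$ whose copy of $G_t$ contributes a representative identified with~$x$ is exactly the subtree $T_x\defi\{t\in V(T):x\in V_t\}$, so the identification length of~$x$ equals the diameter of~$T_x$. If $T_x$ has at least one edge, then every edge $ss'$ of $T_x$ satisfies $x\in V_s\cap V_{s'}$, which means $T_x$ is covered by the subtrees $T_X$ ranging over the adhesion sets $X$ containing~$x$. Lemma~\ref{lem_TXfiniteDiam} bounds the diameter of each such $T_X$ by~$2$, and since $x$ lies in only finitely many adhesion sets (by local finiteness together with the single $\Gamma$-orbit on~$E(T)$), the subtree $T_x$ itself has bounded diameter. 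Finally, quasi-transitivity of~$\Gamma$ on $V(G)$ combined with $\Gamma$-invariance of~$(T,\Vcal)$ upgrades this to a uniform bound over the finitely many $\Gamma$-orbits of vertices, yielding finite identification length. The main obstacle is precisely this last passage: Lemma~\ref{lem_TXfiniteDiam} only controls one adhesion set at a time, so one must combine its diameter bound with quasi-transitivity to transfer it uniformly to all vertices simultaneously.
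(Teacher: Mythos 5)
Your proposal is correct and mirrors the paper's own proof step by step: a connected basic \td\ via Corollary~\ref{compatible}, conversion to a \ta\ via Lemma~\ref{TD_implies_TA} (which also delivers non-triviality and the choice of $\Gamma_i$ as setwise stabilisers), and Proposition~\ref{quasitransitivebags} for quasi-transitivity of the stabiliser actions. The only place you elaborate beyond the paper is the finite-identification-length step: the paper simply notes that each vertex lies in only finitely many adhesion sets (one $\Gamma$-orbit plus local finiteness) and declares this sufficient, whereas you unpack it by covering the subtree $T_x$ with the diameter-at-most-$2$ subtrees $T_X$ from Lemma~\ref{lem_TXfiniteDiam}; this is a more explicit rendering of the same idea (and it conveniently sidesteps the question of whether a single adhesion set, as a vertex set, could correspond to many tree edges) rather than a genuinely different route.
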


\begin{proof}
By Corollary~\ref{compatible}, $G$ has a connected basic \td\ $(T,\Vcal)$. 
	Using Lemma~\ref{TD_implies_TA}, $G$ is a non-trivial \ta\ $(G_1,\Gamma_1)\ast_T (G_2,\Gamma_2)$, where $\Gamma_i$ is the setwise stabiliser of~$G_i$ in~$\Gamma$ for $i=1,2$.
	Proposition~\ref{quasitransitivebags} implies that $\Gamma_i$ acts \qt ly on~$G_i$ for $i=1,2$.
	It remains to show that the identification length is finite.
	Note that every vertex lies in only finitely many adhesion sets as all those adhesion sets lie in a common $\Gamma$-orbit and as $G$ is locally finite.
	This directly implies that the identification length is finite.
\end{proof}


\section{Accessible graphs}
\label{Sec_Accessible}

Let $G$ be a connected \qt\ locally finite graph with more than one end and let $\Gamma$ act \qt ly on~$G$.
We say that $G$ \emph{splits (non-trivially)} into connected \qt\ locally finite graphs $G_1,G_2$ if it is a non-trivial \ta\ $G = G_1 \free G_2$ of finite adhesion and if the \td\ defined by $G_1\free G_2$ (as in Remark~\ref{TA_implies_TD}) is basic with respect to~$\Gamma$.
Note that the stabilizer $\Gamma_i$ in~$\Gamma$ of~$G_i$ acts \qt ly on~$G_i$ by Proposition~\ref{quasitransitivebags}.
Now if one of the \emph{factors} $G_1$ or $G_2$ also has more than one end, we can split it with respect to~$\Gamma_i$, too.
We can continue this for every factor and call this a \emph{process of splittings}.
Note that it is important in a process of splittings to use the group action of the stabiliser of the factor in order to split the factor.
If we eventually end up with factors that are either finite or have at most one end, i.\,e.\ if the process of splittings terminates, we call the (multi-)set of these factors a \emph{terminal factorisation} of~$G$.
(Also, if $G$ is one-ended, we say it is a \emph{terminal factorisation} of itself.)
We call $G$ \emph{accessible} if it has a terminal factorisation.

\begin{remark}\label{rem_access2}
Let $G$ be an accessible connected \qt\ locally finite graph.
Then there are connected \qt\ locally finite graphs $G_1,\ldots$, $G_n$, $H_1,\ldots,H_{n-1}$ with $G=H_{n-1}$ and trees $T_1,\ldots, T_{n-1}$ such that the following hold:
\begin{enumerate}[label=(\roman*)]
\item every $G_i$ has at most one end;
\item for every $i\leq n-1$, the graph $H_i$ is a \ta\ $H\ast_{T_i}H'$ with respect to group actions of finite adhesion, where
\[
H,H'\in\{G_j\mid 1\leq j\leq n\}\cup\{H_j\mid 1\leq j<i\}.
\]
\end{enumerate}
\end{remark}

\begin{remark}\label{rem_access3}
Let $\Gcal_0$ be the class of all connected \qt\ locally finite graphs with at most one end.
For $i>0$, let $\Gcal_i$ be the class obtained by \ta s of finite adhesion of elements in $\bigcup_{j<i} \Gcal_j$.
Set $\Gcal:=\bigcup_{i\in\N}\Gcal_i$.
Then $\Gcal$ is the class of all accessible connected \qt\ locally finite graphs.
\end{remark}

A locally finite  \qt\ graph $G$ is \emph{edge-accessible} if there exists a positive integer $k$ such that any two ends of~$G$ can be separated by at most~$k$ edges and it is vertex-accessible if there exists a positive integer~$k'$ such that any two ends of~$G$ can be separated by at most~$k'$ vertices.

The following result generalizes a graph theoretical characterisation of accessibility of finitely generated groups: Thomassen and Woess~\cite[Theorem 1.1]{ThomassenWoess} proved that a finitely generated group is accessible if and only if it has an edge-accessible locally finite Cayley graph.

\begin{thm}\label{thm_accessmain}
Let $G$ be a connected locally finite \qt\ graph.
Then the following statements are equivalent.
\begin{enumerate}[label=\rm (\arabic*)]
\item\label{itm_accessmainAcc} $G$ is accessible.
\item\label{itm_accessmainEAcc} $G$ is edge-accessible.
\item\label{itm_accessmainVAcc} $G$ is vertex-accessible.
\end{enumerate}
\end{thm}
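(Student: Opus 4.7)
The equivalence of \ref{itm_accessmainEAcc} and \ref{itm_accessmainVAcc} is elementary from local finiteness: if $\Delta$ denotes the (finite) maximum degree of~$G$, then any edge separator of size~$k$ has at most $2k$ endpoints, which form a vertex separator of the same two ends; conversely, any vertex separator of size~$k$ is incident to at most $k\Delta$ edges whose removal again separates the two ends. Hence the vertex- and edge-accessibility bounds differ by at most the factor~$\Delta$.

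To prove \ref{itm_accessmainAcc}$\Rightarrow$\ref{itm_accessmainVAcc} I plan to induct on the level~$n$ given by Remark~\ref{rem_access3}, writing $G\in\Gcal_n$. The base case $n=0$ is trivial since $G$ then has at most one end. For the inductive step, write $G=H_1\ast_T H_2$ with each $H_i$ vertex-accessible with bound $k_i$ (inductive hypothesis) and with adhesion bounded by some $k_0$. Remark~\ref{TA_implies_TD} turns this \ta\ into a basic \td\ $(T,\Vcal)$ of~$G$. Given two ends of $G$, Proposition~\ref{capture thick ends}\ref{itm_cte_1} tells us they are captured by points of~$T$. If they are captured by distinct points, then an adhesion set of size~$\leq k_0$ on the separating edge of~$T$ separates them. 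If they are captured by a common node~$t$, Proposition~\ref{capture thick ends}\ref{itm_cte_4} yields two distinct corresponding ends of $G[V_t]$, which are separated inside $G[V_t]$ by $\leq\max(k_1,k_2)$ vertices by the inductive hypothesis; the lift-to-$G$ argument from the second half of the proof of Proposition~\ref{capture thick ends}\ref{itm_cte_4}, which enlarges the separator by the vertices of all adhesion sets meeting it, then yields a separator in~$G$ of size bounded by $\max(k_1,k_2)(1+Mk_0)$, where~$M$ is the uniformly finite number of adhesion sets through any single vertex.

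For \ref{itm_accessmainVAcc}$\Rightarrow$\ref{itm_accessmainAcc} the plan is to iterate Theorem~\ref{stallingsgraph}: if the current factor has more than one end, split it as a non-trivial \ta\ of finite adhesion and recurse on the two new factors, each of which is \qt\ under the stabiliser of its part by Proposition~\ref{quasitransitivebags}. Each factor inherits vertex-accessibility with the same bound~$k$, because any two of its ends correspond via Proposition~\ref{capture thick ends}\ref{itm_cte_4} to two ends of the parent graph separated there by~$\leq k$ vertices, and restricting such a separator to the factor's vertex set still separates its ends in the factor.

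The hard part is showing that this recursion terminates, since the bound~$k$ itself need not drop across a single splitting. I intend to force termination via the lexicographically ordered pair $(k^*,N^*)$, where $k^*$ is the minimum size of a vertex separator between two ends of the current factor, and $N^*$ is the number of $\Gamma$-orbits of such minimum separators that separate some pair of ends; the latter is finite by local finiteness combined with quasi-transitivity. At each splitting we choose the basic \td\ to realise~$k^*$, thereby consuming one orbit of minimum separators; the pair $(k^*,N^*)$ strictly decreases (dropping in~$N^*$, or, when~$N^*$ reaches zero in a factor, in~$k^*$) until the current factor has at most one end. The finitely many leaves of the resulting recursion then form the required terminal factorisation of~$G$.
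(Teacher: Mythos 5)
Your handling of the equivalence of \ref{itm_accessmainEAcc} and \ref{itm_accessmainVAcc} and of the direction \ref{itm_accessmainAcc}$\Rightarrow$\ref{itm_accessmainVAcc} is essentially what the paper does (the paper merely calls the former ``trivial''; your induction on the level of Remark~\ref{rem_access3}, using Proposition~\ref{capture thick ends} to place a pair of ends at a common node or across an adhesion set, and then enlarging the separator inside $G[V_t]$ by nearby adhesion sets, is the paper's argument). The only caveat there is that ``adding adhesion sets \emph{meeting}~$S$'' is not quite strong enough: the replacement paths used to push a double ray into $G[V_t]$ are geodesics in $G[V_t]$ between the endpoints of an excursion, and such a geodesic can meet $S$ without its adhesion set doing so. The paper therefore adds all adhesion sets within distance~$K$ of~$S$, where~$K$ bounds the diameter of the adhesion sets; you should do the same.

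The real gap is in \ref{itm_accessmainVAcc}$\Rightarrow$\ref{itm_accessmainAcc}, and specifically in your termination argument. You propose to iterate Theorem~\ref{stallingsgraph}, recomputing a basic \td\ in each factor that realises the minimum separator size $k^*$, and to show termination by a lexicographic descent of $(k^*,N^*)$, where $N^*$ counts orbits of minimum end-separating separators. This descent is not established, and there are two concrete obstacles. First, when you pass from $G$ to a factor $H=G[V_t]$, the relevant orbit count is under $\Gamma_{V_t}$, a subgroup of~$\Gamma$, so a single $\Gamma$-orbit of separators can split into several $\Gamma_{V_t}$-orbits; $N^*$ can go up for this reason alone. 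Second, a $k^*$-set $W\subseteq V_t$ may fail to separate the corresponding pair of ends of~$G$ (because of paths leaving and re-entering $V_t$) and yet separate them in~$H$ once those detours have been cut away; such~$W$ is a \emph{new} minimum separator in~$H$, not drawn from the orbits you are ``consuming''. So $(k^*,N^*)$ need not decrease even with your careful choice of splitting, and the recursion is not shown to terminate. This is precisely the difficulty the paper sidesteps: it first produces, via Theorem~\ref{thm_access}, a single $\Gamma$-invariant \td\ of~$G$ of finite adhesion that efficiently distinguishes \emph{all} ends and has only finitely many orbits of adhesion sets, and then it inducts on that number of orbits. At each step it contracts all but one orbit of tree-edges to get a basic \td, splits using Lemma~\ref{TD_implies_TA}, and observes that the original \td\ restricts to each part with strictly fewer orbits of adhesion sets. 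That precomputed, global \td\ supplies the monotone quantity that your freshly-chosen $(k^*,N^*)$ does not. Indeed, the paper explicitly leaves open (as a conjecture immediately after this theorem) whether an arbitrary process of splittings must terminate, so a replacement argument would need to be substantially more careful than what you have written.
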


Before we prove Theorem~\ref{thm_accessmain}, we need another result.
Recall that a \td\ efficiently distinguishes two ends if there is an adhesion set $V_{t_1} \cap V_{t_2}$ separating them such that no set of smaller size than $V_{t_1} \cap V_{t_2}$ separates them.

\begin{thm}\label{thm_access}
	Let~$G$ be an vertex-accessible connected locally finite graph and let $\Gamma$ be a group acting \qt ly on~$G$.
	Then there exists a $\Gamma$-invariant \td\ $(T,\Vcal)$ of~$G$ of finite adhesion such that $(T,\Vcal)$ distinguishes all ends of~$G$ efficiently and such that there are only finitely many $\Gamma$-orbits on~$E(T)$. 
\end{thm}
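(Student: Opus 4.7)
Plan:

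Let $k$ be the vertex-accessibility constant of~$G$, so that every pair of ends of~$G$ is separable by at most~$k$ vertices. Applying Theorem~\ref{thm_canonicalTD} with this~$k$ yields a $\Gamma$-invariant \td\ $(T,\Vcal)$ of adhesion at most~$k$ that efficiently distinguishes all $k$-distinguishable ends. By the choice of~$k$ every pair of ends is $k$-distinguishable, so $(T,\Vcal)$ already distinguishes all ends of~$G$ efficiently, which takes care of the first two requirements.

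The remaining task is to show that $\Gamma$ has only finitely many orbits on~$E(T)$. The $\Gamma$-equivariant map $E(T)\to\mathcal A$ sending $tt'\mapsto V_t\cap V_{t'}$, where $\mathcal A$ is the collection of adhesion sets of~$(T,\Vcal)$, reduces this to bounding $|\mathcal A/\Gamma|$ and then the number of $\Gamma_S$-orbits of edges mapping to any fixed $S\in\mathcal A$. To bound $|\mathcal A/\Gamma|$, I plan to show that every vertex $v\in V(G)$ lies in only finitely many adhesion sets. The natural way is to pass via Proposition~\ref{connected TD} to a companion \td\ $(T,\Vcal')$ on the same decomposition tree whose adhesion sets are connected and of size at most~$k$, and therefore of diameter at most~$k-1$; each such set is contained in the ball $B(v,k-1)$, which is finite by local finiteness. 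Since $V_t\cap V_{t'}\sub V'_t\cap V'_{t'}$, every original adhesion set containing~$v$ sits inside a new one containing~$v$, so only finitely many adhesion sets of~$(T,\Vcal)$ contain~$v$. An orbit-counting argument using $|V(G)/\Gamma|<\infty$ now gives $|\mathcal A/\Gamma|<\infty$.

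For each $S\in\mathcal A$, I then need to show that the stabiliser $\Gamma_S$ acts with only finitely many orbits on the set of edges $tt'\in E(T)$ with $V_t\cap V_{t'}=S$. These edges span a subtree of~$T$ all of whose nodes have a part containing~$S$; a variant of Lemma~\ref{lem_TXfiniteDiam}, combining local finiteness of~$G$ with quasi-transitivity of the action of~$\Gamma_S$ on the parts of this subtree, should bound this subtree's diameter and degree, and hence bound the orbit count.

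The main obstacle is the tension between efficient distinguishing (which holds for $(T,\Vcal)$ but is destroyed by the enlargement of adhesion sets in Proposition~\ref{connected TD}) and the connectedness needed for the finiteness count. The proof must carry out the efficient-distinguishing claim entirely in $(T,\Vcal)$, while performing the cardinality estimates via the companion $(T,\Vcal')$ and transferring them back using the inclusion $V_t\cap V_{t'}\sub V'_t\cap V'_{t'}$. Adapting Lemma~\ref{lem_TXfiniteDiam} beyond the basic setting---in particular handling the fact that $\Gamma$ may have more than one orbit on adhesion sets of each given size---is the delicate technical point.
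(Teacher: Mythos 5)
Your opening step coincides with the paper's: apply Theorem~\ref{thm_canonicalTD} with the accessibility constant~$k$ to obtain a $\Gamma$-invariant \td\ $(T,\Vcal)$ of adhesion at most~$k$ that efficiently distinguishes all ends.

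For the finiteness of $\Gamma$-orbits on $E(T)$, however, your route diverges from the paper's and contains a genuine error. You claim that passing to the companion decomposition $(T,\Vcal')$ of Proposition~\ref{connected TD} produces connected adhesion sets ``of size at most~$k$'', hence of diameter at most~$k-1$, hence contained in~$B(v,k-1)$. That is not what Proposition~\ref{connected TD} gives you: it only asserts that the new adhesion sets are finite and connected. Each $V'_t\cap V'_{t'}$ is built by adding entire geodesics between pairs of vertices of the original adhesion sets, and nothing bounds the lengths of those geodesics in terms of~$k$, so the new adhesion sets can be arbitrarily large. Without a size or diameter bound, the ball argument collapses and you have not shown that a fixed vertex lies in only finitely many adhesion sets, which was the linchpin of your orbit count. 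The paper sidesteps this differently: it first \emph{contracts} every edge $tt'$ of~$T$ whose adhesion set does not separate some pair of ends efficiently. After contraction every adhesion set~$S$ is a tight $\leq k$-separator (there are components $C_1,C_2$ of $G-S$ such that every vertex of~$S$ has a neighbour in each), and the finiteness of orbits then follows from Thomassen and Woess~\cite[Proposition~4.2]{ThomassenWoess}, which bounds the number of such separators through a fixed vertex. Your proposal omits the contraction step entirely; but that step, or an independent substitute for the Thomassen--Woess finiteness result, is exactly what is missing from your argument. Your further concern---bounding the number of $\Gamma_S$-orbits of tree edges sharing a given adhesion set~$S$ via a variant of Lemma~\ref{lem_TXfiniteDiam}---is a point the paper's proof glosses over, but you leave it as an unproved sketch, and in any case it cannot repair the gap above.
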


\begin{proof}
Since $G$ is vertex-accessible, there exists $k\in\N$ such that every two ends can be separated by at most~$k$ vertices.
By Theorem~\ref{thm_canonicalTD} we find a $\Gamma$-invariant \td\ $(T,\Vcal)$ of~$G$ of adhesion at most~$k$ that distinguishes all ends efficiently.

For every adhesion set $V_t\cap V_{t'}$ that does not separate any two ends efficiently, we contract the edge $tt'$ in~$T$ and assign the vertex set $V_t\cup V_{t'}$ to the new node.
It is easy to check that the resulting pair $(T',\Vcal')$ is again a \td.
It only has adhesion sets that distinguish ends efficiently.
Note that $\Gamma$ still acts on $(T',\Vcal')$ as the set of adhesion sets that do not separate ends efficiently is $\Gamma$-invariant.
A result of Thomassen and Woess~\cite[Proposition 4.2]{ThomassenWoess} says that there are only finitely many vertex sets $S$ of size at most~$k$ containing a fixed vertex such that for two components $C_1,C_2$ of $G-S$ every vertex of~$S$ has a neighbour in~$C_1$ and in~$C_2$.
It follows that there are only finitely many orbits of adhesion sets that separate ends efficiently.
This proves the assertion.
\end{proof}

\begin{proof}[Proof of Theorem~\ref{thm_accessmain}.]
The equivalence of~\ref{itm_accessmainEAcc} and~\ref{itm_accessmainVAcc} is trivial.

To prove that \ref{itm_accessmainVAcc} implies \ref{itm_accessmainAcc}, let $G$ be vertex-accessible and let $\Gamma$ be a group acting on~$G$ with only finitely many orbits.
By Theorem~\ref{thm_access} we find a $\Gamma$-invariant \td\ $(T,\Vcal)$ of~$G$ of finite adhesion such that $(T,\Vcal)$ distinguishes all ends of~$G$ efficiently and such that there are only finitely many $\Gamma$-orbits on~$E(T)$. 
By Proposition~\ref{connected TD}, we may assume that all adhesion sets are connected.
We prove the assertion by induction on the number of $\Gamma$-orbits of adhesion sets of $(T,\Vcal)$.
Let $tt'\in E(T)$.
For every edge $t_1t_2\in E(T)$ that does not lie in the same $\Gamma$-orbits as~$tt'$, we contract the edge $t_1t_2$ in~$T$ and assign the vertex set $V_{t_1}\cup V_{t_2}$ to the new node.
Let $T'$ be the resulting tree and $\Vcal'=\{V_s\mid s\in V(T')\}$.
It is easy to verify that $(T',\Vcal')$ is a \td.
The only edges of~$T'$ are those that have their origin in the $\Gamma$-orbit of the edge $tt'\in E(T)$ and $\Gamma$ still acts on $(T',\Vcal')$ such that $(T',\Vcal')$ is a connected basic \td\ of~$G$ with only connected adhesion sets.
Lemma~\ref{TD_implies_TA} implies that $G$ is a non-trivial \ta\ $G_1\ast_{T'}G_2$ with respect to group actions, where the graphs $G_1$ and $G_2$ are induced by the parts of~$(T',\Vcal')$.
The \td\ $(T,\Vcal)$ induces a \td\ $(T_W,\Wcal)$ on the parts $W$ of~$(T',\Vcal')$ and there are less $\Gamma_W$-orbits on the adhesion sets of $(T_W,\Wcal)$ than $\Gamma$-orbits on the adhesion sets of $(T,\Vcal)$.
Thus, we can apply induction.
This shows~\ref{itm_accessmainAcc}.

To prove that~\ref{itm_accessmainAcc} implies~\ref{itm_accessmainVAcc}, we will use the graph classes $\Gcal_i$ and $\Gcal$ as defined in~\ref{rem_access3} and show inductively that every $\Gcal_i$ contains only vertx-accessible connected \qt\ locally finite graphs.
This is obviously true for $\Gcal_0$.
Let $G\in \Gcal_i$ for $i>0$.
Then there are $G_1,G_2\in\bigcup_{j<i}\Gcal_j$ such that $G$ is a \ta\ $G_1\ast_T G_2$ of finite adhesion.
By induction, we may assume that $G_1$ and $G_2$ are edge-accessible and \qt.
Note that quasi-transitivity of~$G$ follows from Lemma~\ref{lem_TAareQT} since $G_1$ and~$G_2$ are \qt.
For $i=1,2$, let $k_i$ be a positive number such that any two ends of~$G_i$ can be separated by at most $k_i$ many vertices.
Let $(T,\Vcal)$ be the \td\ we obtain from the \ta\ $G_1\ast_T G_2$ according to Remark~\ref{TA_implies_TD}.
Let $k$ be the maximum of $k_1$, $k_2$ and the size of adhesion sets of $G_1\ast_T G_2$.

Let $Q,R$ be two rays in different ends $\omega_Q$, $\omega_R$ of~$G$, respectively.
If there is some adhesion set $V_t\cap V_{t'}$ such that $Q$ and~$R$ have tails that are separated by~$V_t\cap V_{t'}$, then the ends they lie in must be separated by that adhesion set as well.
Hence, they are separable by a separator of order at most~$k$.
So we may assume that, eventually, they lie on the same side of each separator.
By Proposition~\ref{capture thick ends_new}\;\ref{itm_cte_1} every end of~$G$ is captured either by an end or by a node of~$T$.
Thus and since no separator separates any tails of~$Q$ and~$R$, their ends are captured by the same node or end of~$T$.
By Proposition~\ref{capture thick ends_new}\;\ref{itm_cte_3} an end of~$T$ captures a unique end of~$G$.
Thus, $\omega_Q$ and $\omega_R$ are captured by the same node of~$T$.
By Proposition~\ref{capture thick ends_new}\;\ref{itm_cte_4} every end of~$G$ that is captured by a node $t\in V(T)$ corresponds to a uniquely determined end of~$G[V_t]$.
These ends can be separated by a separator~$S$ in~$G[V_t]$ of order at most~$k$ by assumption.
However, $S$ need not be a separator of~$G$ that separates those ends.
Still, it is possible to enlarge~$S$ to a separator of~$G$ that separates $\omega_Q$ and $\omega_R$ and still has bounded size:
every vertex of~$S$ has distance at most~$K$, the maximum diameter of the adhesion sets measured in~$G_1$ and in~$G_2$ to only finitely many adhesion sets that are contained in~$V_t$ as $G$ is locally finite; so we can add all these adhesion sets to~$S$ and obtain a set $S'$.
As $G$ is quasi-transitive, the size of~$S'$ only depends on~$k$, the number of orbits of vertices of~$G$, the maximum number of adhesion sets in~$V_t$ that have distance at most~$K$ to a common vertex and the size of any adhesion set of $(T,\Vcal)$, in particular, it is bounded by some $\ell\in\N$ and it is independent of the chosen ends.
If we show that $S'$ separates $\omega_Q$ and $\omega_R$, then it follows immediately that $G$ is edge-accessible.

Let $P=\ldots, x_{-1},x_0,x_1,\ldots$ be a double ray with its tail $x_0,x_1,\ldots$ in~$\omega_Q$ and its tail $x_0,x_{-1},\ldots$ in~$\omega_R$.
Since both ends $\omega_Q$ and $\omega_R$ are captured by~$V_t$, there are infinitely many $x_i$ with $i>0$ that lie in~$V_t$ and infinitely many $x_i$ with $i<0$ that lies in~$V_t$.
Let us assume $x_0\in V_t$.
Whenever the ray $P^+:=x_0x_1\ldots$ leaves $V_t$ through an adhesion set $V_t\cap V_{t'}$, it must reenter $V_t$ and this must happen through the same adhesion set.
Since $S$ is finite and separates $\omega_Q$ and $\omega_R$, there are $i_1,i_2\in\Z$ such that no $x_i\in V_t$ with $i\geq i_1$ is separated in $G[V_t]$ by~$S$ from~$\omega_Q$ and no $x_i\in V_t$ with $i\leq i_2$ is separated in $G[V_t]$ by~$S$ from $\omega_R$.
Then there must be some path $x_i,\ldots,x_j$ with $j\geq i+1$ and whose inner vertices lie outside of~$V_t$ such that $x_j$ is not separated by~$S$ from~$\omega_Q$ and $x_i$ is not separated by~$S$ from~$\omega_R$.
Thus, the shortest $x_i$-$x_j$ path in $G[V_t]$ meets~$S$.
As $x_i$ and~$x_j$ lie in a common adhesion set, we conclude that this lies in~$S'$.
Thus, $S'$ separates $\omega_Q$ from~$\omega_R$ in~$G$.
This shows~\ref{itm_accessmainVAcc}.
\end{proof}

In the proof of the implication \ref{itm_accessmainVAcc} to \ref{itm_accessmainAcc} of Theorem~\ref{thm_accessmain} we chose a specific way to split the factors.
(It was based on a $\Gamma$-invariant \td\ of~$G$.)
We do not know if we can split arbitrary in each step and still have to end in a terminal factorisation.
But we conjecture that this is true.

\begin{conj}
Let $G$ be an accessible connected \qt\ locally finite graph.
Every process of splittings must end after finitely many steps.
\end{conj}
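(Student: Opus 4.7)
The plan is to prove the conjecture by induction on a natural complexity invariant extracted from the canonical \td\ of Theorem~\ref{thm_access}, showing that each splitting strictly decreases this invariant.

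First, by Theorem~\ref{thm_accessmain}, accessibility of $G$ implies vertex-accessibility, so for any fixed \qt\ action $\Gamma \curvearrowright G$ there is a bound $k$ such that every two ends of $G$ are $k$-distinguishable. Applying Theorem~\ref{thm_access} then yields a $\Gamma$-invariant \td\ $(T^*, \Vcal^*)$ of~$G$ of finite adhesion which efficiently distinguishes all ends and admits only finitely many $\Gamma$-orbits on $E(T^*)$. Denote this number of orbits by $n(G,\Gamma)$. Note that $n(G,\Gamma) = 0$ forces $G$ to have at most one end, so no splitting is possible in that case.

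The induction would be on $n(G,\Gamma)$, with the base case being immediate. For the inductive step, consider a single splitting $G = G_1 \free G_2$ coming from a connected basic \td\ $(T,\Vcal)$. Each factor $G_i$ is again \qt\ under the setwise stabiliser $\Gamma_i$ by Proposition~\ref{quasitransitivebags}, and $G_i$ is itself accessible since a terminal factorisation of~$G$ restricts to one of~$G_i$; hence $n(G_i,\Gamma_i)$ is well defined. The key claim to establish is
\[
n(G_i, \Gamma_i) \;<\; n(G,\Gamma) \quad \text{for } i = 1,2.
\]
Once this is shown, the conjecture follows: the process of splittings is a rooted tree whose branches have strictly decreasing $n$-values in~$\N$, hence are finite.

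To prove the strict decrease, I would exploit that the basic \td\ $(T,\Vcal)$ associated with the splitting via Remark~\ref{TA_implies_TD} has a single $\Gamma$-orbit of adhesion sets, which distinguishes at least one pair of ends of~$G$; by the efficiency of $(T^*,\Vcal^*)$, this orbit can be identified with at least one $\Gamma$-orbit of edges of~$T^*$. Restricting $(T^*, \Vcal^*)$ to each part~$V_t$ of $(T, \Vcal)$, and using Proposition~\ref{capture thick ends_new}\,\ref{itm_cte_4} to identify ends of $G[V_t]$ with ends of~$G$ captured by~$t$, produces a $\Gamma_t$-invariant \td\ of $G[V_t]$ whose edge orbits form a strict subset of those of~$T^*$, since the orbit used by the splitting has been collapsed inside~$V_t$. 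This restricted decomposition either already plays the role of the canonical \td\ for $G_i$, or it can be replaced by one (via Theorem~\ref{thm_access} applied to $(G_i,\Gamma_i)$) with no more edge orbits, giving $n(G_i, \Gamma_i) \le n(G,\Gamma) - 1$.

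The hard part will be making the correspondence between the splitting's adhesion sets and edges of~$T^*$ fully rigorous: the canonical \td\ distinguishes ends \emph{efficiently}, while the splitting is only required to have finite adhesion, so one must either show that the splitting's adhesion sets do not exceed the sizes of corresponding efficient cuts in~$T^*$, or argue that non-efficient adhesion sets in the splitting can be tightened to efficient ones without changing the orbit count. Combined with the analysis of ends of $G[V_t]$ and how their efficient separators in $G[V_t]$ lift to separators in~$G$ of comparable size (in the spirit of the proof of Theorem~\ref{thm_accessmain}), this would complete the strict decrease of~$n$ and close the induction.
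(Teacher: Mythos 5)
This statement is a conjecture in the paper, not a proven theorem: the authors explicitly write that they do not know whether an arbitrary process of splittings must terminate. So you are attempting to close an open problem, and your write-up does not do so -- you acknowledge as much yourself when you write that ``the hard part will be making the correspondence \dots\ fully rigorous'' and then offer two unexecuted strategies for doing it. Let me be concrete about why the gaps are genuine rather than routine.

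First, the invariant $n(G,\Gamma)$ is not obviously well-defined: Theorem~\ref{thm_access} asserts the existence of a $\Gamma$-invariant tree-decomposition of the required kind, but not uniqueness of the number of edge-orbits. You would need to either take a minimum over all such decompositions or argue that the count is canonical. Second, and more seriously, the key claim $n(G_i,\Gamma_i) < n(G,\Gamma)$ is precisely the open content of the conjecture, and the obstruction is real: a process of splittings may use \emph{any} basic tree-decomposition at each step, and its single orbit of adhesion sets need not be efficient, need not coincide with (or even have the same size as) any adhesion set of $T^*$, and need not be ``visible'' in $T^*$ at all. Nothing in the definitions forces the chosen splitting to remove an orbit of $T^*$. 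Moreover, even if a separator of $T^*$ is contained in a part $V_t$, the relevant group in the inductive step is the stabiliser $\Gamma_t$, a subgroup of $\Gamma$, and orbits can \emph{split} when passing to a subgroup, so the orbit count on the ``restricted'' decomposition could in principle increase rather than decrease. Third, your claim that each factor $G_i$ is accessible ``since a terminal factorisation of~$G$ restricts to one of~$G_i$'' is itself unjustified: the terminal factorisation witnessing accessibility of $G$ arises from a specific sequence of splittings that need not be compatible with the splitting $G=G_1\free G_2$ you chose, so it does not restrict in any evident way. (What is true, via Theorem~\ref{thm_accessmain}, is that $G$ is vertex-accessible, and one would have to argue separately that vertex-accessibility passes to $G_i$; this is plausible but must be proved.) In short, your outline identifies a reasonable candidate invariant, but the decrease of that invariant under an arbitrary splitting is exactly what the conjecture is about, and it is not established here.
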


Accessibility of finitely generated groups received lots of attention after Wall~\cite{Wall-AccessibilityConjecture} conjectured that all finitely generated groups are accessible and among the main results in this area are Dunwoody's results that Wall's conjecture is false in general~\cite{D-AnInaccessibleGroup} but true for (almost) finitely presented groups~\cite{dunaccess}.
In the case of \qt\ locally finite graphs, the investigation focused on edge-accessible graphs, see \cite{accessmatthias,Moeller-Accessibility,ThomassenWoess}.
However, Theorem~\ref{thm_accessmain} enables us to carry over these results to accessible graphs.


\section{Applications}
\label{Sec_App}

\subsection{Stallings' theorem}\label{Sec_Stallings}

In this section we will discuss how to obtain Stallings' theorem from our results.

Let $\Gamma$ be a finitely generated group with infinitely many ends and let $G$ be a locally finite Cayley graph of~$\Gamma$.
Then $G$ has infinitely many ends, too.
By Theorem~\ref{stallingsgraph}, $G$ is a non-trivial \ta\ $G_1\ast_T G_2$ of finite adhesion.
Since it has finite adhesion and $\Gamma$ acts regularly\footnote{i.\,e.\ for every two $u,v\in V(G)$ there is a unique element of~$\Gamma$ mapping $u$ to~$v$} on~$G$, the stabiliser in~$\Gamma$ of an edge of~$T$, which is just the stabiliser in~$\Gamma$ of the corresponding adhesion set, is finite.
Hence, Bass-Serre theory leads to Stallings' theorem.

\begin{thm}{\rm \cite{Stallings}}
If a finitely generated group has more than one end, then it is either a free product with amalgamation over a finite subgroup or an HNN-extension over a finite subgroup.\qed
\end{thm}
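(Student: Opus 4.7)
The plan is to derive Stallings' theorem from Theorem~\ref{stallingsgraph} via the Bass--Serre dictionary. Let $\Gamma$ be a finitely generated group with more than one end, fix a finite generating set, and let $G$ be the corresponding Cayley graph. Then $G$ is connected and locally finite, has the same number of ends as $\Gamma$, and carries the regular left action of $\Gamma$, which is in particular \qt. Theorem~\ref{stallingsgraph} therefore supplies a non-trivial \ta\ $G=G_1\free_T G_2$ of finite adhesion respecting an action of $\Gamma$. By Remark~\ref{TA_implies_TD} this yields a $\Gamma$-invariant \td\ $(T,\Vcal)$ whose finite adhesion sets lie in a single $\Gamma$-orbit of edges of~$T$.

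The next step is to read off Bass--Serre data from this tree action. After possibly passing to the index-$2$ subgroup of $\Gamma$ preserving the bipartition of~$T$, I may assume $\Gamma$ acts on $T$ without inverting edges. I would then show that every edge stabiliser of this action is finite: since $\Gamma$ acts freely on $V(G)$, any $\gamma\in\Gamma$ is determined by its image on a single vertex, so the setwise stabiliser of the finite adhesion set $V_t\cap V_{t'}$ embeds into the symmetric group on $V_t\cap V_{t'}$ and is therefore finite.

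The conclusion then follows from the fundamental theorem of Bass--Serre theory. With a single $\Gamma$-orbit of edges, the quotient graph of groups $\Gamma\backslash T$ has a single edge; depending on the type of the \ta\ delivered by Lemma~\ref{TD_implies_TA}, this quotient is either a segment (Type~1, with two vertex orbits) or a loop (Type~2, with one vertex orbit). In the first case $\Gamma$ splits as an amalgamated free product of two vertex groups over the finite edge group, and in the second case $\Gamma$ is an HNN-extension of a vertex group over a finite edge group.

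The main obstacle I anticipate is verifying that the edge stabilisers really are finite; this step essentially uses that we work with a Cayley graph rather than an arbitrary \qt\ graph, and depends on freeness of the $\Gamma$-action. The remainder is a routine translation between the graph-theoretic notion of \ta s and the group-theoretic notions of amalgamated products and HNN-extensions via the standard structure theorem for groups acting on trees without inversions.
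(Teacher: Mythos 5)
Your proposal follows essentially the same route as the paper: take a locally finite Cayley graph of~$\Gamma$, apply Theorem~\ref{stallingsgraph} to obtain a non-trivial tree amalgamation of finite adhesion, observe that the edge stabilisers are finite because $\Gamma$ acts regularly on $V(G)$ and the adhesion sets are finite, and then invoke Bass--Serre theory, with Type~1 giving an amalgamated free product and Type~2 giving an HNN-extension. You merely spell out the Bass--Serre translation in a bit more detail than the paper does (passing to the index-$2$ subgroup to avoid inversions, segment versus loop quotient), but the argument is the same.
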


Note that \ta s of Type 1 respecting the actions of groups acting on the factors lead to free products with amalgamation and \ta s of a graph with itself with respect to the action of a group leads to an HNN-extension.

\subsection{Graphs without thick ends}\label{Sec_thin}

Let us apply our main results to connected \qt\ locally finite graphs that have only thin ends.
First, we want to see that such graphs are edge-accessible, so we can apply Theorem~\ref{thm_access} and look at terminal factorisations of them.
But before we go into the proof, we need some definitions.

Let $G$ and $H$ be graphs.
A map $\varphi\colon V(G)\to V(H)$ is a \emph{$(\gamma,c)$-quasi-isometry} if there are constants $\gamma\geq 1,c\geq 0$ such that
\[
\gamma\inv d_G(x,y)-c\leq d_H(\varphi(x),\varphi(y))\leq\gamma d_G(x,y)+c
\]
for all $x,y\in V(G)$ and such that $\sup\{d_H(x,\varphi(V(G)))\mid x\in V(H)\}\leq c$.
We then say that $G$ is \emph{quasi-isometric} to~$H$.

Kr\"on and M\"oller \cite[Theorem~5.5]{kron2008quasi} showed that a connected \qt\ locally finite graph has only thin ends if and only if it is quasi-isometric to a tree.
Trees are obviously edge-accessible and it follows from the definition of edge-accessibility that the class of edge-accessible \qt\ locally finite graphs is invariant under quasi-isometries.
Thus, we have verified the following.

\begin{prop}\label{accessthin}
	Every connected locally finite \qt\ graph that has only thins ends is edge-accessible.\qed
\end{prop}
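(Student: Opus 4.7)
My plan is to combine three ingredients which the paper essentially isolates in the paragraph preceding the statement. First, by the theorem of Kr\"on and M\"oller cited above (\cite[Theorem~5.5]{kron2008quasi}), a connected \qt\ locally finite graph~$G$ all of whose ends are thin is quasi-isometric to a tree~$T$. Up to replacing~$T$ by the tree of a suitable minor-equivalent combinatorial subdivision, I may assume that~$T$ is locally finite (indeed, the quasi-isometry from a \qt\ locally finite graph forces~$T$ to have bounded degree, after which one can take a bounded-degree spanning subtree realising the same quasi-isometry type). This reduces the problem to verifying two assertions: (a)~every locally finite tree is edge-accessible, and (b)~edge-accessibility is invariant under quasi-isometries within the class of connected \qt\ locally finite graphs.

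For assertion~(a), in a tree every edge is by itself a separator whose removal yields two components, so any two distinct ends of~$T$ are separated by a single edge. Hence trees are trivially edge-accessible, with the constant $k=1$.

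For assertion~(b), suppose $\varphi\colon V(G)\to V(H)$ is a $(\gamma,c)$-quasi-iso\-met\-ry between connected \qt\ locally finite graphs, and suppose $H$ is edge-accessible with constant~$k_H$. Given two ends $\omega_Q,\omega_R$ of~$G$ represented by rays $Q,R$, the images $\varphi(Q),\varphi(R)$ lie eventually in two distinct ends of~$H$ (quasi-isometries induce a bijection between end spaces of locally finite \qt\ graphs). Take a set $F\sub E(H)$ with $|F|\le k_H$ separating these ends in~$H$. Pull $F$ back through~$\varphi$: the set $S$ of vertices of~$G$ mapped within distance~$c$ of an endpoint of~$F$ has size bounded in terms of $\gamma,c$, $k_H$, the maximum degree of~$G$ and the number of $\Aut$-orbits of~$G$ on vertices (since each ball of radius~$c$ in~$G$ has bounded size and each vertex of~$H$ has only boundedly many preimages up to this scale, by local finiteness and quasi-transitivity). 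Hence the edge set consisting of all edges incident with~$S$ has size bounded by a constant~$k$ depending only on~$\varphi$, $k_H$, $G$, and~$H$; by construction it separates the tails of~$Q$ and~$R$, since any path avoiding it would give a path in~$H$ avoiding~$F$ between the relevant ends. Thus~$G$ is edge-accessible with constant~$k$.

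Combining (a) and (b) with the reduction via Kr\"on--M\"oller yields the proposition. The main technical point is assertion~(b); the delicate part is bookkeeping the pull-back of an edge separator to a bounded vertex set (hence edge set) in~$G$, and for this quasi-transitivity and local finiteness of~$G$ are essential, since without them the fibres of~$\varphi$ could be unbounded and the pulled-back separator would no longer be finite in a uniform way.
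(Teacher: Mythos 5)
Your proof uses exactly the paper's argument: invoke Kr\"on--M\"oller to get a quasi-isometry to a tree, observe that trees are trivially edge-accessible, and appeal to quasi-isometry invariance of edge-accessibility for connected \qt\ locally finite graphs; the paper compresses all three steps into one sentence, and you merely flesh out the details. One small remark: your assertion~(b) as stated assumes $H$ is \qt, but your proof of it only uses quasi-transitivity and local finiteness of the source graph~$G$, so it does apply with $H$ equal to the tree $T$ even though $T$ need not be \qt.
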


We mention that Thomassen and Woess~\cite[Theorem 5.3]{ThomassenWoess} showed Proposition~\ref{accessthin} for transitive graphs directly with a nice graph theoretical argument.
It is not too hard to modify their argument in such a way that the proof works for \qt\ graphs as well.

Another result we need for our investigation here is due to Thomassen.

\begin{prop}{\rm \cite[Proposition 5.6.]{hadwiger}}
	\label{transitive thick ends}
	If~$G$ is an infinite connected \qt\ locally finite graph  with only one end, then the end is thick.\qed
\end{prop}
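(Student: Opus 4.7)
The plan is to argue by contradiction: assume the unique end $\omega$ is thin, of some finite degree $k \geq 1$. Using the standard structure theory of thin ends, one can choose an infinite family $F_1, F_2, \dots$ of pairwise disjoint tight vertex separators of $G$, each of size $k$, such that $F_{n+1}$ lies in the $\omega$-component $C_n$ of $G - F_n$ for every $n$; this is the starting configuration, and my task is to contradict it using quasi-transitivity and one-endedness.

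The next step is to produce an automorphism $g \in \Gamma$ that translates some $F_i$ to a later $F_j$. I will apply a double pigeonhole: since $\Gamma$ has only finitely many orbits on $V(G)$, one orbit $\Gamma v_0$ meets infinitely many of the $F_n$, giving $\gamma_n \in \Gamma$ with $\gamma_n v_0 \in F_n$ for each such $n$; and by the Thomassen--Woess result (Proposition~4.2 of their paper, already used in the proof of Theorem~\ref{thm_access}) only finitely many tight vertex separators of size at most $k$ contain the fixed vertex $v_0$. Hence two of the pullbacks $\gamma_n^{-1} F_n$ coincide, which yields $g \in \Gamma$ with $g F_i = F_j$ for some $i < j$.

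Now I would exploit one-endedness. Because $G$ has only one end, every automorphism fixes $\omega$, so $g$ maps $C_i$ onto the $\omega$-component of $G - F_j$, namely $C_j$. Since $F_j \subseteq C_i$, this forces $g C_i = C_j \subsetneq C_i$. Iterating yields a strictly increasing chain $C_i \subsetneq g^{-1} C_i \subsetneq g^{-2} C_i \subsetneq \dots$, so $g$ has infinite order and the sets $g^n F_i$ with $n \in \mathbb{Z}$ are pairwise distinct. On the other hand, local finiteness implies that $G - F_i$ has only finitely many components, and one-endedness implies that exactly one of them (namely $C_i$) is infinite; hence $V(G) \setminus C_i$ is \emph{finite}. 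Since $F_i$ is disjoint from $C_i$ by construction, $g^{-n} F_i$ is disjoint from $g^{-n} C_i \supseteq C_i$, and so all the infinitely many distinct sets $g^{-n} F_i$ for $n \geq 1$ lie inside the finite set $V(G) \setminus C_i$---a contradiction.

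The main obstacle is the translation-producing step: one has to splice together the quasi-transitive action with the Thomassen--Woess finiteness of tight separators of bounded size so as to extract an automorphism that shifts the defining sequence forward. Once such a $g$ is in hand, the contradiction is the natural observation that in a one-ended locally finite graph the two sides of any finite separator are drastically asymmetric in size, so no automorphism can shift a defining $k$-separator forward without eventually exhausting the finite smaller side.
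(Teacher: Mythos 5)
The paper does not prove this proposition; it is imported verbatim from Thomassen~\cite[Proposition~5.6]{hadwiger}, so there is no in-paper argument to compare against. Your proof is a correct, self-contained argument along the standard lines: use the structure theory of thin ends to produce a defining sequence of disjoint tight $k$-separators, use quasi-transitivity together with Thomassen--Woess finiteness of tight separators through a fixed vertex to extract a translation $g$ with $gF_i = F_j$ for some $i<j$, and then exploit that a one-ended locally finite graph has a finite ``small side'' to every finite separator to obtain a contradiction from the infinitely many disjoint translates $g^{-n}F_i$. This is essentially the translation argument that underlies Thomassen's own proof, so it is not a genuinely different route, just a clean rendering of it.

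Two points deserve a little more care in the write-up. First, the defining sequence must be taken \emph{nested}, i.e.\ with $C_{n+1}\subsetneq C_n$ (equivalently $F_n\cap C_{n+1}=\emptyset$), not merely with $F_{n+1}\subseteq C_n$; the latter alone does not force $C_j\subsetneq C_i$, which is the inequality you actually use when asserting $gC_i=C_j\subsetneq C_i$. The standard construction of a defining sequence for a thin end does give nestedness, so this is a matter of stating the hypothesis precisely. Second, to invoke \cite[Proposition~4.2]{ThomassenWoess} you need each $F_n$ to be tight in their sense (two components of $G-F_n$ into which every vertex of $F_n$ sends an edge); this holds if the $F_n$ are chosen as \emph{minimum} separators between a vertex and the end, which is exactly how the defining sequence is produced via Menger, but it is worth saying explicitly since arbitrary size-$k$ separators need not be tight.
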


Recently, Carmesin et al.~\cite[Theorem~5.1]{TDOneEnded} extended Proposition~\ref{transitive thick ends} to graphs that need not be locally finite.

Now we are able to give a new characterisation of connected \qt\ locally finite graphs with only thin ends.

\begin{thm}\label{thin ends main}
	A connected \qt\ locally finite graph has only thin ends if and only if it has a terminal factorisation of only finite graphs.	
\end{thm}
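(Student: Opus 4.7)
For the forward direction, $G$ has only thin ends, so by Proposition~\ref{accessthin} it is edge-accessible and hence accessible by Theorem~\ref{thm_accessmain}. Fix a terminal factorisation $G_1,\dots,G_n$, each $G_i$ finite or one-ended. I claim that every intermediate factor arising in the splitting process inherits ``only thin ends''. Indeed, whenever $H = H_1 \ast_{T'} H_2$ is one of the splittings, each factor $H_i$ appears as $H[V_t]$ for some part $V_t$ in the associated \td; and $k$ pairwise disjoint rays in an end $\omega_t$ of $H_i$ are also $k$ pairwise disjoint rays of $H$ lying in a single end of $H$, because any finite separator $S$ of $H$ restricts to a finite separator $S\cap V_t$ in $H[V_t]$, so equivalence in $H[V_t]$ implies equivalence in $H$. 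Thus a thick end of $H_i$ lifts to a thick end of $H$; contrapositively, ``only thin ends'' is inherited by every factor. Running this down the splitting tree starting from $G$, every terminal $G_i$ has only thin ends. By Proposition~\ref{transitive thick ends} a one-ended connected \qt\ locally finite graph has a thick end, so no $G_i$ can be one-ended; every $G_i$ is therefore finite.

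For the backward direction, suppose $G$ has a terminal factorisation consisting entirely of finite graphs. The plan is to obtain a connected \td\ $(T^*,\Vcal^*)$ of~$G$ whose parts are (copies of) the terminal factors, by iteratively refining the \td\ arising (via Remark~\ref{TA_implies_TD}) from the first splitting: whenever a part of the current \td\ still represents an intermediate factor, replace it by the sub-\td\ coming from that factor's own splitting. Since the factorisation terminates after finitely many splittings, this process stops with every part finite and all adhesion sets finite and of bounded size. Proposition~\ref{capture thick ends_new}\;\ref{itm_cte_2} then says that any thick end of~$G$ would have to be captured by some node $t$ of~$T^*$, forcing every ray in that end to meet $V_t$ infinitely often; this is impossible when $V_t$ is finite. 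Hence $G$ has no thick ends.

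The main obstacle is the refinement step in the backward direction: when replacing a part by the sub-\td\ of its splitting, one must re-attach the incident edges of the outer tree to suitable nodes of the inner sub-\td, which requires each outer adhesion set to lie inside a single inner part. This can be arranged using Corollary~\ref{compatible}\;(ii) to enlarge the inner parts when necessary, but the bookkeeping is technical. An alternative route that sidesteps the explicit refined \td\ is to build an abstract tree $T^*$ encoding the iterated \ta\ structure and observe that, since all terminal factors have bounded diameter, contracting each part of $(T^*,\Vcal^*)$ to a point gives a quasi-isometry from $G$ to~$T^*$; the conclusion then follows from the Kr\"on--M\"oller characterisation~\cite[Theorem~5.5]{kron2008quasi} of graphs quasi-isometric to trees.
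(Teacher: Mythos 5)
Your forward direction is correct, and in fact it makes explicit a step the paper passes over silently: the paper invokes Proposition~\ref{transitive thick ends} to rule out one-ended terminal factors, but that only gives a contradiction once one knows the factors themselves have no thick end, which is exactly the inheritance you prove. Your argument for inheritance is sound: if $R_1,R_2$ lie in $V_t$ and some finite $S\subseteq V(H)$ separated their tails in $H$, then $S\cap V_t$ would separate them in $H[V_t]$ (any path in $H[V_t]-(S\cap V_t)$ is a path in $H-S$), so equivalence in $H[V_t]$ implies equivalence in $H$, and a thick end of a factor lifts to a thick end of the parent.

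Your backward direction, however, is not a proof: you outline two strategies but carry neither through, and you yourself flag the obstruction. Building the global refined \td\ $(T^*,\Vcal^*)$ really does require showing that each outer adhesion set lands inside a single inner part and that the composite still satisfies (T3), and the Kr\"on--M\"oller route does not sidestep this since it still presupposes $(T^*,\Vcal^*)$. The paper avoids the global refinement entirely by applying Proposition~\ref{capture thick ends_new}\,\ref{itm_cte_2} \emph{one level at a time}: in the first splitting, a thick end of $G$ would be captured by a part $V_t$, hence would induce a thick end of the factor $G[V_t]$; that factor is split again, and the thick end would again be captured by a part; iterating, a terminal factor would have to carry a thick end, which is impossible because the terminal factors are finite. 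Note this level-by-level argument has its own small hidden step (that a thick end captured by $V_t$ gives a \emph{thick} end of $G[V_t]$), but it requires no global bookkeeping and no quasi-isometry machinery. If you want to repair your write-up along the paper's lines, replace the refined-\td\ construction by this iterated application of \ref{itm_cte_2} (using \ref{itm_cte_4} to identify the induced end of the part at each stage).
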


\begin{proof}
	Let $G$ be a connected \qt\ locally finite graph.
	First, let us assume that every end of~$G$ is thin.
	By Theorem~\ref{accessthin}, $G$ is edge-accessible.
	So Theorem~\ref{thm_access} implies that $G$ is accessible and hence has a terminal factorisation.
	All the factors of that terminal factorisation have at most one end.
	Since they are quasi-transitive by Proposition~\ref{quasitransitivebags}, they cannot have one end due to Proposition~\ref{transitive thick ends}.
	So they are locally finite graphs without ends, which implies that they are finite graphs.
	
	For the other direction, we follow the steps to factorise $G$, factorise each of its factors and so on until we end up with a terminal factorisation.
	Note that by Proposition~\ref{capture thick ends_new}\,\ref{itm_cte_2} every thick end of~$G$ is captured by nodes of the involved basic \td s.
	So if $G$ had a thick end, then one of the factors of the terminal factorisation must have a thick end, which is impossible as these factors are finite by assumption.
	Thus, all ends of~$G$ are thin.
\end{proof}	

Note that there are several characterisations of (\qt\ or Cayley) graphs that are quasi-isometric to trees, see e.\,g.\ Antol\'in~\cite{virtuallyfreegroup} and Kr\"on and M\"oller~\cite{kron2008quasi}.
We enlarged their list of characterisations by our theorem.

A natural class of \qt\ graphs are Cayley graphs.
So our theorems apply in particular for such graphs and we obtain as a corollary of Theorem~\ref{thin ends main} a result for virtually free groups.
A group~$\G$ is \emph{virtually free} if it contains a free subgroup of finite index.

Woess~\cite{tree-like} showed that a finitely generated group is virtually free if and only if every end of any of its locally finite Cayley graphs is thin.
Thus we directly obtain the following corollary.

\begin{coro}
	A finitely generated group is virtually free if and only if any of its locally finite Cayley graphs has a terminal factorisation of only finite graphs.\qed
\end{coro}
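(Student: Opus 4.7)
The plan is to derive this corollary as an immediate chaining of two equivalences: Woess's characterisation of virtually free groups in terms of thinness of ends of Cayley graphs, and Theorem~\ref{thin ends main}, which characterises connected \qt\ locally finite graphs with only thin ends as precisely those admitting a terminal factorisation into finite factors. Since any locally finite Cayley graph $G$ of a finitely generated group $\Gamma$ is transitive (hence \qt) and locally finite, both results apply directly to~$G$.

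First I would fix an arbitrary finitely generated group $\Gamma$ and note that Woess's theorem from~\cite{tree-like} gives the equivalence: $\Gamma$ is virtually free if and only if \emph{every} end of any (equivalently, some) locally finite Cayley graph of~$\Gamma$ is thin. Next I would apply Theorem~\ref{thin ends main} to such a Cayley graph~$G$: this states that $G$ has only thin ends if and only if $G$ has a terminal factorisation consisting entirely of finite graphs. Concatenating these two equivalences yields the desired statement, with the quantifier ``any of its locally finite Cayley graphs'' being justified by the fact that the thin-ends property (and hence, via Theorem~\ref{thin ends main}, the existence of a finite terminal factorisation) is shared by every locally finite Cayley graph of a fixed finitely generated group, as asserted in Woess's theorem.

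There is no real obstacle here since all the work is packaged into the two cited results; the only subtlety worth spelling out is that the ``any'' in the corollary is an \emph{every}, and that this is legitimate because Woess's statement is itself phrased for every locally finite Cayley graph. Hence the corollary follows immediately with no additional argument required.
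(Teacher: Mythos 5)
Your argument matches the paper's proof exactly: the corollary is stated as a direct consequence of Woess's theorem (virtually free $\Leftrightarrow$ every end of any locally finite Cayley graph is thin) combined with Theorem~\ref{thin ends main}. No differences in approach.
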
 

In~\cite{H-QuasiIsometriesAndTA} the interplay between \ta s and quasi-isometries is investigated further and the results of this section are extended to graphs other than trees in two ways.
First, it is shown that the quasi-isometry type of (iterated) \ta s only depend on the quasi-isometry types of the infinite factors.
Then, in the case of accessible infinitely-ended graphs, it is shown that the quasi-isometry types of the graphs determine the quasi-isometry types of the infinite factors in any of its terminal factorisations.

\subsection{Planar graphs}\label{Sec_planar}

Mohar, see~\cite{Mohar06}, raised the question whether \ta s are powerful enough to characterise planar transitive locally finite graphs in terms of finite or one-ended locally finite planar transitive graphs.
The aim of this section is to answer his question in the affirmative in case of planar \qt\ graphs.

Dunwoody~\cite{D-PlanarGraphsAndCovers} proved that planar \qt\ locally finite graphs are edge-accessible, see also~\cite{H-PlanarTransitivity}.
This allows us to apply Theorem~\ref{thm_accessmain} to these graphs.
We directly obtain the following result.

\begin{theorem}
For every planar connected \qt\ locally finite graph $G$ there are finitely many planar connected \qt\ locally finite graphs $G_1,\ldots,G_n$ with at most one end such that $G$ can be obtained by finitely many (iterated) \ta s of $G_1,\ldots,G_n$.\qed
\end{theorem}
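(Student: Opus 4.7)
The plan is to combine Dunwoody's edge-accessibility theorem for planar \qt\ locally finite graphs with the accessibility characterisation established in Theorem~\ref{thm_accessmain}. By Dunwoody~\cite{D-PlanarGraphsAndCovers}, the graph $G$ is edge-accessible, and Theorem~\ref{thm_accessmain} then yields that $G$ is accessible. Unpacking the definition of accessibility (as recorded in Remark~\ref{rem_access2}), this means precisely that $G$ admits a terminal factorisation into finitely many connected \qt\ locally finite graphs $G_1,\ldots,G_n$, each of which is either finite or one-ended, and that $G$ can be recovered from $G_1,\ldots,G_n$ by finitely many iterated \ta s of finite adhesion.

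The only additional point to check is that each $G_i$ is planar. Each individual splitting step in a process of splittings arises, via Lemma~\ref{TD_implies_TA}, from a connected basic \td\ of some intermediate factor $H$ and produces two new factors of the form $H[V_t]$ and $H[V_{t'}]$, which are induced subgraphs of $H$. A straightforward induction on the depth of the splitting process then shows that each terminal factor $G_i$ is (isomorphic to) an induced subgraph of $G$, and induced subgraphs of planar graphs are planar.

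I do not expect any real obstacle here: connectedness, quasi-transitivity and local finiteness of the $G_i$ are guaranteed by Proposition~\ref{quasitransitivebags} and the very definition of a splitting, while planarity is inherited through the induced-subgraph observation above. The only substantial external input beyond the machinery developed in the preceding sections is Dunwoody's edge-accessibility theorem for planar \qt\ locally finite graphs; once that is cited, the statement follows as a direct corollary of Theorem~\ref{thm_accessmain} and the definition of accessibility.
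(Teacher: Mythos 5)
Your proposal is correct and takes essentially the same route as the paper: cite Dunwoody's edge-accessibility result for planar \qt\ locally finite graphs, then invoke Theorem~\ref{thm_accessmain} to obtain accessibility and hence a terminal factorisation into finitely many connected \qt\ locally finite factors with at most one end. The paper states this as a direct corollary; you additionally spell out why the terminal factors are planar (each splitting step via Lemma~\ref{TD_implies_TA} yields factors $G[V_t]$, $G[V_{t'}]$ that are induced subgraphs, so by induction every terminal factor is an induced subgraph of $G$), which is a point the paper leaves implicit but is worth noting.
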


\subsection{Hyperbolic graphs}\label{Sec_hyperbolic}

For our last application, we look at hyperbolic graphs.
The aim is to give a characterisation of \qt\ locally finite hyperbolic graphs in terms of their terminal factorisations.
But before we state the main theorem of this section, we need some definitions and preliminary results.

Let $\delta\geq 0$ and let $G$ be a graph.
Then $G$ is \emph{$\delta$-hyperbolic} if for all vertices $x_1,x_2,x_3\in V(G)$ and all geodesics $P_{i,j}$ between $x_i$ and $x_j$, every vertex of the path $P_{1,2}$ has distance at most $\delta$ to a vertex on $P_{2,3}\cup P_{1,3}$.
We call $G$ \emph{hyperbolic} if it is $\delta$-hyperbolic for some $\delta\geq 0$.

Let $\gamma\geq 1$ and $c\geq 0$.
A finite walk is called a \emph{$(\gamma,c)$-\qg} if it is the image of a $(\gamma,c)$-quasi-isometry of some geodesic with the same end vertices.

\begin{lemma}\label{lem_geodQuasigeod}
Let $G_1, G_2$ be connected locally finite graphs and let $G=G_1\ast G_2$ be a \ta\ of~$G_1$ and~$G_2$ such that the adhesion sets in~$G_1$ have bounded diameter in~$G_1$.
Then there are some $\gamma\geq 1$, $c\geq 0$ such that every geodesic in~$G_1$ is a $(\gamma,c)$-\qg\ in~$G$.
\end{lemma}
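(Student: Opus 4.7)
The plan is to show that every geodesic $P = p_0, p_1, \ldots, p_m$ in~$G_1$, viewed inside~$G$ via the embedding of~$G_1$ as a copy~$G_t$, is a $(D,0)$-\qg\ in~$G$, where $D$ is a uniform upper bound on the $G_1$-diameter of the adhesion sets of~$G_1$. The upper estimate $d_G(p_i,p_j) \leq d_{G_1}(p_i,p_j) = |i-j|$ is immediate from $G_t \subseteq G$, so the only task is the lower bound, equivalently $d_{G_1}(p_i,p_j) \leq D\cdot d_G(p_i,p_j)$ for all $i,j$. As set-up, recall from Remark~\ref{TA_implies_TD} that the \ta\ $G = G_1 \ast_T G_2$ comes with an induced \td\ $(T, (V_s)_{s\in V(T)})$; fix $t \in V(T)$ so that $V_t$ is the copy of~$V(G_1)$ containing~$P$, and note that by Remark~\ref{rem_adhSeps}, for every neighbour~$t'$ of~$t$ in~$T$, the adhesion set $V_t \cap V_{t'}$ separates $V_t$ from the vertices of~$G$ lying on the $t'$-side of~$tt'$.

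The key structural observation is the following. For any walk $Q = q_0, q_1, \ldots, q_n$ in~$G$ between two vertices of~$V_t$, decompose~$Q$ into \emph{inside edges} (both endpoints in~$V_t$) and \emph{excursions}, namely maximal subwalks $q_a, \ldots, q_b$ with $q_a, q_b \in V_t$ and $q_{a+1}, \ldots, q_{b-1} \notin V_t$. I claim that the two endpoints of every excursion lie in a common adhesion set $V_t \cap V_{t'}$. Indeed, the step from~$q_a$ to $q_{a+1} \notin V_t$ places $q_{a+1}$ on the $t'$-side of some unique neighbour~$t'$ of~$t$, which by the separating property above forces $q_a \in V_t \cap V_{t'}$; moreover the walk cannot cross to a different side of~$t$ without first re-entering~$V_t$, so all intermediate vertices remain on the $t'$-side and hence $q_b \in V_t \cap V_{t'}$ as well.

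Now take $Q$ to be a geodesic in~$G$ from $p_i$ to $p_j$, of length $n = d_G(p_i,p_j)$. Build a walk $\tilde Q$ in~$G_1$ from $p_i$ to $p_j$ by keeping each inside edge of~$Q$ that is already an edge of~$G_t$, replacing each inside edge not in~$G_t$ (whose two endpoints must both lie in some adhesion set $V_t \cap V_s$) by a $G_1$-geodesic of length at most~$D$, and replacing each excursion by a $G_1$-geodesic between its two endpoints, again of length at most~$D$. Since every excursion contains at least two edges of~$Q$, a short count gives $|\tilde Q| \leq D\cdot n$, and hence $d_{G_1}(p_i,p_j) \leq D\cdot d_G(p_i,p_j)$, as required. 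The main obstacle in this plan is the structural claim about excursions; once the induced \td\ is set up and Remark~\ref{rem_adhSeps} is invoked, it reduces to tracking which side of the edge~$tt'$ each intermediate vertex of~$Q$ lies on.
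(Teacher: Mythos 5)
Your proof is correct and takes essentially the same route as the paper: fix the copy $G_t$ of $G_1$, take a $G$-geodesic between the endpoints of a $G_1$-geodesic, use the separation property of the induced \td\ to show that each excursion outside $V_t$ starts and ends in a common adhesion set, replace each excursion by a $G_1$-path of length at most the adhesion diameter, and deduce a multiplicative bound $d_{G_1}(p_i,p_j)\le D\cdot d_G(p_i,p_j)$, hence a $(\gamma,0)$-quasi-geodesic. You are in fact a touch more careful than the paper in explicitly handling edges of $G$ joining two vertices of $V_t$ that are not edges of the copy $G_t$ (a case the paper's replacement step passes over silently, affecting only the constant).
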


\begin{proof}
Let $\gamma'$ be the maximum distance in~$G_1$ between vertices of an adhesion set.
If $\gamma'=1$, then it is straightforward to see that every geodesic in~$G_1$ is a geodesic in~$G$.
So we may assume $\gamma'\geq 2$.
Let $P_1$ be a geodesic in~$G_1$ and let $P$ be a geodesic in~$G$ with the same end vertices as~$P_1$.
Whenever $P$ contains a vertex outside of~$G_1$, it must have left $G_1$ through an adhesion set and reentered through the same adhesion set.
(Note that we consider the \td\ defined by the \ta\ $G_1\ast G_2$ as in Remark~\ref{TA_implies_TD}.)
We replace every maximal subpath of~$P$ all whose inner vertices lie outside of~$G_1$ by a path in~$G_1$ with the same end vertices.
As these end vertices lie in a common adhesion set, the length of the replacement path is at most~$\gamma'$ and it replaces a path of length at least~$2$.
We end up with a walk in~$G_1$ that has the same end vertices as~$P$ and that is at most $\gamma'/2$ times as long as~$P$.
Since $P_1$ is shorter than that walk, its length is at most $\gamma'/2$ times as long as~$P$.
As the same applies for every subpath of~$P_1$, we conclude that $P_1$ is a $(\gamma'/2,0)$-\qg\ in~$G$.
\end{proof}

The reason that we are interested in \qg s is because they still lie relatively close to geodesics in hyperbolic graphs as the following lemma shows.

\begin{lemma}{\rm \cite[Th\'eor\`eme 3.1.4]{CDP}}\label{lem_QuasigeodHyper}
Let $G$ be a locally finite $\delta$-hyperbolic graph. For all $\gamma\geq 1$ and $c\geq 0$, there is a constant $\kappa = \kappa(\delta,\gamma,c)$ such that for every two vertices $x,y$ of~$G$ every $(\gamma,c)$-quasi-geodesic between them lies in a $\kappa$-neigh\-bour\-hood around every geodesic between $x$ and $y$ and vice versa.\qed
\end{lemma}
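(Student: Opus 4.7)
The statement is the Morse (stability) lemma for quasi-geodesics in $\delta$-hyperbolic spaces. My plan is to prove it directly in the discrete graph setting via the two classical steps: first that any \qg\ lies in a bounded neighbourhood of a geodesic with the same endpoints, and then conversely. Let $\alpha = v_0,\ldots,v_n$ be a $(\gamma,c)$-\qg\ from $x$ to $y$ and let $\beta$ be a geodesic between them. Note that consecutive vertices of~$\alpha$ are at distance at most $\gamma + c$, and the index-distance between $v_i,v_j$ satisfies $|i-j| \leq \gamma(d(v_i,v_j)+c)$.

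The core step is to bound $D := \max_i d(v_i, \beta)$ by a constant $\kappa_0 = \kappa_0(\delta,\gamma,c)$. Pick $v^*$ realising~$D$ and, moving along~$\alpha$ in either direction from~$v^*$, let $v_j$ and $v_k$ (with $j < i^* < k$) be the last vertices on each side whose distance from~$\beta$ drops to $\leq D/2$; all vertices strictly between them then lie in the open annulus around~$\beta$ of inner radius $D/2$ and outer radius $D$. Taking nearest-point projections $v_j',v_k' \in \beta$ and forming the geodesic quadrilateral with corners $v_j, v_j', v_k', v_k$, the $2\delta$-thinness of quadrilaterals in $\delta$-hyperbolic spaces, together with the upper bound $D/2$ on the length of the two ``short'' sides of $Q$, forces every point of the side $[v_j,v_k]$ that is far from $\beta$ to stay within $2\delta$ of a short side. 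On the other hand, the \qg\ property caps $k-j$ linearly in $d(v_j,v_k)$ (itself bounded by $D + d_\beta(v_j',v_k')$), while iterating the thin-quadrilateral argument along the subpath $v_j,\ldots,v_k$ (which avoids the open $D/2$-neighbourhood of~$\beta$) produces an exponential-in-$D$ lower bound on $k-j$. These two bounds are incompatible unless $D \leq \kappa_0$.

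For the reverse inclusion, fix $q \in \beta$. The nearest-point projections of $v_0,v_1,\ldots,v_n$ to~$\beta$ form a sequence on $\beta$ starting at~$x$, ending at~$y$, and with consecutive members differing by at most $2\kappa_0 + (\gamma+c)$ along~$\beta$ (since each $v_i$ lies within $\kappa_0$ of~$\beta$ and consecutive $v_i$ differ by at most $\gamma+c$). Hence some projection $v_i'$ satisfies $d_\beta(q,v_i') \leq \kappa_0 + \tfrac{1}{2}(\gamma+c)$, and therefore $d(q, v_i) \leq d(q, v_i') + d(v_i',v_i) \leq 2\kappa_0 + \tfrac{1}{2}(\gamma+c)$. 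Taking $\kappa$ to be the larger of the two bounds $\kappa_0$ and $2\kappa_0 + \tfrac{1}{2}(\gamma+c)$ finishes the proof.

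The main obstacle is the exponential-divergence step that bounds~$D$: this is where $\delta$-hyperbolicity is genuinely used, and is the combinatorial heart of CDP's Th\'eor\`eme~3.1.4. A fully honest write-up needs some care in setting up the iteration so that each application of $2\delta$-thinness effectively cuts $D$ in half; the alternative is to tame $\alpha$ to a continuous path and argue along the continuous lines of Bridson--Haefliger's proof of the Morse lemma, which is what CDP in effect do.
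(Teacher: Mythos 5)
Note first that the paper does not prove this lemma: it is attributed to Coornaert--Delzant--Papadopoulos~\cite[Th\'eor\`eme~3.1.4]{CDP}, and the tombstone at the end of the statement signals that it is quoted as a known black box (the Morse/stability lemma for quasi-geodesics). There is therefore no in-paper argument to compare yours against; the paper simply invokes the result and moves on.

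As for your sketch, the overall shape is right and the reverse-inclusion half (projecting the vertices of the quasi-geodesic onto $\beta$ and observing the projections step by at most $2\kappa_0+(\gamma+c)$) is fine. But beyond the gap you acknowledge --- the exponential-divergence estimate (BH III.H.1.6, or CDP's Lemme~3.1.2), which is genuinely separate work --- there is a second, unflagged gap in the first half. You pick $v^*$ on the \emph{quasi-geodesic} at maximal distance $D$ from $\beta$ and flank it by $v_j,v_k$, the nearest vertices on either side whose distance to $\beta$ drops to $D/2$. Your ``linear'' upper bound then reads $k-j\le\gamma\bigl(D+d_\beta(v_j',v_k')+c\bigr)$, which involves $d_\beta(v_j',v_k')$, the distance along $\beta$ between the two footpoints, and nothing in your construction controls this quantity in terms of $D$: the subpath $v_j,\dots,v_k$ may travel an arbitrarily long way parallel to $\beta$ inside the annulus of radii $(D/2,D]$, in which case both the supposed upper bound and the exponential lower bound on $k-j$ grow together and no contradiction results. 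The standard fix is to choose the extremal point $x_0$ on the \emph{geodesic} $\beta$ at maximal distance $D_0$ from the quasi-geodesic, then move exactly $2D_0$ to either side along $\beta$ to define the flanking points, so that the along-$\beta$ distance is $4D_0$ by fiat; that is what makes the linear-versus-exponential comparison close. If you intend to write this out in full rather than cite CDP, switch the extremal choice to the geodesic side before invoking exponential divergence.
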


\begin{lemma}\label{lem_GeodGVt}
Let $(T,\Vcal)$ be a connected basic \td\ of a connected locally finite graph~$G$ such that every part induces a hyperbolic graph.
Let $x,y\in V_t$ for some $t\in V(T)$.
Then there exists $\lambda=\lambda(\delta,\gamma)$ such that every geodesic in~$G$ between $x$ and~$y$ lies in a $\lambda$-neighbourhood of every geodesic in $G[V_t]$ between $x$ and~$y$ and vice versa, where $\gamma$ is the diameter of any adhesion set.
\end{lemma}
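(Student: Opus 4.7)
The plan is to fix arbitrary geodesics $P$ in~$G$ and $P_t$ in~$G[V_t]$, both from~$x$ to~$y$, and to construct an auxiliary walk~$P'$ in~$G[V_t]$ that on the one hand stays close to~$P$ in~$G$ and on the other hand is a quasi-geodesic in~$G[V_t]$ with constants depending only on~$\gamma$. Once this is done, Lemma~\ref{lem_QuasigeodHyper} applied inside the $\delta$-hyperbolic graph~$G[V_t]$ will compare $P'$ with~$P_t$, and the two comparisons will combine to give the claimed bound.

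The first step is to bound the length of each maximal subpath of~$P$ whose inner vertices lie outside~$V_t$. By Remark~\ref{rem_adhSeps}, the endpoints of such an \emph{excursion} lie in a common adhesion set of~$(T,\Vcal)$, which has $G$-diameter at most~$\gamma$; since the excursion is itself a geodesic in~$G$, its length is at most~$\gamma$. Writing $x=v_0,v_1,\ldots,v_k=y$ for the vertices of~$P$ that lie in~$V_t$, in order along~$P$, I replace each excursion from~$v_i$ to~$v_{i+1}$ by a geodesic $Q_i$ in~$G[V_t]$ from~$v_i$ to~$v_{i+1}$; since $v_i$ and~$v_{i+1}$ belong to a common adhesion set, whose $G[V_t]$-diameter is again at most~$\gamma$, we have $|Q_i|\leq\gamma$. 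Concatenating the $Q_i$ with the edges of~$P$ between consecutive $v_i$'s that are already adjacent on~$P$ yields a walk~$P'$ in~$G[V_t]$ from~$x$ to~$y$.

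The core step is to verify that $P'$ is a $(\gamma,c)$-quasi-geodesic in~$G[V_t]$ for some $c=c(\gamma)$. Since each transition from~$v_i$ to~$v_{i+1}$ contributes at least~$1$ to the length of~$P$ and at most~$\gamma$ to the length of~$P'$, the bound $|P'_{[v_i,v_j]}|\leq\gamma(j-i)\leq\gamma\,d_G(v_i,v_j)\leq\gamma\,d_{G[V_t]}(v_i,v_j)$ is immediate for any two of the~$v_i$. For a general pair $u,w\in V(P')$, each of $u$ and~$w$ lies within $G[V_t]$-distance at most~$\gamma$ of the nearest $v_i$ along~$P'$, so absorbing these boundary corrections yields an estimate of the form $|P'_{[u,w]}|\leq\gamma\,d_{G[V_t]}(u,w)+c(\gamma)$. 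This subpath bookkeeping—in particular when $u$ or~$w$ lies strictly inside one of the replacement geodesics~$Q_i$—is the only technical point and constitutes the main obstacle of the proof.

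With the quasi-geodesic property in hand, Lemma~\ref{lem_QuasigeodHyper} applied inside the $\delta$-hyperbolic graph~$G[V_t]$ produces $\kappa=\kappa(\delta,\gamma)$ such that $P'$ and~$P_t$ lie in each other's $\kappa$-neighbourhoods in~$G[V_t]$, and hence also in~$G$. To finish I will observe that $P$ and~$P'$ are within Hausdorff distance at most~$\gamma$ in~$G$: every vertex of~$P$ either equals some $v_i\in V(P')$ or lies on an excursion of length at most~$\gamma$ and is therefore within $G$-distance~$\gamma$ of some~$v_i$, and conversely every vertex of~$P'$ either equals some $v_i\in V(P)$ or lies on some~$Q_i$ and is within $G$-distance~$\gamma$ of some~$v_i$. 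Two applications of the triangle inequality then yield the lemma with $\lambda:=\kappa+\gamma$, a constant depending only on~$\delta$ and~$\gamma$.
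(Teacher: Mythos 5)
Your proposal is correct and follows essentially the same route as the paper: replace the excursions of the $G$-geodesic $P$ outside $V_t$ by geodesics in $G[V_t]$ through the common adhesion sets, show the resulting walk is a quasi-geodesic in $G[V_t]$ with constants depending only on $\gamma$, invoke Lemma~\ref{lem_QuasigeodHyper} in $G[V_t]$ to compare it with $P_t$, and absorb the Hausdorff distance $\leq\gamma$ between $P$ and the modified walk. The bookkeeping you single out as the main obstacle is exactly the computation the paper carries out (with the minor refinement that each excursion has length $\geq 2$, giving a multiplicative constant $\gamma/2$ rather than $\gamma$, which does not matter for the conclusion).
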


\begin{proof}
Let $P, P'$ be a geodesic in~$G$, in~$G[V_t]$ between $x$ and~$y$, respectively.
We modify $P$ the same way we did it in the proof of Lemma~\ref{lem_geodQuasigeod}:
we replace every maximal subpath of~$P$ all whose inner vertices lie outside of~$G[V_t]$ by a shortest path in~$G[V_t]$ with the same end vertices.
This is possible as whenever $P$ contains a vertex outside of~$G[V_t]$ it must have left $G[V_t]$ through an adhesion set and reentered through the same adhesion set.
Let $P''$ be the walk obtained from~$P$ after these replacements.
If $\gamma=1$, then $P=P''$ and if $\gamma\neq 1$, then the length of~$P''$ is at most $\gamma/2$ times the length of~$P$ as paths of length at least~$2$ got replaced by paths of length at most~$\gamma$.
Set $\gamma':=\max\{1,\gamma/2\}$.
Now let $a,b$ be vertices of~$P''$.
Then there are vertices $a',b'$ on~$P\cap P''$ of distance at most $\gamma/2$ to~$a$, to~$b$, respectively.
Hence, we have
\[
d_{P''}(a,b)\leq \gamma+d_{P''}(a',b')\leq \gamma+\gamma'd_G(a',b')\leq (1+\gamma')\gamma+\gamma'd_G(a,b).
\]
So $P''$ is $(\gamma',(1+\gamma')\gamma)$-\qg.
Applying Lemma~\ref{lem_QuasigeodHyper}, we find $\kappa$ depending only on $\delta$ and~$\gamma$ such that $P''$ lies in a $\kappa$-neighbourhood of~$P'$ and vice versa.
Since $P$ lies in a $\gamma/2$-neighbourhood of~$P''$ and vice versa, we have shown the assertion.
\end{proof}

The following theorem is our main result for \qt\ locally finite hyperbolic graphs: it shows that \ta s behave well with respect to hyperbolicity.

\begin{thm}\label{thm_hyperb}
Let $G_1$ and $G_2$ be connected locally finite graphs and let $G$ be a \ta\ $G_1\ast G_2$ such that the adhesion sets in ~$G_i$ have bounded diameter in~$G_i$ for $i=1,2$.
Then $G$ is hyperbolic if and only if $G_1$ and~$G_2$ are hyperbolic.
\end{thm}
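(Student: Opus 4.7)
The tree amalgamation $G = G_1 \ast_T G_2$ gives rise via Remark~\ref{TA_implies_TD} to a tree-decomposition $(T,\Vcal)$ of $G$ whose parts are copies of $G_1$ or $G_2$ and whose adhesion sets have diameter at most $\gamma$ in their respective parts, where $\gamma$ is a common bound on adhesion diameters in $G_1$ and $G_2$. Both directions of the proof will proceed by analysing $G$-geodesics through this decomposition.

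For the easy direction, assume $G$ is $\delta_G$-hyperbolic and fix a copy $V_t \cong G_i$ in $G$. Given $x,y,z \in V_t$, let $P_{xy}, P_{yz}, P_{xz}$ be $V_t$-geodesics between them. By Lemma~\ref{lem_geodQuasigeod} these are uniform quasi-geodesics in $G$, so Lemma~\ref{lem_QuasigeodHyper} combined with $\delta_G$-thinness of $G$-geodesic triangles yields a constant $D$ such that every $v \in P_{xy}$ lies within $G$-distance $D$ of some $u \in P_{yz} \cup P_{xz}$. To translate this bound to a $V_t$-distance, apply the excursion-replacement trick used in the proof of Lemma~\ref{lem_geodQuasigeod}: any $G$-path between two vertices of $V_t$ can be converted into a $V_t$-walk by replacing each excursion out of $V_t$ (which by Remark~\ref{rem_adhSeps} re-enters through the same adhesion set) with a detour of length at most $\gamma$ inside $V_t$. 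This gives $d_{V_t}(v,u) \leq (1 + \gamma/2) D$ and hence hyperbolicity of $V_t \cong G_i$.

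For the hard direction, assume both $G_1, G_2$ are $\delta$-hyperbolic so that every part of $(T,\Vcal)$ is $\delta$-hyperbolic, and let $\lambda = \lambda(\delta,\gamma)$ be the constant from Lemma~\ref{lem_GeodGVt} (whose proof uses only the TD structure, bounded adhesion diameter and hyperbolicity of parts, not the ``basic'' hypothesis). Take $x,y,z \in V(G)$ in parts $V_{t_x}, V_{t_y}, V_{t_z}$, let $m$ be the median of $t_x,t_y,t_z$ in $T$, let $[xy], [yz], [xz]$ be $G$-geodesics, and fix $v \in [xy]$ lying in a part $V_{t_v}$. If $t_v$ lies (weakly) between $t_x$ and $m$ on the tree path of $[xy]$, then $t_v$ also lies on the tree path of $[xz]$, and the $V_{t_v}$-portions of $[xy]$ and $[xz]$ enter and leave $V_{t_v}$ through the same two adhesion sets. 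By Lemma~\ref{lem_GeodGVt} these portions are $\lambda$-close in $G$ to $V_{t_v}$-geodesics whose endpoints pairwise differ by at most $\gamma$, and hyperbolicity of $V_{t_v}$ forces two such $V_{t_v}$-geodesics to stay within a uniform constant of each other; hence $v$ is at uniform $G$-distance from $[xz]$. The case $t_v$ between $m$ and $t_y$ is symmetric with $[xz]$ replaced by $[yz]$.

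Finally, if $t_v = m$, then all three $G$-geodesics have $V_m$-portions whose entry/exit adhesion sets on the $x$-, $y$- and $z$-sides of $V_m$ are pairwise shared; picking one vertex in each of these three adhesion sets yields a $\delta$-slim triangle in $V_m$ which, via Lemma~\ref{lem_GeodGVt} and the $\gamma$-bound on endpoint offsets, uniformly bounds $d_G(v, [yz] \cup [xz])$. Since the bound depends only on $\delta, \gamma, \lambda$, this proves $G$ is hyperbolic. The main obstacle is the hard direction and specifically the bookkeeping of approximations across several layers: a $G$-geodesic portion inside a part is only $\lambda$-approximated by a true part-geodesic, and endpoints of comparable portions only match up to the adhesion diameter $\gamma$; the role of Lemma~\ref{lem_GeodGVt} together with hyperbolicity of parts is precisely to keep these errors uniform, and the median case is the most delicate since all three sides meet there at once.
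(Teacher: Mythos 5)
Your proof is correct and takes essentially the same route as the paper's: the forward direction combines Lemma~\ref{lem_geodQuasigeod} and Lemma~\ref{lem_QuasigeodHyper} with the excursion-replacement trick, and the backward direction passes to the tree-decomposition from Remark~\ref{TA_implies_TD}, locates each vertex of a geodesic in a part on the tree path, uses Lemma~\ref{lem_GeodGVt} to compare $G$-geodesics with part-geodesics, and splits into the off-median case (two geodesics share both adhesion sets bounding the part) and the median case (all three geodesics meet in one part). Your ``$t_v$ weakly between $t_x$ and $m$'' cases correspond exactly to the paper's ``$P_{12}$ passes through $V_t$ in parallel to $P_{13}$'' (resp.\ $P_{23}$), and your median triangle argument compresses the paper's six-point bookkeeping with $u_i,v_i,w_i$; your parenthetical remark that Lemma~\ref{lem_GeodGVt}'s proof does not actually use the ``basic'' hypothesis is a worthwhile clarification that the paper leaves implicit. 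The only place where you are a little less careful than the paper is in the selection of the node $t_v$: a vertex $v$ on $[xy]$ may sit in a part hanging off the tree path, and the paper explicitly picks the nearest node on $T_{12}$ to handle that; your phrasing ``lying in a part $V_{t_v}$'' with $t_v$ on the tree path glosses over this, but the fix is exactly the paper's and does not affect the architecture of the argument.
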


\begin{proof}
First, let $G$ be $\delta$-hyperbolic.
Let $x_1,x_2,x_3\in V(G_1)$ and let $P_{ij}$ be a geodesic in~$G_1$ and $P'_{ij}$ be a geodesic in~$G$ between $x_i$ and~$x_j$ for all $i\neq j$.
Let $x\in P_{12}$.
By Lemma~\ref{lem_geodQuasigeod} and its proof, $P_{12}$ is $(\gamma,0)$-\qg\ for $\gamma=\max\{1,\beta/2\}$, where $\beta$ is	 the maximum distance in~$G_1$ between two vertices in a common adhesion set in~$G_1$.
By Lemma~\ref{lem_QuasigeodHyper}, there is some $x'\in P'_{12}$ of distance at most~$\kappa$ to~$x$ for some $\kappa\geq 0$.
Since $G$ is $\delta$-hyperbolic, we find $y'\in P'_{13}\cup P'_{23}$ of distance at most~$\delta$ to~$x'$.
Again by Lemma~\ref{lem_QuasigeodHyper}, we find $y\in P_{13}\cup P_{23}$ of distance at most~$\kappa$ to~$y'$.
Hence, we have $d_G(x,y)\leq 2\kappa+\delta$.
Lemma~\ref{lem_geodQuasigeod} implies $d_{G_1}(x,y)\leq \gamma(2\kappa+\delta)$.
Thus, $G_1$ is $\gamma(2\kappa+\delta)$-hyperbolic.
Analogously, $G_2$ is $\gamma(2\kappa+\delta)$-hyperbolic.

Now let $G_1$ and~$G_2$ be hyperbolic.
Then there is some $\delta\geq 0$ such that $G_1$ and~$G_2$ are $\delta$-hyperbolic.
Let $\gamma_i$ be the maximum distance between vertices in a common adhesion set in~$G_i$ for $i=1,2$ and let $\gamma:=\max\{\gamma_1,\gamma_2\}$.
We consider the canonical \td\ $(T,\Vcal)$ as discussed in Remark~\ref{TA_implies_TD}.
Note that the parts of $(T,\Vcal)$ induce graphs that are isomorphic to either $G_1$ or~$G_2$, so they are hyperbolic.
Let $x_1,x_2,x_3\in V(G)$ and let $P_{ij}$ be a geodesic between $x_i$ and~$x_j$.
Let $t_1,t_2\in V(T)$ of minimum distance to each other with $x_i\in V_{t_i}$ for $i=1,2$ and let $T_{12}$ be the $t_1$-$t_2$ path in~$T$.
Note that every node on~$T_{12}$ and every adhesion set $V_t\cap V_{t'}$ with $tt'\in E(T_{12})$ contains a vertex of~$P_{12}$ and also of $P_{13}\cup P_{23}$.

Let $x\in P_{12}$.
Let $t'\in V(T)$ closest to~$T_{12}$ with $x\in V_{t'}$ and let $t\in T_{12}$ closest to~$t'$.
We say \emph{$P_{12}$ passes through $V_t$ in parallel to~$P_{13}$} either if $t=t_1$ and $P_{13}$ contains a vertex of the adhesion set $V_t\cap V_{t'_2}$, where $t'_2$ is the neighbour of~$t$ on~$T_{12}$, or if $t$ is neither $t_1$ nor $t_2$ and both $V_t\cap V_{t'_1}$ and $V_t\cap V_{t'_2}$ contain vertices of~$P_{13}$, where $t'_i$ is the neighbour of~$t$ on~$T_{12}$ closest to~$t_i$ for $i=1,2$.
Analogously, \emph{$P_{12}$ passes through $V_t$ in parallel to~$P_{23}$} either if $t=t_2$ and $P_{23}$ contains a vertex of the adhesion set $V_t\cap V_{t'_1}$, where $t'_1$ is the neighbour of~$t$ on~$T_{12}$, or if $t$ is neither $t_1$ nor $t_2$ and both $V_t\cap V_{t'_1}$ and $V_t\cap V_{t'_2}$ contain vertices of~$P_{23}$, where $t'_i$ is the neighbour of~$t$ on~$T_{12}$ closest to~$t_i$ for $i=1,2$.

First, let us assume that $P_{12}$ passes through $V_t$ in parallel to~$P_{13}$.
If $t=t_1$, let $u_1:=v_1:=x_1$, let $u_2$ be the last vertex on~$P_{12}$ in $V_t\cap V_{t'_2}$, and let $v_2$ be a vertex on~$P_{13}$ in $V_t\cap V_{t'_2}$.
If $t\neq t_1$, let $u_1$ be the first vertex on~$P_{12}$ in $V_t\cap V_{t'_1}$ and $u_2$ be the last vertex on~$P_{12}$ in $V_t\cap V_{t'_2}$ and let $v_1, v_2$ be on~$P_{13}$ in $V_t\cap V_{t'_1}$, in $V_t\cap V_{t'_2}$, respectively.
Note that by the choice of $u_1$ and~$u_2$, the vertex~$x$ lies between $u_1$ and~$u_2$ on~$P_{12}$.
Let $P$ be a geodesic in~$V_t$ between $u_1$ and~$u_2$ and let $P'$ be a geodesic in~$V_t$ between $v_1$ and~$v_2$.
Let $Q_i$ be a geodesic in~$V_t$ between $u_i$ and~$v_i$ for $i=1,2$.
Looking at a $v_1,v_2,u_2$, we conclude by $\delta$-hyperbolicity that any geodesic between $v_1$ and~$v_2$ lies in a $\delta$-neighbourhood of $Q_2\cup P'$, so $P$ lies in a $2\delta$-neighbourhood of $Q_1\cup P'\cup Q_2$.
As the lengths of~$Q_1$ and of~$Q_2$ are bounded by~$\gamma$, we conclude that $P$ lies in a $(2\delta+\gamma)$-neighbourhood of~$P'$.
Lemma~\ref{lem_GeodGVt} implies the existence of some $\lambda$ such that $P$ contains a vertex $y_1$ of distance at most $\lambda$ from~$x$.
We just showed that $P'$ contains a vertex~$y_2$ of distance at most~$2\delta+\gamma$ from~$y_1$ and Lemma~\ref{lem_GeodGVt} shows the existence of a vertex $y_3$ on~$P_{23}$ with $d(x,y_3)\leq 2\lambda+2\delta+\gamma$.

Analogously, we conclude in the case that $P_{12}$ passes through $V_t$ in parallel to~$P_{23}$ that $P_{23}$ contains a vertex of distance at most $d(x,y_3)\leq 2\lambda+2\delta+\gamma$ from~$x$.

Let us now assume that $P_{12}$ passes through $V_t$ neither in parallel to~$P_{13}$ nor in parallel to~$P_{23}$.
If $t=t_1$, let $u_1:=v_1:=x_1$.
If $t\neq t_1$, let $t'_1$ be the neighbour of~$t$ on~$T_{12}$ closest to~$t_1$ and let $u_1$ be the first vertex on~$P_{12}$ in $V_t\cap V_{t'_1}$ and let $v_1$ be a vertex on~$P_{13}$ in $V_t\cap V_{t'_1}$.
If $t=t_2$, let $u_2:=w_2:=x_1$.
If $t\neq t_2$, let $t'_2$ be the neighbour of~$t$ on~$T_{12}$ closest to~$t_2$ and let $u_2$ be the last vertex on~$P_{12}$ in $V_t\cap V_{t'_2}$ and let $w_2$ be a vertex on~$P_{23}$ in $V_t\cap V_{t'_2}$.
Let $t_3\in V(T)$ of minimum distance to~$t$ such that $x_3\in V_{t_3}$.
If $t=t_3$, let $w_1:=v_2:=x_3$.
If $t\neq t_3$, let $t'_3$ be the neighbour of~$t$ in~$T$ closest to~$t_3$ and let $w_1$ be a vertex on~$P_{13}$ in $V_t\cap V_{t'_3}$ and let $v_2$ be a vertex on~$P_{23}$ in $V_t\cap V_{t'_3}$.
(Note that both $P_{13}$ and $P_{23}$ must pass through the adhesion set $V_t\cap V_{t'_3}$according to the definition of a \td.)

We consider a couple of geodesics in~$G[V_t]$: let $P_u$, $P_v$, $P_w$ be a geodesic in $G[V_t]$ between $u_1,u_2$, between $v_1,v_2$, between $w_1,w_2$, respectively and let $P_{uv}$, $P_{vw}$, $P_{uw}$ be a geodesic in $G[V_t]$ between $u_1$ and~$v_1$, between $v_2$ and~$w_1$, between $u_2$ and~$w_2$, respectively.
Similar to the case that $P_{12}$ passes through $V_t$ in parallel to~$P_{13}$ we conclude that $P_u$ lies in a $4\delta$-neighboudhood of $P_{uv}\cup P_v\cup P_{vw}\cup P_w\cup P_{uw}$ and hence in a $(4\delta+\gamma)$-neighbourhood of $P_v\cup P_w$.

Let $\lambda$ be the value obtained in Lemma~\ref{lem_GeodGVt}.
Then there is a vertex $y_1$ on~$P_u$ of distance at most $\lambda$ from~$x$.
As we just showed, we find $y_2$ on either $P_u$ or~$P_w$ with $d(y_1,y_2)\leq 4\delta+\gamma$ and Lemma~\ref{lem_GeodGVt} then implies the existence of a vertex $y_3$ on either $P_{13}$ or $P_{23}$ with $d(y_2,y_3)\leq\lambda$.
So we have $d(x,y_3)\leq 2\lambda+4\delta+\gamma$.
This proves that $G$ is $(2\lambda+4\delta+\gamma)$-hyperbolic.
\end{proof}

As a corollary of Theorem~\ref{thm_hyperb}, we obtain a characterisation of \qt\ locally finite hyperbolic graphs in terms of their terminal factorisations.

\begin{coro}
A connected \qt\ locally finite graph is hyperbolic if and only if it admits a terminal factorisation such that all its factors are connected \qt\ locally finite hyperbolic graphs with at most one end.
\end{coro}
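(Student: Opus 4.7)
The plan is to prove the two directions separately, in both cases reducing the argument to an iterated application of Theorem~\ref{thm_hyperb}.

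For the forward direction, the first step I would take is to establish that every connected \qt\ locally finite hyperbolic graph~$G$ is edge-accessible; this is what I expect to be the main obstacle. I would derive it from the compactness of the Gromov boundary together with the cocompact action of the stabilising group on it, which yields a uniform bound on the minimum size of a vertex separator between two distinct ends; alternatively one can cite the analogous accessibility result for hyperbolic groups. Once edge-accessibility is in place, Theorem~\ref{thm_accessmain} gives that $G$ is accessible, and hence admits a terminal factorisation consisting of connected \qt\ locally finite graphs with at most one end. To see that each factor is hyperbolic I would induct on the splitting process, following Remark~\ref{rem_access2}: at each step the current factor~$H$ is hyperbolic by the inductive hypothesis and splits as $H=H_1\ast H_2$ of finite adhesion. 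Proposition~\ref{quasitransitivebags} (and the argument in its proof) shows that the adhesion sets fall into finitely many orbits in each~$H_i$ under the appropriate stabiliser; since every adhesion set is finite and $H_i$ is connected and locally finite, their diameters inside~$H_i$ are therefore uniformly bounded. The only-if direction of Theorem~\ref{thm_hyperb} then forces $H_1$ and $H_2$ to be hyperbolic, completing the induction.

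For the backward direction, I would use Remark~\ref{rem_access2}: $G$ is built in finitely many steps from its hyperbolic factors $G_1,\dots,G_n$ via auxiliary graphs $H_1,\dots,H_{n-1}=G$, each of which is a \ta\ $H\ast_{T_i}H'$ of finite adhesion over two earlier graphs from the list. I would induct on~$i$ to show that every $H_i$ is hyperbolic. The base case is given by hypothesis. In the inductive step the bounded-diameter assumption needed in Theorem~\ref{thm_hyperb} again holds by exactly the same finite-adhesion and quasi-transitivity argument as in the forward direction, and the if-direction of Theorem~\ref{thm_hyperb} transports hyperbolicity across the amalgamation. Setting $i=n-1$ yields the hyperbolicity of~$G$.

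The only genuinely non-formal step, and the one on which the argument hinges, is the edge-accessibility of hyperbolic \qt\ locally finite graphs used in the forward direction; once that is established, the remainder is a straightforward induction built on top of Theorem~\ref{thm_hyperb} and the structural description of accessible graphs from Remark~\ref{rem_access2}.
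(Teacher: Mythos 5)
Your proposal follows exactly the same route as the paper: edge-accessibility of~$G$ yields accessibility via Theorem~\ref{thm_accessmain}, and hyperbolicity is then transported through the iterated tree amalgamations of Remark~\ref{rem_access2} by repeated application of Theorem~\ref{thm_hyperb} in each direction. The one step you flag as the main obstacle --- edge-accessibility of connected \qt\ locally finite hyperbolic graphs --- the paper simply cites (\cite[Theorem~4.3]{accessmatthias}), which is precisely the \qt-graph analogue of hyperbolic-group accessibility that you mention as an alternative, so no Gromov-boundary argument needs to be built from scratch; your explicit check that the adhesion sets have bounded diameter inside each factor (via finitely many orbits under the stabiliser) correctly supplies a hypothesis of Theorem~\ref{thm_hyperb} that the paper's proof leaves implicit.
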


\begin{proof}
Let $G$ be a connected \qt\ locally finite graph.
If $G$ is one-ended, then it is a terminal factorisation of itself and the assertion holds trivially.
So let us assume that $G$ has more than one end.

First, let us assume that $G$ is hyperbolic.
By~\cite[Theorem~4.3]{accessmatthias}, it is an edge-accessible graph.
Thus it is accessible and has a terminal factorisation by Theorem~\ref{thm_accessmain}.
So there are connected \qt\ locally finite graphs $G_1,\ldots, G_n$, $H_1,\ldots,H_{n-1}$ with $G=H_{n-1}$ such that each $G_i$ has at most one end and for every $i\leq n-1$, the graph $H_i$ is a \ta\ $H\ast H'$ of finite adhesion, where
\[
H,H'\in\{G_j\mid 1\leq j\leq n\}\cup\{H_j\mid 1\leq j<i\}.
\]
(We may assume that all $G_i$ are indeed needed at some point during these \ta s.)
By repeated application of Lemma~\ref{thm_hyperb}, each $H_i$, and thus each $G_i$ is hyperbolic.

Conversely, if $G$ has a terminal factorisation into connected finite or connected \qt\ locally finite hyperbolic one-ended graphs, then each of the previous factors we considered for obtaining the terminal factorisation are hyperbolic by Lemma~\ref{thm_hyperb}.
In particular, $G$ is hyperbolic.
\end{proof}

\bibliographystyle{plain}
\bibliography{collective}

\end{document}